\documentclass[12pt]{extarticle}
\usepackage{amsmath, amsthm, amssymb, hyperref, color}
\usepackage[shortlabels]{enumitem}
\usepackage{graphicx}
\usepackage[all]{xypic}
\usepackage{makecell}
\usepackage[final]{pdfpages}
\setboolean{@twoside}{false}
\usepackage{pdfpages}
\usepackage{caption}
\usepackage{subcaption}
\usepackage{scalefnt}
\usepackage{verbatim}
\tolerance 10000
\headheight 0in
\headsep 0in
\evensidemargin 0in
\oddsidemargin \evensidemargin
\textwidth 6.5in
\topmargin .25in
\textheight 8.7in

\newtheorem{theorem}{Theorem}
\numberwithin{theorem}{section}
\newtheorem{proposition}[theorem]{Proposition}
\newtheorem{lemma}[theorem]{Lemma}
\newtheorem{corollary}[theorem]{Corollary}

\newtheorem{problem}[theorem]{Problem}
\newtheorem{remark}[theorem]{Remark}
\newtheorem{example}[theorem]{Example}
\newtheorem{conjecture}[theorem]{Conjecture}
\newtheorem{algorithm}[theorem]{Algorithm}

\newcommand{\RR}{\mathbb{R}}

\newcommand{\PP}{\mathbb{P}}
\newcommand{\CC}{\mathbb{C}}
\newcommand{\ZZ}{\mathbb{Z}}
 
\newcommand{ \rrk }{{\mathrm{rk}_{\mathbb{R}}}}
\newcommand{ \crk }{{\mathrm{rk}_{\mathbb{C}}}}
 \date{}
\newcommand{\SSS}{S}
\newcommand{\BQ}{\Omega}
 
\title{\textbf{Real Rank Geometry of Ternary Forms}}

\author{Mateusz Micha{\l}ek, Hyunsuk Moon, \\ 
Bernd Sturmfels, and Emanuele Ventura}

\begin{document}

\maketitle

\begin{abstract} \noindent
We study real ternary forms whose real rank equals the generic complex rank,
and we characterize the semialgebraic set of 
sums of powers representations with that rank.
Complete results are obtained for quadrics and cubics. For quintics we determine the real rank boundary: it is a hypersurface of degree $168$.
For quartics, sextics and septics we identify some of 
the components of the real rank boundary. The real varieties of sums of powers are
stratified by discriminants that are derived from hyperdeterminants.

\end{abstract}

\section{Introduction}

Let $ \RR[x,y,z]_d$ denote the $\binom{d+2}{2}$-dimensional
vector space of ternary forms $f$ of degree $d$. These are homogeneous
polynomials of degree $d$ in three unknowns $x,y,z$, or equivalently,
symmetric tensors of format $3 {\times} 3 {\times} \cdots {\times} 3$ with $d$ factors.
We are interested in decompositions
\begin{equation}
\label{eq:waring}
 f(x,y,z) \,\, = \,\, \sum_{i=1}^r \,\lambda_i \cdot (a_i x + b_i y + c_i z)^d, 
 \end{equation}
where $\,a_i,b_i,c_i, \lambda_i \in \RR\,$ for $\,i=1,2,\ldots,r$.
The smallest $r$ for which such a representation exists is
the {\em real rank} of $f$, denoted $\rrk(f)$. 
For $d$ even, the representation (\ref{eq:waring}) has {\em signature}
$(s,r-s)$, for $s \geq r/2 $, if $s$ of the $\lambda_i$'s are positive
while the others are negative, or vice versa.
The {\em complex rank} $\crk(f)$ is the smallest $r$
such that $f$ has the form
(\ref{eq:waring}) where  $a_i,b_i,c_i \in \CC$.
The inequality  $\crk(f) \leq \rrk(f)$ always holds, and is often strict.
For binary forms,  this phenomenon is well-understood by now, thanks to
 \cite{Ble, CO}.  For ternary forms,
 explicit regions where the inequality is strict
 were identified  by
  Blekherman, Bernardi and Ottaviani  in \cite{BBO}.
 
The present paper extends these studies. We focus on
ternary forms $f$ that are general in $ \RR[x,y,z]_d$.
The complex rank  of such a form is referred to as the
{\em generic rank}. It depends only on $d$,  and we denote it by  $ R(d) $.
The Alexander-Hirschowitz Theorem \cite{AH} implies that
\begin{equation}
\label{eq:genrank}
R(2) = 3,\,\, R(4) = 6, \,\, {\rm and} \quad
R(d) \, = \,\bigg\lceil \frac{ (d+2)(d+1)}{6} \bigg\rceil \,\,\, {\rm otherwise}.
\end{equation}
We are particularly interested in
general forms whose minimal decomposition is real. Set
$$  \mathcal{R}_d \,= \,
\bigl\{\, f \in \RR[x,y,z]_d \,: \,
\rrk(f) = R(d) \bigr\}. $$
This is a full-dimensional semialgebraic subset of $\RR[x,y,z]_d$.
Its {\em topological boundary} $\partial \mathcal{R}_d$ is
the set-theoretic difference of the closure of $\mathcal{R}_d$
minus the interior of the closure of $\mathcal{R}_d$.
Thus, if $ f \in \partial \mathcal{R}_d$
then every open neighborhood of $f$
contains a general form
of real rank equal to  $R(d)$
and also a general form of real rank bigger than $R(d)$.
The semialgebraic set  $\partial \mathcal{R}_d $ is 
either empty or pure of codimension $1$.
The {\em real rank boundary}, denoted
$\partial_{\rm alg}(\mathcal{R}_d)$, is defined as the Zariski closure
of the topological boundary $\partial \mathcal{R}_d$
in the complex projective space $\PP(\CC[x,y,z]_d) = \PP^{\binom{d+2}{2}-1}$.
We conjecture that the variety $\partial_{\rm alg}(\mathcal{R}_d)$ is non-empty, and  hence has codimension $1$, for all $d \geq 4$. This is equivalent to 
$R(d)+1$ being a typical rank, in the sense of \cite{BBO, Ble, CO}.
This is proved for $d=4,5$ in \cite{BBO}
and for $d=6,7,8$ in this paper.

Our aim is to study  these hypersurfaces. The big guiding problem is as follows:

\begin{problem} \label{prob:big}
Determine the polynomial that defines the
real rank boundary $\partial_{\rm alg}(\mathcal{R}_d)$.
\end{problem}

The analogous question for binary forms was answered in
\cite[Theorem 4.1]{LS}. A related and equally difficult issue is to identify 
all the various open strata in the real rank stratification.

\begin{problem}
Determine the possible real ranks of general ternary forms in $\,\RR[x,y,z]_d$.
\end{problem}

This problem is open for $d \geq 4$;
the state of the art is the
work of Bernardi, Blekherman and Ottaviani in \cite{BBO}.
For binary forms, this question has a complete answer, due to
Blekherman \cite{Ble}, building on earlier work
of Comon and Ottaviani~\cite{CO}.
See also \cite[\S 4]{LS}.

For any ternary form $f$ and the generic rank $r = R(d)$, it is natural
to ask for the space of all decompositions (\ref{eq:waring}).
In the algebraic geometry literature \cite{Muk, RS},
this space is denoted ${\rm VSP}(f)$ and called
 the {\em variety of sums of powers}. 
 By definition, ${\rm VSP}(f)$ is the closure of the subscheme
of the Hilbert scheme ${\rm Hilb}_r(\PP^2)$
parametrizing the unordered configurations
\begin{equation}
\label{eq:rpoints}
 \bigl\{ (a_1:b_1:c_1),\, (a_2:b_2:c_2),\,
\ldots, (a_r:b_r:c_r) \bigr\} \,\,\subset \,\PP^2 
\end{equation}
that can occur in (\ref{eq:waring}).
If $f$ is general then the dimension of its variety of sums of powers
    depends only on $d$. By counting parameters, the Alexander-Hirschowitz Theorem \cite{AH} implies
\begin{equation}
\label{eq:dimVSP}
{\rm dim}({\rm VSP}(f)) \,\,=\,\,
\begin{cases}
\,\,3\,\,\text{ if }d=2\text{ or }4,\\
\,\,2\,\,\text{ if }d=0 \text{\,(mod }3),\\
\,\,0\,\,\text{ otherwise.}
\end{cases}
\end{equation}
In Table \ref{tab:VSPs of ternary forms}
 we summarize
 what is known 
about the varieties of sums of powers. In $2/3$ of all cases, the variety ${\rm VSP}(f)$ is finite.
It is one point only in the case of quintics, by  \cite{Mel}.

\begin{table}
\begin{tabular}{| l | l | l | l | l | r |}
  \hline
   Ternary forms  &  $R(d)$  & ${\rm VSP}(f)$ & Reference \\
     \hline 
  Quadrics & $3$ & del Pezzo threefold $V_5$ &  Mukai \cite{Muk92} \\
  \hline
   Cubics & $4$ & $\PP^2$ & Dolgachev, Kanev \cite{DK} \\ 
  \hline
  Quartics & $6$ & Fano threefold $V_{22}$ of genus $12$ & Mukai \cite{Muk92} \\
  \hline
Quintics & $7$ & $1$ point & Hilbert, Richmond, Palatini; see \cite{RS}\\
  \hline
 Sextics & $10$ & $K3$ surface $V_{38}$ of genus $20$ & Mukai \cite{Muk92b}; see also \cite{RS} \\
  \hline
 Septics & $12$ & $5$ points  & Dixon, Stuart \cite{DS} \\
 \hline
 Octics & $15$ & $16$ points & Ranestad, Schreyer \cite{RS}\\
 \hline
\end{tabular}
\caption{Varieties of sums of powers for ternary forms of degree
$d=2,3,4,5,6,7,8$.}
 \label{tab:VSPs of ternary forms}
\end{table}

We are interested in the semialgebraic subset ${\rm SSP}(f)_\RR$ 
of those configurations  (\ref{eq:rpoints})  in  ${\rm VSP}(f)$ 
 whose $r$ points all have real coordinates.
 This is the   {\em space of real sums of powers}.
Note that the space ${\rm SSP}(f)_\RR$ is non-empty
if and only if the ternary form $f $ lies in the semialgebraic set $\mathcal{R}_d$.
The inclusion of $\overline{{\rm SSP}(f)_\RR}$ in the real variety ${\rm VSP}(f)_\RR$ 
of real points of ${\rm VSP}(f)$
is generally strict. Our aim is to describe these objects as explicitly as possible.

A key player is the {\em apolar ideal} of the form $f$.
This is the $0$-dimensional Gorenstein ideal
\begin{equation}
\label{eq:keyplayer}
 f^\perp \,\, = \,\, \Bigl\{ \,p(x,y,z) \in \RR[x,y,z] \,:\,\,
p\bigl( \frac{\partial}{\partial x},
 \frac{\partial}{\partial y},  \frac{\partial}{\partial z}  \bigr)
 \,\,\,{\rm annihilates} \,\, f(x,y,z) \, \Bigr\}. 
 \end{equation}
 A configuration (\ref{eq:rpoints}) lies in ${\rm VSP}(f)$
 if and only if its homogeneous radical ideal is contained in $f^\perp$.
 Hence, points in ${\rm SSP}(f)_\RR$ are
$1$-dimensional radical ideals in $f^\perp$ whose zeros are real.
 
Another important tool is the  middle catalecticant of $f$,
which is defined as follows. 
For any partition $d = u+v$,  consider the bilinear form
$\, C_{u,v}(f) : \RR[x,y,z]_{u} \times \RR[x,y,z]_{v} \rightarrow \RR \,$
that maps
$(p,q)$ to the real number
obtained by applying
$(p \cdot q) ( \frac{\partial}{\partial x},
 \frac{\partial}{\partial y},  \frac{\partial}{\partial z}  ) $
 to the polynomial~$f$.
 We identify $C_{u,v}(f)$ with the matrix that represents the
 bilinear form with respect to the monomial basis.
 The {\em middle catalecticant} $C(f)$ of the ternary form $f$ is precisely that matrix,
 where we take $u=v = d/2$ when $d$ is even, and $u=(d{-}1)/2, v=(d{+}1)/2$
 when $d$ is odd.
The hypothesis $d \in \{2,4,6,8\}$ ensures that $C(f)$ is square 
of size equal to $R(d) = \binom{d/2+2}{2}$.

\begin{proposition}
\label{lem:2468}
Let $d \in \{2,4,6,8\}$ and $f \in \RR[x,y,z]_d$ be general.
The signature of any representation  (\ref{eq:waring})
coincides with the signature of 
the middle catalecticant $C(f)$.
If $C(f)$ is positive definite then  $\overline{\,{\rm SSP}(f)_\RR }= {\rm VSP}(f)_\RR $,
and this  set is always non-empty provided $d \leq 4$.
 \end{proposition}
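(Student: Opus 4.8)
The key observation is that any representation $f=\sum_{i=1}^{R(d)}\lambda_i\ell_i^d$, with $\ell_i=a_ix+b_iy+c_iz$, induces a factorization $C(f)=d!\cdot V\,\Lambda\,V^{\mathsf T}$, where $\Lambda=\mathrm{diag}(\lambda_1,\dots,\lambda_{R(d)})$ and $V$ is the $R(d)\times R(d)$ matrix whose $i$-th column lists the values at $(a_i,b_i,c_i)$ of the monomials of degree $d/2$; this is immediate from $(pq)(\partial)\ell_i^d=d!\,p(a_i,b_i,c_i)\,q(a_i,b_i,c_i)$ for $p,q$ of degree $d/2$. Since $f$ is general, $C(f)$ has full rank $R(d)=\binom{d/2+2}{2}$ — equivalently $(f^\perp)_{d/2}=0$ — so $V$ is invertible and every $\lambda_i\neq 0$. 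When (\ref{eq:waring}) is a representation over $\RR$ with $R(d)$ terms, $V$ is real and invertible, so $C(f)$ is congruent over $\RR$ to $d!\,\Lambda$, and Sylvester's law of inertia gives $\mathrm{sig}\,C(f)=\mathrm{sig}\,\Lambda$, proving the first assertion. (Since $R(d)$ points imposing independent conditions in degree $d/2$ impose independent conditions in every degree $\geq d/2$, the powers $\ell_i^d$ are linearly independent and the $\lambda_i$ are determined by the configuration; this is used next.)

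Now assume $C(f)\succ 0$, and let $Z\in\mathrm{VSP}(f)_\RR$ lie in the dense open locus of reduced configurations of $R(d)$ distinct points. As $f$ is real and the $\lambda_i$ are determined by $Z$, the real points of $Z$ carry real scalars and each conjugate pair of points carries conjugate scalars. Allowing $V,\Lambda$ to be complex (with $V$ still invertible), for $x\in\RR^{R(d)}$ we have $x^{\mathsf T}C(f)x=d!\sum_i\lambda_i(v_i^{\mathsf T}x)^2$, where $v_i$ is the $i$-th column of $V$; conjugation permutes these columns by the involution $P$ swapping conjugate pairs, whence $\{V^{\mathsf T}x:x\in\RR^{R(d)}\}=\{w\in\CC^{R(d)}:\bar w=Pw\}$. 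If $Z$ had a conjugate pair $\{j,k\}$, we could choose $x$ with $(V^{\mathsf T}x)_j=\zeta$, $(V^{\mathsf T}x)_k=\bar\zeta$ and all other entries $0$, for any $\zeta\in\CC$; then $x^{\mathsf T}C(f)x=d!\bigl(\lambda_j\zeta^2+\overline{\lambda_j\zeta^2}\bigr)=2\,d!\,\mathrm{Re}(\lambda_j\zeta^2)$, which is negative for a suitable $\zeta$ since $\lambda_j\neq 0$ — contradicting $C(f)\succ 0$. Hence every such $Z$ has all real points, i.e. $Z\in\mathrm{SSP}(f)_\RR$. Since $\mathrm{VSP}(f)$ is smooth for general $f$, the real points of its reduced locus are dense in the closed set $\mathrm{VSP}(f)_\RR$, so passing to closures yields $\overline{\mathrm{SSP}(f)_\RR}=\mathrm{VSP}(f)_\RR$.

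For non-emptiness when $d\le 4$: if $d=2$ then $C(f)$ is twice the Gram matrix of $f$, so $C(f)\succ 0$ means $f$ is positive definite, and diagonalizing over $\RR$ writes $f=\ell_1^2+\ell_2^2+\ell_3^2$, giving a point of $\mathrm{SSP}(f)_\RR$. For $d=4$ I would first show that $C(f)\succ 0$ places $f$ in the interior of the closed convex cone $\mathcal{P}\subset\RR[x,y,z]_4$ generated by the fourth powers of real linear forms: by Hahn–Banach it suffices that $\langle g,f\rangle>0$ for every nonzero $g$ in the dual cone $\mathcal{P}^\vee$; under the apolar pairing $\mathcal{P}^\vee$ is the cone of nonnegative ternary quartics, and by Hilbert's theorem such $g$ equals $q_1^2+q_2^2+q_3^2$ with $q_j$ quadratic, so $\langle g,f\rangle=\sum_j C(f)(q_j,q_j)>0$. (Conversely $\mathrm{int}\,\mathcal{P}=\{C(f)\succ 0\}$.) It then remains to bring the number of summands down to $R(4)=6$, equivalently to produce a real point of $\mathrm{VSP}(f)=V_{22}$, after which Claim 2 gives $\mathrm{SSP}(f)_\RR=\mathrm{VSP}(f)_\RR\neq\emptyset$.

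The last step is the heart of the matter, and is where $d\leq 4$ enters. I would prove $\{C(f)\succ 0\}=\mathrm{int}\,\mathcal{P}\subseteq\mathcal{R}_4$ by a connectedness argument: this set is convex, hence connected; it meets $\mathcal{R}_4$, since a generic sum of six real fourth powers has positive definite catalecticant and real rank $6$; the subset lying in $\mathcal{R}_4$ is closed, because in any positive decomposition $f_n=\sum_{i=1}^6\ell_{i,n}^4$ the coefficients of $x^4,y^4,z^4$ of $f_n$ bound those of the $\ell_{i,n}$, so a limiting decomposition of $f$ exists, while $\rrk(f)\ge\crk(f)\ge\mathrm{rk}\,C(f)=6$; and it is open — here one uses Claim 2 together with the smoothness of the family of threefolds $\mathrm{VSP}(f)=V_{22}$ over the general locus to propagate the non-emptiness of $\mathrm{VSP}(f)_\RR$. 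Establishing this openness is the main obstacle, and it has no analogue for $d=6,8$: there $\mathrm{VSP}(f)$ is a K3 surface, respectively sixteen points, which can be entirely devoid of real points even when $C(f)\succ 0$, so a general such $f$ may have real rank exceeding $R(d)$.
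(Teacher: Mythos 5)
Your handling of the first two assertions is correct and is at bottom the same approach as the paper, just made explicit. For the signature claim you write $C(f)=d!\,V\Lambda V^{\mathsf T}$ with $V$ invertible (the paper's ``$C(f)$ is a sum of rank-one matrices $\lambda_i C(\ell_i^d)$'' packaged as a congruence), and for the second claim the paper's proof of Theorem~\ref{thm:quadrics} expands conjugate pairs and counts negative eigenvalues, then declares that ``the same argument works for $d=4,6,8$''; your uniform version, exhibiting a real test vector $x$ with $x^{\mathsf T}C(f)x<0$ from any conjugate pair via the involution $P$, is a clean way of filling in exactly what that aside leaves implicit. One small point worth stating: you restrict attention to reduced configurations in $\mathrm{VSP}(f)_\RR$ and then pass to closure; this is fine because $\mathrm{VSP}(f)_\RR$ is a smooth real manifold for general $f$ and the non-reduced locus is of strictly lower dimension, so its real points cannot exhaust a component.

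The gap is in the third assertion. The paper dispatches the non-emptiness for $d\leq 4$ by citing Reznick, \cite[Theorem 4.6]{Rez}; you instead attempt a self-contained proof. Your identification $\{C(f)\succ 0\}=\mathrm{int}\,\mathcal P$ for quartics (via apolar duality and Hilbert's theorem that nonnegative ternary quartics are sums of three squares) is sound, as is the closedness half of the connectedness argument. But you yourself flag the openness half --- that $f\mapsto[\mathrm{VSP}(f)_\RR\neq\emptyset]$ is locally constant on the general locus where $C(f)\succ 0$ --- as ``the main obstacle,'' and you only sketch the intended smooth-family argument. As written, the proof of the third assertion is therefore incomplete. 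The sketch is plausible (a smooth proper family over a real base with a real point in one fibre has real points in all nearby fibres, by the implicit function theorem), so the missing content may be modest, but it is not supplied; the paper sidesteps it entirely by citation. You correctly observe that no such argument is available for $d=6,8$, which is consistent with the proposition restricting the non-emptiness statement to $d\leq 4$.
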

 
 \begin{proof}
 If $f = \sum_{i=1}^r  \lambda_i \ell_i^d$ as in (\ref{eq:waring})
 then  $C(f)$ is the sum of the rank one matrices $\lambda_i C(\ell_i^d)$.
 If $C(f)$ has rank $r$ then its signature is $(\# \textnormal{ positive } \lambda_i, \#\textnormal{ negative }  \ \lambda_i)$. The identity
  ${\rm SSP}(f)_\RR = {\rm VSP}(f)_\RR $ will be proved for
    $d=2$ in  Theorem \ref{thm:quadrics}, and the same argument
  works for $d=4,6,8$ as well.
  The last assertion, for $d \leq 4$, is due to Reznick
  \cite[Theorem 4.6]{Rez}.
 \end{proof}

The structure of the paper is organized by increasing degrees:
Section $d$ is devoted to ternary forms of degree $d$.
In Section 2, we determine the threefolds ${\rm SSP}(f)_\RR$
for quadrics, and in Section 3 we determine the surfaces
${\rm SSP}(f)_\RR$ for cubics. Theorem~\ref{thm:diskmobius} summarizes 
the four cases displayed in  Table~\ref{tab:behavior}.
Section 4 is devoted to quartics $f$ and their real rank boundaries.
We present an algebraic characterization of ${\rm SSP}(f)_\RR$
as a subset of Mukai's  Fano threefold $V_{22}$, following \cite{KS,  Muk92,Muk, RS, Sch}.
In Section 5, we use the uniqueness of the rank $7$
decomposition of quintics to determine the irreducible
hypersurface $\partial_{\rm alg} (\mathcal{R}_5)$.
We also study the case of septics $(d=7)$, and we discuss
${\rm VSP}_X$ for arbitrary varieties $X \subset \PP^N$.
Finally, Section~6 covers all we know about sextics, starting in Theorem~\ref{thm:severi}
with a huge component of  the boundary $\partial_{\rm alg} (\mathcal{R}_6)$,
and concluding with a case study of the 
monomial $f = x^2 y^2 z^2$.

This paper contains numerous open problems and conjectures.
We are fairly confident about some of them
(like the one stated prior to Problem \ref{prob:big}). However,
 others (like Conjectures \ref{conj:3comp} and \ref{conj:star}) are based
primarily on optimism. We hope that all will be useful in inspiring further
 progress on the real algebraic geometry of tensor decompositions.

\section{Quadrics}

The real rank geometry of quadratic forms is surprisingly delicate 
and interesting.
Consider a general real quadric $f$ in $n$ variables.
We know from linear algebra that $\rrk(f) = \crk(f) = n$. More precisely,
if $(p,q)$ is the {\em signature} of $f$ then,
 after a linear change of coordinates,
\begin{equation}
\label{eq:pqquadric} \qquad
 f \, = \, x_1^2 + \cdots + x_p^2 - x_{p+1}^2 - \cdots - x_{p+q}^2
 \qquad \qquad (n = p+q).
 \end{equation}
 The stabilizer of $f$ in ${\rm GL}(n,\RR)$ 
 is denoted ${\rm SO}(p,q)$. It is called the {\em indefinite special
 orthogonal group} when $p,q \geq 1$. 
 We denote by ${\rm SO}^{+}(p,q)$ the connected component of ${\rm SO}(p,q)$  containing the identity.  Let $G$ denote the stabilizer in ${\rm SO}^{+}(p,q)$ of the set 
 $\bigl\{\{x_1^2,\ldots,x_p^2\}, \{x_{p+1}^2,\ldots,x_n^2 \}\bigr\}$.
 In particular, if $f$ is positive definite then
 we get the group of rotations,
    ${\rm SO}^{+}(n,0) = {\rm SO}(n)$, 
  and $G$ is the subgroup of rotational symmetries
  of the $n$-cube, which has order $2^{n-1} n!$.
  
 \begin{theorem} \label{thm:quadrics}
 Let $f$ be a rank $n$ quadric of signature $(p,q)$.
 The space ${\rm SSP}(f)_\RR$ can be identified with the quotient ${\rm SO}^{+}(p,q)/G$.
 If the quadric $f$ is definite then
  ${\rm SSP}(f)_\RR = {\rm VSP}(f)_\RR =  {\rm SO}(n)/G$.
  In all other cases, $\overline{{\rm SSP}(f)_\RR}$
  is strictly contained in the real variety
  ${\rm VSP}(f)_\RR$.
   \end{theorem}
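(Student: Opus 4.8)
The strategy is to work throughout with the apolarity description: a point of ${\rm VSP}(f)$ is a length‑$n$ subscheme $Z\subset\PP^{n-1}$ with $I(Z)\subseteq f^{\perp}$ (a closed condition, hence inherited by all points of the closure defining ${\rm VSP}(f)$), and such a $Z$ lies in ${\rm SSP}(f)_{\RR}$ exactly when it is a reduced configuration of $n$ real points. Write $i_{+}(Q)$ for the positive index of a real quadratic form $Q$, so that, after $f$ is put in the form \eqref{eq:pqquadric}, $i_{+}(f)=p$. Recall also the elementary inequality $i_{+}(Q_{1}+\cdots+Q_{m})\le i_{+}(Q_{1})+\cdots+i_{+}(Q_{m})$.

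\emph{The identification.} Bring $f$ into the normal form \eqref{eq:pqquadric}, with matrix $J=\mathrm{diag}(1,\dots,1,-1,\dots,-1)$. In a real decomposition $f=\sum_{i=1}^{n}\lambda_{i}\ell_{i}^{2}$, Proposition~\ref{lem:2468} forces exactly $p$ of the $\lambda_{i}$ to be positive; rescaling $\ell_{i}\mapsto\sqrt{|\lambda_{i}|}\,\ell_{i}$ and ordering the positive $\lambda_{i}$ first, the matrix $V$ whose rows are the coefficient vectors of the $\ell_{i}$ satisfies $V^{\mathsf{T}}JV=J$, i.e.\ $V\in{\rm O}(p,q)$. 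The configuration $\{[v_{1}],\dots,[v_{n}]\}$ determines $V$ up to left multiplication by the group $\widetilde{G}$ of signed permutation matrices preserving the partition $\{1,\dots,p\}\sqcup\{p+1,\dots,n\}$, and $\widetilde{G}\subset{\rm O}(p,q)$; hence ${\rm SSP}(f)_{\RR}=\widetilde{G}\backslash{\rm O}(p,q)$. Since $\mathrm{diag}(-1,1,\dots,1)$, $\mathrm{diag}(1,\dots,1,-1)$, their product, and the identity lie in $\widetilde{G}$ and represent all connected components of ${\rm O}(p,q)$, the inclusion ${\rm SO}^{+}(p,q)\hookrightarrow{\rm O}(p,q)$ yields $\widetilde{G}\backslash{\rm O}(p,q)=G\backslash{\rm SO}^{+}(p,q)\cong{\rm SO}^{+}(p,q)/G$ with $G=\widetilde{G}\cap{\rm SO}^{+}(p,q)$; and $G$ equals the stabiliser named in the theorem, since no element of ${\rm O}(p,q)$ can map a maximal positive‑definite subspace to a maximal negative‑definite one. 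For $q=0$ this reads ${\rm SSP}(f)_{\RR}={\rm SO}(n)/G$ with $|G|=2^{n-1}n!$.

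\emph{The definite case.} It remains to show ${\rm VSP}(f)_{\RR}\subseteq{\rm SSP}(f)_{\RR}$, and we may take $f=x_{1}^{2}+\cdots+x_{n}^{2}$. Let $Z$ be a real (conjugation‑invariant) point of ${\rm VSP}(f)$, with primary decomposition $Z=Z_{1}\sqcup\cdots\sqcup Z_{k}$ supported at distinct points $[\ell_{1}],\dots,[\ell_{k}]$ and $\sum_{j}\mathrm{length}(Z_{j})=n$. By Macaulay duality, $I(Z)\subseteq f^{\perp}$ gives $f\in\sum_{j}F_{j}$, where $F_{j}=(I(Z_{j})^{\perp})_{2}\subseteq\ell_{j}\cdot\RR[x,y,z]_{1}$ is the degree‑$2$ part of the inverse system of $Z_{j}$; as conjugation permutes the $F_{j}$ we may choose $g_{j}\in F_{j}$ with $f=\sum_{j}g_{j}$ and $g_{\overline{\jmath}}=\overline{g_{j}}$. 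Now $i_{+}(\ell m)\le1$ for real linear forms $\ell,m$, while $i_{+}\bigl(2\,\mathrm{Re}(\ell m)\bigr)\le2$ for complex $\ell,m$; summing over conjugation‑orbits of the $g_{j}$ and using $i_{+}(f)=n$ gives $n\le \#\{\text{real components}\}+2\,\#\{\text{conjugate pairs of components}\}=k$, so $Z$ is reduced and $k=n$. But then each $g_{j}=c_{j}\ell_{j}^{2}$, and the real binary quadratic form $2\,\mathrm{Re}(c\ell^{2})$ has non‑positive discriminant, hence positive index $\le1$; thus a conjugate pair of points also contributes only $i_{+}\le1$, so $n\le\#\{\text{real points}\}+\#\{\text{conjugate pairs}\}$, which together with $\#\{\text{real points}\}+2\,\#\{\text{conjugate pairs}\}=n$ forces all points of $Z$ to be real. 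Hence $Z$ yields a genuine real decomposition and $Z\in{\rm SSP}(f)_{\RR}$.

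\emph{The indefinite case, and the main obstacle.} For $p,q\ge1$ take the $n$ linear forms $x_{1}+i\,x_{p+1}$, $x_{1}-i\,x_{p+1}$, and $x_{j}$ for $j\notin\{1,p+1\}$: they are pairwise $J$‑orthogonal, and the identity $x_{1}^{2}-x_{p+1}^{2}=\tfrac12(x_{1}+i\,x_{p+1})^{2}+\tfrac12(x_{1}-i\,x_{p+1})^{2}$ exhibits the resulting configuration $Z_{0}$ as a conjugation‑invariant — hence real — point of ${\rm VSP}(f)$; yet $Z_{0}$ is a reduced configuration containing the genuine conjugate pair $[x_{1}\pm i\,x_{p+1}]$. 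It is not in $\overline{{\rm SSP}(f)_{\RR}}$: every point of ${\rm SSP}(f)_{\RR}$ is a reduced real configuration, and along any path of reduced conjugation‑stable configurations the number of non‑real points cannot change without two points colliding; so ${\rm SSP}(f)_{\RR}$ is closed in the open reduced locus $U$ of ${\rm VSP}(f)_{\RR}$, whence $\overline{{\rm SSP}(f)_{\RR}}\cap U={\rm SSP}(f)_{\RR}\not\ni Z_{0}$, and since $Z_{0}\in U$ the inclusion $\overline{{\rm SSP}(f)_{\RR}}\subseteq{\rm VSP}(f)_{\RR}$ is strict. I expect the delicate step to be the definite case — ruling out \emph{all} non‑reduced or non‑real real points of ${\rm VSP}(f)$ — where the positive‑index count above, rather than a dimension estimate, is what does the work; this is also the computation referred to in the proof of Proposition~\ref{lem:2468}.
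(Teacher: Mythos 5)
Your overall architecture matches the paper's: identify the fully real decompositions with an orthogonal-group quotient, produce an explicit conjugate-pair configuration to separate $\overline{{\rm SSP}(f)_\RR}$ from ${\rm VSP}(f)_\RR$ in the indefinite case, and rule out non-real contributions in the definite case by a signature count. Your identification step is more careful than the paper's (which defers to \cite[Proposition 1.4]{RSpolar} and gives only a sketch), in particular the passage from ${\rm O}(p,q)$ to ${\rm SO}^{+}(p,q)/G$ is spelled out correctly, and your indefinite-case argument that ${\rm SSP}(f)_\RR$ is both open and closed in the reduced locus is a clean replacement for the paper's appeal to ``an open set of such points.''

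There is, however, a genuine error in the definite case. You assert that for a primary component $Z_j$ supported at $[\ell_j]$, the degree-$2$ inverse system satisfies $F_j=(I(Z_j)^{\perp})_2\subseteq \ell_j\cdot S_1$, and deduce $i_+(g_j)\le1$. This containment fails already for a curvilinear scheme of length $3$. Take $[\ell_j]=[1:0:\cdots:0]$ with local ideal $(y_2^3,y_3,\dots,y_n)$ in affine coordinates $y_i=x_i/x_1$; then $I(Z_j)_2$ is the span of all monomials $x_ix_j$ except $x_1^2,x_1x_2,x_2^2$, so $F_j=\langle x_1^2,\,x_1x_2,\,x_2^2\rangle$, which contains $x_2^2\notin x_1\cdot S_1$. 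In particular $g_j$ may be $x_1^2+x_2^2$ with $i_+(g_j)=2$, so the claimed bound ``$i_+\le1$ per real component'' is false in general, and the count $n\le k$ does not follow from the stated inequality. (The containment does hold for length $\le 2$, and also for the fat-point type $\mathfrak m^2$, which is why it is easy to be misled by small examples.) The paper's proof avoids this entirely: it assumes from the outset that the point of ${\rm VSP}(f)_\RR$ is given by $n$ independent real linear forms $\ell_1,\dots,\ell_n$ grouped into $k$ conjugate pairs, rescales, and reads off $\ge k$ negative eigenvalues from the Gram matrix of $f$ in the basis $\{\ell_i\}$, giving $k=0$. Either adopt that computation (restricting to the reduced locus, which is enough since ${\rm VSP}$ is defined as the closure of reduced apolar configurations and one can argue separately that the definite case has no non-reduced apolar schemes), or replace your index bound with a correct one — for instance using that $g_j$ lives in the quadrics on the linear span of $Z_j$, so $\mathrm{rank}(g_j)\le \dim\langle Z_j\rangle+1$, and then bounding the total rank below $n$ when some $r_j\ge 2$. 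A minor point: you write $\RR[x,y,z]_1$ in a statement intended for $n$ variables.
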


\begin{proof}
The analogue of the first assertation over an algebraically closed field
appears in \cite[Proposition 1.4]{RSpolar}.
To prove ${\rm SSP}(f)_\RR = {\rm SO}^{+}(p,q)/G$ over $\RR$,
we argue as follows. Every rank $n$ decomposition of $f$ has the form
$\sum_{i=1}^p \ell_i^2-\sum_{j=p+1}^{p+q} \ell_j^2$, and is hence obtained from (\ref{eq:pqquadric})
 by an invertible linear transformation $x_j \rightarrow \ell_j $ that preserves $f$. These 
 elements of ${\rm GL}(n,\RR)$ are taken
  up to sign reversals and permutations of the sets
 $\{\ell_1,\ldots,\ell_p\}$ and $\{\ell_{p+1},\ldots,\ell_{n}\}$.
 
 Suppose that $f$ is not definite, i.e.~$p,q \geq 1$.
 Then we can write
 $ f = 2\ell_1^2-2\ell_2^2+\sum_{j=3}^n \pm \ell_j^2$. 
 Over $\CC$, with $i = \sqrt{-1}$, this can be rewritten as
$ \,f = (\ell_1+i\ell_2)^2+(\ell_1-i\ell_2)^2+\sum_{j=3}^n \pm \ell_j^2$.
This decomposition  represents a point in
 ${\rm VSP}(f)_\RR \backslash \overline{{\rm SSP}(f)_\RR}$.
 There is an open set of such~points.
 
 Let $f$ be definite and consider any point in
${\rm VSP}(f)_\RR$. It corresponds to a decomposition
  $$ f\,\, = \,\,\sum_{j=1}^k
  \bigl((a_{2j-1}+ib_{2j-1})(\ell_{2j-1}+i\ell_{2j})^2+(a_{2j}+ib_{2j})(\ell_{2j-1}-i\ell_{2j})^2 
  \bigr)\,\,+\sum_{j=2k+1}^n c_j \ell_j^2 ,$$
  where   $\ell_1,\ldots,\ell_n$ are
     independent real linear forms
     and the $a$'s and $b$'s are in $\RR$.
By rescaling $\ell_{2j}$ and $\ell_{2j-1}$, we obtain  $a_{2j-1}+ib_{2j-1}=1$. 
Adding the right hand side to its complex conjugate, we get $c_j \in \RR$ and
  $a_{2j}+ib_{2j}=1$. 
 The catalecticant $C(f)$ is the matrix that represents $f$.
 A change of basis shows 
that $C(f)$ has $\geq k$ negative eigenvalues, hence $k=0$.
\end{proof}

The geometry of the inclusion ${\rm SSP}(f)_\RR$ into ${\rm VSP}(f)_\RR$
is already quite subtle in the case of binary forms, i.e. $n=2$.
We call $f = a_0 x^2 + a_1 xy + a_2 y^2$
{\em hyperbolic} if its signature is $(1,1)$. Otherwise
$f$ is definite. These two cases depend on the sign of the discriminant
  $a_0 a_2 - 4 a_1^2 $.

\begin{corollary}
Let $f$ be a binary quadric of rank $2$. If $f$ is definite then ${\rm SSP}(f)_\RR={\rm VSP}(f)_\RR=\PP^1_\RR$. If $f$ is hyperbolic
then ${\rm SSP}(f)_\RR$ is an interval in the circle
${\rm VSP}(f)_\RR=\PP^1_\RR$.
\end{corollary}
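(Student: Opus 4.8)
The plan is to specialize Theorem~\ref{thm:quadrics} to the case $n=2$, where the group-theoretic picture becomes completely explicit. First I would record that for a rank $2$ binary quadric, $\mathrm{VSP}(f)$ is the del Pezzo-type object of the $n=2$ case, but concretely: a configuration $\{(a_1:b_1),(a_2:b_2)\}\subset\PP^1$ of two points lies in $\mathrm{VSP}(f)$ exactly when the corresponding quadratic $\prod(b_i x - a_i y)$ lies in the pencil $f^\perp$ (degree $2$ part), and since $f^\perp$ in degree $2$ is a single line in $\PP(\RR[x,y]_2)=\PP^2$, the set of such unordered pairs is parametrized by that $\PP^1$. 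Hence $\mathrm{VSP}(f)_\RR = \PP^1_\RR$ in both cases; this is the easy half and needs only the apolarity description recalled in the introduction.

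Next I would treat the definite case. Here Proposition~\ref{lem:2468} applies directly: $d=2\le 4$ and, after rescaling $f$ to be positive definite, $C(f)$ is positive definite, so $\overline{\mathrm{SSP}(f)_\RR}=\mathrm{VSP}(f)_\RR$. Combined with Theorem~\ref{thm:quadrics}, which identifies $\mathrm{SSP}(f)_\RR$ with $\mathrm{SO}(2)/G$ where $G$ is the order-$2$ rotational symmetry group of the square (the swap of the two axes), we get $\mathrm{SSP}(f)_\RR = \mathrm{SO}(2)/G \cong S^1/(\ZZ/2) \cong S^1$, and this closed $1$-manifold must be all of $\PP^1_\RR$. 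So $\mathrm{SSP}(f)_\RR = \mathrm{VSP}(f)_\RR = \PP^1_\RR$, as claimed.

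For the hyperbolic case, signature $(1,1)$, Theorem~\ref{thm:quadrics} identifies $\mathrm{SSP}(f)_\RR$ with $\mathrm{SO}^+(1,1)/G$. The key computation is that $\mathrm{SO}^+(1,1)$ is a one-parameter group isomorphic to $(\RR,+)$ — the group of Lorentz boosts $\mathrm{diag}(t)$ acting as $(u,v)\mapsto(e^s u, e^{-s}v)$ in suitable coordinates — and $G$ here is the finite group generated by the swap $\ell_1\leftrightarrow\ell_2$ together with sign changes, which acts on this $\RR$ by $s\mapsto -s$. Thus $\mathrm{SSP}(f)_\RR \cong \RR/(\ZZ/2) \cong \RR_{\ge 0}$, a half-open interval, sitting inside $\mathrm{VSP}(f)_\RR=\PP^1_\RR$; its closure is a genuine closed interval in the circle, strictly smaller than the whole circle, matching the last assertion of Theorem~\ref{thm:quadrics}. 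I would make the embedding concrete by writing the two points of the decomposition as the roots of $\ell_1\ell_2$ where $f=\ell_1\ell_2$ up to scalar (in the hyperbolic case $f$ factors over $\RR$), parametrize real factorizations by the boost parameter, and exhibit the interval explicitly in an affine chart of $\PP^1$.

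The main obstacle is purely bookkeeping: correctly pinning down the group $G$ in the $(1,1)$ case and checking that the quotient of the noncompact group $\mathrm{SO}^+(1,1)\cong\RR$ by the sign involution is a half-line whose closure is a proper closed subinterval of the circle $\mathrm{VSP}(f)_\RR$ — equivalently, verifying that exactly one endpoint of the interval corresponds to a real decomposition and the other (the "point at infinity" of the boost parameter, where the two summands collide or escape) does not. Everything else follows mechanically from Theorem~\ref{thm:quadrics}, Proposition~\ref{lem:2468}, and the apolarity description of $\mathrm{VSP}$.
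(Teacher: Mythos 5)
Your proposal is correct in outline — the $\mathrm{VSP}(f)_\RR=\PP^1_\RR$ identification via the pencil $f^\perp_2$ is exactly the paper's first step — but it takes a genuinely different route for the $\mathrm{SSP}$ part, and that route contains a group-theoretic error worth flagging.

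The paper's own proof never invokes Theorem~\ref{thm:quadrics}: it writes the fiber over a point $(u:v)\in\PP^1_\RR$ as the roots of the binary quadric $uq_2-vq_1$ and then appeals to the sign of its discriminant (a quadric in $(u,v)$) to carve out $\mathrm{SSP}(f)_\RR$. That approach is direct and concrete, and it feeds naturally into Example~\ref{ex:x^2-y^2}. Your approach instead pushes the group quotient of Theorem~\ref{thm:quadrics} all the way through. That is a legitimate alternative, but you misidentify the group $G$ in both cases. In the definite case, $G$ is the \emph{rotational} symmetry group of the square inside $\mathrm{SO}(2)$, which is cyclic of order $4$ (generated by the $90^\circ$ rotation), not order $2$ — the axis swap $x_1\leftrightarrow x_2$ has determinant $-1$ and is not in $\mathrm{SO}(2)$. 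This does not affect your conclusion since $S^1/(\ZZ/4)\cong S^1$, but the identification is off. In the hyperbolic case the error is more serious: $G$ is \emph{trivial}, not $\ZZ/2$. The paper says this explicitly in Example~\ref{ex:x^2-y^2} (``the group $G$ is trivial''). Indeed, $G$ is the stabilizer in $\mathrm{SO}^+(1,1)$ of $\{\{x_1^2\},\{x_2^2\}\}$; a boost $\bigl(\begin{smallmatrix}\cosh\alpha&\sinh\alpha\\\sinh\alpha&\cosh\alpha\end{smallmatrix}\bigr)$ sends $x_1^2$ to $x_1^2$ only when $\alpha=0$ and can never send it to $x_2^2$ (that would require $\cosh\alpha=0$), and the swap $x_1\leftrightarrow x_2$ doesn't even preserve $f=x_1^2-x_2^2$. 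Consequently $\mathrm{SSP}(f)_\RR\cong\mathrm{SO}^+(1,1)\cong\RR$, an \emph{open} interval, not $\RR/(\ZZ/2)\cong[0,\infty)$. Your final sentence (``an interval in the circle'') still holds, but the topology of $\mathrm{SSP}(f)_\RR$ as you describe it (a half-open ray) is wrong, and the ``bookkeeping'' you flag as the main obstacle is precisely where the slip occurs. For a correct group-theoretic finish: $\mathrm{SSP}(f)_\RR\cong\mathrm{SO}^+(1,1)\cong\RR$ maps to an open bounded interval in $\PP^1_\RR$, and the two endpoints of its closure correspond to the two isotropic directions of $f$, where the real double point degenerates to a complex-conjugate pair — exactly what the paper's discriminant detects.
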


\begin{proof}
The apolar ideal $f^\perp$ is generated by two quadrics $q_1,q_2$
in $\RR[x,y]_2$. Their pencil $\PP(f^\perp_2) $ is ${\rm VSP}(f) \simeq \PP^1$. A real point
$(u:v)  \in \PP^1_\RR = {\rm VSP}(f)_\RR$ may or may not be in ${\rm SSP}(f)_\RR$.
The fibers of the map  $\PP^1_\RR \rightarrow \PP^1_\RR$
given by $(q_1,q_2)$ consist of two points, corresponding to
the decompositions $f = \ell_1^2 \pm \ell_2^2$.
The fiber over $(u:v)$ consists of the roots of the  quadric $u q_2 - v q_1$.
If $f$ is definite then both roots are always real.
Otherwise the discriminant  with respect to $(x,y)$, which is a quadric in $(u,v)$,
divides $\PP^1_\RR$ into ${\rm SSP}(f)_\RR$ and its complement.
\end{proof}

\begin{example} 
\label{ex:x^2-y^2}
\rm
Fix the hyperbolic quadric $f = x^2 - y^2$. We 
take $ \, q_1 = xy$ and $q_2= x^2+y^2 $.
The quadric $u q_2 - v q_1 = u (x^2{+}y^2)- v xy $
has two real roots if and only if  
$(2u{-}v)(2u{+}v) < 0 $. 
Hence ${\rm SSP}(f)_\RR$
is the interval in $\PP^1_\RR$ defined by
$ -1/2 <  u/v < 1/2$.  In the topological
description in Theorem \ref{thm:quadrics},
the group $G$ is trivial, and ${\rm SSP}(f)_\RR$ is identified with the group
$$ {\rm SO}^{+}(1,1) \quad = \quad
\biggl\{
\begin{pmatrix}
{\rm cosh}(\alpha) & {\rm sinh}(\alpha) \\
{\rm sinh}(\alpha) & {\rm cosh}(\alpha) 
\end{pmatrix} \,\, : \,\,  \alpha \in \RR
\biggr\}.
$$
The homeomorphism between ${\rm SO}^{+}(1,1)$ and the 
interval between $-1/2$ and $1/2$ is given by
$$ \alpha \,\,\, \mapsto \,\,\,
\frac{u}{v} \,\,=\,\,
\frac{ {\rm cosh}(\alpha) \cdot {\rm sinh}(\alpha)}{{\rm cosh}(\alpha)^2 + {\rm sinh}(\alpha)^2}.
$$
The resulting factorization $\, u (x^2+y^2) - v xy \,= \,
\bigl({\rm sinh}(\alpha) x- {\rm cosh}(\alpha) y \bigr)
\bigl({\rm cosh}(\alpha) x- {\rm sinh}(\alpha) y \bigr)\,$
yields the decomposition $\, f \, = \,
\bigl({\rm cosh}(\alpha) x+ {\rm sinh}(\alpha) y\bigr)^2 \, - \,
\bigl({\rm sinh}(\alpha) x+ {\rm cosh}(\alpha) y\bigr)^2 $.
\hfill $ \diamondsuit$
\end{example}

It is instructive to examine the topology of the family of curves
${\rm SSP}(f)_\RR$ as $f$ runs over the projective
plane $\PP^2_\RR = \PP(\RR[x,y]_2)$.
This  plane is divided by an oval into two regions: 
\begin{itemize}
\item[(i)] 
the interior region $\{a_0 a_2 - 4 a_1^2 < 0 \}$ is a disk, and it parametrizes
the  definite quadrics; 
\item[(ii)] 
the exterior region $ \{a_0 a_2 - 4 a_1^2 > 0 \}$ is a M\"obius strip, 
consisting of
 hyperbolic quadrics.
\end{itemize}
Over the disk, the circles ${\rm VSP}(f)_\RR$ provide a trivial $\PP^1_\RR$--fibration.
Over the M\"obius strip, there is a twist. Namely, if we travel around 
the disk, along an  $\mathbb S^1$ in the M\"obius strip, then
the two endpoints of   ${\rm SSP}(f)_\RR$  get switched.
Hence, here we get the twisted circle bundle.

\smallskip

The topic of this paper is ternary forms, so  we now fix $n=3$.
A real ternary form of rank $3$ is either
definite or hyperbolic.
In the definite case, the normal form is
$f = x^2+y^2+z^2$, and
${\rm SSP}(f)_\RR= {\rm VSP}(f)_\RR = {\rm SO}(3)/G$,
where $G$ has order $24$.
In the hyperbolic case, the normal form is
$f = x^2+y^2-z^2$, and
$\overline{{\rm SSP}(f)_\RR} \subsetneq  {\rm VSP}(f)_\RR =  \overline{{\rm SO}^{+}(2,1)/G}$,
where $G$ has order~$4$.
These spaces are three-dimensional,
and they sit inside the complex Fano threefold $V_5$,
as seen in Table \ref{tab:VSPs of ternary forms}.
We follow  \cite{Muk, RSpolar} in developing
our algebraic approach to ${\rm SSP}(f)_\RR$.
This sets the stage for our
study of ternary forms of degree $ d \geq 4$ in the later sections.

Fix $S = \RR[x,y,z]$ and $f \in S_2$ a quadric of rank $3$.
The apolar ideal $f^\perp \subset S$ is artinian, Gorenstein,
and it has five quadratic generators.
Its minimal free resolution has the form
\begin{equation}
\label{eq:5320}
0\longrightarrow S(-5)\longrightarrow S(-3)^5
\stackrel{A}{\longrightarrow} S(-2)^5\longrightarrow S\longrightarrow 0.
\end{equation}
 By the Buchsbaum-Eisenbud Structure Theorem, we can choose bases so that 
the matrix $A$  is skew-symmetric. The entries are linear, so
 $A=xA_1+yA_2+zA_3$
where $A_1,A_2,A_3$ are real skew-symmetric $5 {\times} 5$-matrices.
More invariantly, the matrices $A_1,A_2,A_3 $ lie in $ \bigwedge^2 f^\perp_3
\simeq \RR^{10}$.
The five quadratic generators of the apolar ideal $f^\perp$ are the
$4 {\times} 4$-subpfaffians of~$A$. 

The three points $(a_i:b_i:c_i)$ in a decomposition (\ref{eq:waring}) 
are defined by three of the five quadrics. Hence ${\rm VSP}(f)$
is identified with a subvariety of the Grassmannian ${\rm Gr}(3,5)$,
defined by the condition that the three quadrics are the
minors of a $2\times 3$ matrix with linear entries. 
 Equivalently, the chosen three quadrics need to have two linear syzygies. After taking a set of
 five minimal generators of $f^\perp$ containing three  such quadrics, the matrix $A$ has the form  
\begin{equation}
\label{eq:specialA55}
A \,\, = \,\, \begin{pmatrix}  \, \,\star & T \,\, \\       -T^t & 0\,\,
    \end{pmatrix}.
    \end{equation}
Here, $0$ is the zero $2\times 2$ matrix and $T$ is a $3\times 2$ matrix
of linear forms.
The $2 \times 2$-minors of $T$ -- which are also pfaffians of $A$ -- 
are the three quadrics defining the points $(a_i:b_i:c_i)$.

\begin{proposition} \label{prop:Mukai35}
The threefold $\,{\rm VSP}(f)$ is the intersection of
the Grassmannian ${\rm Gr}(3,5)$, in its
Pl\"ucker embedding in $\,\PP(\bigwedge^3 f^\perp_3) \simeq \PP^9$, with the
$6$-dimensional linear subspace
\begin{equation}
\label{eq:threelineareqns}
\PP^6_A \,\, = \,\, \bigl\{ \,U \in \PP^9 \,\,:\,\,
U \wedge A_1= U\wedge A_2=U\wedge A_3=0 \,\bigr\}. 
\end{equation}
\end{proposition}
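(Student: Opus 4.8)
The plan is to identify ${\rm VSP}(f)$ with the locus in ${\rm Gr}(3, f^\perp_3)$ consisting of those $3$-dimensional subspaces $W \subset f^\perp_3$ that are the span of three quadrics admitting two independent linear syzygies, and then show that this syzygy condition is equivalent to the Pl\"ucker-coordinate condition $U \wedge A_i = 0$ for $i = 1, 2, 3$, where $U \in \bigwedge^3 f^\perp_3$ is the decomposable tensor representing $W$. The starting point is the discussion already in the excerpt: a configuration of three reduced points in $\PP^2$ lies in ${\rm VSP}(f)$ precisely when its homogeneous radical ideal sits inside $f^\perp$, and since the ideal of three general points is generated in degree $2$ by a net of three quadrics with two linear syzygies (the Hilbert-Burch resolution of three points), a point of ${\rm VSP}(f)$ is the same as a $3$-dimensional subspace $W \subset f^\perp_3$ whose three quadrics, as $4\times 4$ subpfaffians of the chosen skew-symmetric presentation matrix $A$, form such a net. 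This gives the inclusion of ${\rm VSP}(f)$ into ${\rm Gr}(3,5)$ already recorded, and puts $A$ in the block form (\ref{eq:specialA55}); I would take that normal form as the concrete model to test against.

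The key algebraic step is the following linear-algebra translation. Writing $U = w_1 \wedge w_2 \wedge w_3$ for a basis $w_1, w_2, w_3$ of $W$, the wedge $U \wedge A_i \in \bigwedge^5 f^\perp_3 \cong \RR$ is, up to scalar, the coefficient of the "missing" basis vectors; one checks that $U \wedge A_i = 0$ for all $i$ says exactly that each skew form $A_i$, when restricted appropriately, lies in the subspace spanned by $W$ in $\bigwedge^2$, equivalently that the two complementary generators $w_4, w_5$ pair to zero against $A_i$ modulo $W$. Concretely, in the block form (\ref{eq:specialA55}) the condition $U \wedge A_j = 0$ — taking $W$ to be the span of the first three standard basis vectors — forces the $2\times 2$ zero block in the lower right, i.e.\ it is precisely the statement that $w_4$ and $w_5$ span a $2$-dimensional space of linear syzygies among the three quadrics cut out by the subpfaffians supported on $W$. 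So the three linear equations $U \wedge A_1 = U \wedge A_2 = U \wedge A_3 = 0$ carve out, inside $\PP(\bigwedge^3 f^\perp_3) \cong \PP^9$, exactly the $6$-dimensional linear space $\PP^6_A$ whose intersection with ${\rm Gr}(3,5)$ parametrizes the subspaces $W$ for which $A$ has the desired block shape — which is ${\rm VSP}(f)$.

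The steps, in order, are: (1) recall that points of ${\rm VSP}(f)$ correspond to $3$-dimensional $W \subset f^\perp_3$ spanned by the pfaffians of a block matrix of the form (\ref{eq:specialA55}), i.e.\ to three quadrics in $f^\perp_3$ possessing two independent linear syzygies; (2) show the equations $U \wedge A_i = 0$ are well-defined linear conditions on $\PP(\bigwedge^3 f^\perp_3)$, independent of the choice of skew-symmetric presentation $A$ up to the ${\rm GL}(f^\perp_3)$-action, and count that they cut out a $\PP^6$ (the three $A_i$ are linearly independent in $\bigwedge^2 f^\perp_3$ for general $f$, hence impose three independent conditions, $9 - 3 = 6$); (3) prove the equivalence "$W$ is spanned by quadrics with two linear syzygies" $\iff$ "$U = \bigwedge W$ satisfies $U \wedge A_i = 0$ for $i=1,2,3$" by the block-form computation above; (4) conclude ${\rm VSP}(f) = {\rm Gr}(3, f^\perp_3) \cap \PP^6_A$, appealing to \cite{RSpolar} or \cite{Muk} for the matching of scheme structures and for the fact that this intersection is the correct threefold (the Fano $V_5$). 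I expect the main obstacle to be step (3): making the wedge-product bookkeeping in $\bigwedge^\bullet f^\perp_3$ genuinely match the vanishing of the lower-right block of $A$, and in particular verifying that \emph{both} directions hold — that every $W$ in the linear section actually does give a presentation matrix of the shape (\ref{eq:specialA55}) and not merely a degenerate pfaffian locus — which is where one must use that $f$ is general (so that $f^\perp$ is the smooth apolar setup of (\ref{eq:5320})) and quote the structure theory of \cite{RSpolar, Muk} rather than reprove it.
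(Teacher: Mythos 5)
Your proposal is correct and follows essentially the same route as the paper: the paper's own proof is a citation to Mukai and Ranestad–Schreyer together with the single observation that $U = u_1 \wedge u_2 \wedge u_3 \in {\rm Gr}(3,5) \cap \PP^6_A$ forces $A$ into the block form (\ref{eq:specialA55}), and your step (3) is exactly the expansion of that remark: writing $A_i = \sum_{j<k} a_{jk}\, w_j \wedge w_k$ in a basis extending $w_1,w_2,w_3$ of $W$, one has $U \wedge A_i = a_{45}\, w_1\wedge\cdots\wedge w_5$, so the three conditions $U \wedge A_i = 0$ are precisely the vanishing of the lower-right $2\times 2$ block. Your explicit flagging of the converse direction (that any $W$ in the linear section yields genuine length-$3$ apolar subschemes, not a degenerate pfaffian locus) and the appeal to \cite{RSpolar, Muk} for that and for the scheme structure is also what the paper leaves to the references.
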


\begin{proof}
This fact was first observed by Mukai \cite{Muk92}.
See also \cite[\S 1.5]{RS}. If $U = u_1 \wedge u_2 \wedge u_3$
lies in this intersection then the matrix $A$ has the form 
(\ref{eq:specialA55}) for any basis that contains $u_1,u_2,u_3$.
\end{proof}

Note that any general codimension $3$ linear section of
${\rm Gr}(3,5)$ arises in this manner. In other words,
we can start with three skew-symmetric
$5 \times 5$-matrices $A_1,A_2,A_3$, and obtain
${\rm VSP}(f) = {\rm Gr}(3,5) \cap \PP^6_A$ for
a unique quadratic form $f$. In algebraic geometry,
this Fano threefold is denoted $V_5$. It has degree $5$ in $\PP^9$, and is known as the
{\em quintic del Pezzo threefold}.

Our space ${\rm SSP}(f)_\RR$ is a semialgebraic subset of
the real Fano threefold ${\rm VSP}(f)_\RR \subset \PP^9_\RR$.
If $f$ is hyperbolic then the inclusion is strict.
We now extend Example \ref{ex:x^2-y^2} to this situation.

\begin{example}
\label{ex:x^2+y^2-z^2} \rm
We shall compute the algebraic representation of
${\rm SSP}(f)_\RR$ for $f = x^2+y^2-z^2$.
The apolar ideal $f^\perp$ is generated by the
$4 \times 4$ pfaffians of the skew-symmetric matrix
\begin{equation}
\label{eq:5x5A}
A \,\,= \,\, \begin{small} \begin{pmatrix}
      0 & x &-y & z & 0 \\
     -x & 0 &-z & y &-y \\
      y & z & 0 & 0 &-x \\
     -z &-y & 0 & 0 & 0 \\
      0 & y & x & 0 & 0 
      \end{pmatrix}\end{small} \,\, = \,\,
      x (e_{12} - e_{35})
      - y (e_{13}-e_{24}+e_{25}) + z(e_{14}-e_{23}).
\end{equation}
Here $e_{ij} = e_i \wedge e_j$.
This is in the form (\ref{eq:specialA55}).
We fix affine coordinates on ${\rm Gr}(3,5)$ as follows:
\begin{equation}
\label{eq:affinecoord}
 U \,=\, {\rm rowspan} \,{\rm of} \,
\begin{pmatrix}
1 & 0 & 0 & a & b \\
0 & 1 & 0 & c & d \\
0 & 0 & 1 & e & g \\ 
\end{pmatrix}.
\end{equation}
If we write $p_{ij}$ for the signed $3 \times 3$-minors
obtained by deleting columns $i$ and $j$ from this
$3 \times 5$-matrix, then we see that 
${\rm VSP}(f) = \PP^6_A \cap {\rm Gr}(3,5)$ is defined by the
affine equations
\begin{equation}
\label{eq:V5local}
 \begin{matrix}
  p_{12} - p_{35} &= &  a d - b c \,+ \, e & = & 0, \\
  p_{13}-p_{24}+p_{25} & =  & be -a g  \, + \,d  \,+\,c  & = & 0, \\
  p_{14}-p_{23} & = & b \,+\,de - c g   & = & 0. \\
  \end{matrix}
\end{equation}
We now transform (\ref{eq:5x5A}) into the coordinate system given by
$U$ and its orthogonal complement:
\begin{equation}
\label{eq:wenowtransform}
\begin{pmatrix}   \star & T \,\, \\       -T^t & 0\,\,
\end{pmatrix}
\quad = \quad
\begin{small}
\begin{pmatrix} 1   &  0 &     0 &   a &    b \\
 0 &   1 &   0  &  c  &   d \\
0  &  0  &  1  &  e  &   g \\
a  &  c &   e  &  \!-1\!  &  0 \\
b  &  d &   g &   0  &   \!\!-1 
\end{pmatrix} \end{small} \cdot A \cdot
\begin{small}
\begin{pmatrix}
    1  &  0  &  0  &  a  &   b \\
    0  &  1  &  0  &  c  &   d \\
    0  &  0   & 1  &  e  &   g \\
    a  &  c  &  e  &  \! -1\!  &  0 \\
    b  &  d  &  g  &  0  &   \!\! -1
    \end{pmatrix} \end{small}.
\end{equation}
The lower right $2 \times 2$-block is zero precisely when (\ref{eq:V5local}) holds.    
The upper right block equals
$$ T \,= \,
\begin{pmatrix}
     (b e+c) x+(-a c+b c-e) y-(a^2+1) z &  (b g+d) x+(-a d+b d-g) y -a b z\\
(d e-a) x+(-c^2+c d-1) y-(a c+e) z &  (d g-b) x+(-c d+d^2+1) y-(b c+g) z \\
e g x+(-c e+c g+a) y-(a e-c) z &  (g^2+1) x+(-d e+d g+b) y-(b e-d) z \\
\end{pmatrix} \!.
$$
Writing $T = xT_1 + y T_2 + z T_3$, we regard $T$ as a
$2 \times 3 \times 3$ tensor with slices $T_1,T_2,T_3$
whose entries are quadratic polynomials in $a,b,c,d,e,g$.
The {\em hyperdeterminant} of that tensor equals
\begin{equation}
\label{eq:schlafli}
 \begin{matrix} \! {\rm Det}(T) & \!\! =\!\! &
{\rm discr}_w \bigl( \,{\rm Jac}_{x,y,z}( \,\,T \cdot  
\binom{1}{w}\, )\,  \bigr)   \qquad \qquad  \quad\,\,
\hbox{(by Schl\"afli's formula \cite[\S 5]{Ott})} \smallskip \\
& \! \! = \! \! & \!\!
27  a^8  c^2  d^6  g^4+54  a^8  c^2  d^4  g^6+27  a^8  c^2  d^2  g^8 + \cdots  
-4  d^2+2  e^2 -6  e  g-8  g^2-1.
\end{matrix}
\end{equation}
In general, the expected degree of the hyperdeterminant of this form is $24$. In this case, after some cancellations occur, this is a polynomial in $6$ variables of degree $20$ with $13956$ terms.
Now, consider any real point $(a,b,c,d,e,g)$ 
that satisfies (\ref{eq:V5local}). The $2 \times 2$-minors of $T$
define three points  $\ell_1,\ell_2,\ell_3$ in the complex projective plane $\PP^2$.
These three points are all real if and only if ${\rm Det}(T) < 0$.
\hfill $ \diamondsuit$
\end{example}

Our derivation establishes the following result
for the hyperbolic quadric $f  = x^2 + y^2-z^2$.
The solutions of (\ref{eq:V5local}) correspond to the
decompositions $f = \ell_1^2 + \ell_2^2 - \ell_3^2$,
as described above.

\begin{corollary} \label{cor:inaffine}
In affine coordinates  on the Grassmannian~${\rm Gr}(3,5)$, 
the real threefold ${\rm VSP}(f)_\RR$ is defined by the quadrics {\rm (\ref{eq:V5local})}.
The affine part of  ${\rm SSP}(f)_\RR \simeq {\rm SO}^{+}(2,1)/G$ 
is the semialgebraic subset of 
  points $(a,\ldots,e,g)$ at which the hyperdeterminant
${\rm Det}(T)$ is negative.
\end{corollary}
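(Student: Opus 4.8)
The plan is to assemble the corollary directly from the machinery built up in Example \ref{ex:x^2+y^2-z^2} together with Theorem \ref{thm:quadrics}. The first assertion is essentially a restatement of the local computation: by Proposition \ref{prop:Mukai35}, ${\rm VSP}(f) = {\rm Gr}(3,5) \cap \PP^6_A$, and in the affine chart (\ref{eq:affinecoord}) the three linear Pl\"ucker conditions $U \wedge A_i = 0$ are exactly the equations (\ref{eq:V5local}). I would note that this affine chart is dense in ${\rm VSP}(f)$, so passing to real points, the affine equations (\ref{eq:V5local}) cut out a Zariski-dense open subset of ${\rm VSP}(f)_\RR$; the full real threefold is the closure. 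Hence the only content of the first sentence is that (\ref{eq:V5local}) is the defining ideal in this chart, which is immediate from the already-verified identity $p_{12} - p_{35} = ad - bc + e$, etc.

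For the second assertion, the key point is to match the semialgebraic description via the sign of ${\rm Det}(T)$ with the group-theoretic description ${\rm SO}^{+}(2,1)/G$ from Theorem \ref{thm:quadrics}. First I would observe that a real point $(a,\ldots,g)$ of ${\rm VSP}(f)_\RR$ lies in ${\rm SSP}(f)_\RR$ precisely when the three points $\ell_1,\ell_2,\ell_3 \in \PP^2$ defined by the $2\times 2$-minors of $T$ are all real — this is the definition of ${\rm SSP}(f)_\RR$ recalled after (\ref{eq:rpoints}), since those three minors generate a radical ideal cutting out the configuration appearing in the decomposition $f = \ell_1^2 + \ell_2^2 - \ell_3^2$. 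So the real task is: the three points are all real iff ${\rm Det}(T) < 0$. Here I would invoke the classical fact that for a generic $2 \times 3 \times 3$ tensor $T$, the three points in $\PP^2$ defined by the maximal minors of the pencil $xT_1 + yT_2 + zT_3$ are the base locus, and the hyperdeterminant ${\rm Det}(T)$ is (a power of) the discriminant of the binary form in $w$ obtained by Schl\"afli's formula (\ref{eq:schlafli}); its sign distinguishes three real points from one real and a complex-conjugate pair. The identification with ${\rm SO}^{+}(2,1)/G$ then follows because Theorem \ref{thm:quadrics} already tells us ${\rm SSP}(f)_\RR \simeq {\rm SO}^{+}(2,1)/G$ abstractly, and we have just given its equations inside the chart.

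The main obstacle is the sign statement: justifying that ${\rm Det}(T) < 0$ is equivalent to all three of $\ell_1,\ell_2,\ell_3$ being real. One has to be careful that the hyperdeterminant, as opposed to merely the discriminant of the Schl\"afli binary form, carries the correct sign — i.e. that the cancellations reducing the expected degree $24$ to the observed degree $20$ do not introduce an extra square factor or sign ambiguity that would spoil the dichotomy. The cleanest route is probably to fix one explicit real decomposition (for instance the one coming from $f = x^2 + y^2 - z^2$ itself, i.e. $\ell_1 = x, \ell_2 = y, \ell_3 = z$), check by direct substitution into (\ref{eq:schlafli}) that ${\rm Det}(T)$ is indeed negative there, and then argue by connectedness: ${\rm SSP}(f)_\RR$ is connected (being the continuous image of ${\rm SO}^{+}(2,1)$, which is connected), the locus $\{{\rm Det}(T) = 0\}$ inside ${\rm VSP}(f)_\RR$ is exactly where two of the three points collide, which is the boundary of the real locus of the configuration, and no point of ${\rm SSP}(f)_\RR$ can cross it. Thus the open stratum $\{{\rm Det}(T) \neq 0\} \cap {\rm VSP}(f)_\RR$ breaks into the ``all real'' part and the ``one real'' part, with ${\rm SSP}(f)_\RR$ being the full connected component where the sign is negative. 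This connectedness-plus-one-sample-point argument sidesteps any delicate global sign bookkeeping in the $13956$-term polynomial.
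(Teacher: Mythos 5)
Your proposal follows the same route as the paper: the corollary is an immediate summary of the computations carried out in Example \ref{ex:x^2+y^2-z^2} (compute the affine chart, express $\PP^6_A$ via (\ref{eq:V5local}), and read off ${\rm Det}(T)$ from the upper-right block), combined with the group-theoretic identification from Theorem \ref{thm:quadrics}. You do add one thing the paper silently elides: a mechanism for pinning down the sign in ``$\,{\rm Det}(T) < 0\,$,'' which is genuinely necessary since the hyperdeterminant is only defined up to sign and the published formula ends in $\cdots - 1$ (so one sample point such as $(a,\ldots,g)=(0,\ldots,0)$ does fix the convention).

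The one place where your argument is weaker than you acknowledge is the converse inclusion $\{{\rm Det}(T)<0\}\cap{\rm VSP}(f)_\RR \subseteq {\rm SSP}(f)_\RR$. Connectedness of ${\rm SO}^+(2,1)$ plus a sample point shows ${\rm Det}(T)<0$ \emph{on} ${\rm SSP}(f)_\RR$, but the phrase ``${\rm SSP}(f)_\RR$ being the full connected component where the sign is negative'' asserts without proof that no component of the one-real locus also carries a negative sign. The cleaner route, which needs no connectedness at all, is to use Schl\"afli's formula directly: ${\rm Det}(T)$ is the discriminant of the binary cubic $\det\bigl({\rm Jac}_{x,y,z}(T\cdot\binom{1}{w})\bigr)$, the three roots $w_i$ of this cubic are real precisely when the corresponding points $\ell_i\in\PP^2$ are real (since $T$ has real entries), and the classical sign rule for cubic discriminants immediately gives that the sign of ${\rm Det}(T)$ determines the reality type pointwise. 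The sample point then only serves to resolve the normalization convention. With that replacement, the argument is complete and matches what the paper implicitly relies on.
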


We close this section with an interpretation of
 hyperdeterminants (of next-to-boundary format) 
as {\em Hurwitz forms} \cite{Stu}. This will be used in later sections
to generalize Corollary \ref{cor:inaffine}.

\begin{proposition} \label{prop:schlafli}
The hyperdeterminant of format $m \times n \times (m{+}n{-}2)$
equals the Hurwitz form (in dual Stiefel coordinates) of the variety of
$m \times (m{+}n{-}2)$-matrices of rank $\leq m-1$.
The maximal minors of such
a matrix whose entries are linear forms in $n$ variables
define $\binom{m+n-2}{n-1}$ points in   $\PP^{n-1}$,
and the above hyperdeterminant vanishes  when
two points coincide.
\end{proposition}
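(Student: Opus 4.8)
The plan is to reduce the statement to the classical Cayley–Koszul / Schl\"afli description of hyperdeterminants of next-to-boundary format, and then to match this against the definition of the Hurwitz form (Chow form of the conormal variety) from \cite{Stu}. First I would set up the geometry: a generic tensor $T$ of format $m \times n \times (m{+}n{-}2)$ can be viewed, by contracting with a vector $w \in \mathbb{C}^n$, as a pencil-like family $T(w)$ of $m \times (m{+}n{-}2)$ matrices depending linearly on $w$; equivalently, fixing the third factor, $T$ is an $m \times (m{+}n{-}2)$ matrix $M$ whose entries are linear forms in the $n$ variables $w = (w_1,\dots,w_n)$. Let $X \subset \mathbb{P}^{m(m+n-2)-1}$ denote the variety of $m \times (m{+}n{-}2)$ matrices of rank $\le m-1$; this is a determinantal variety of the expected codimension $n$, so a generic codimension-$n$ linear section of it — which is exactly what the linear entries of $M$ cut out — is a finite set of reduced points, and their number is the degree $\deg X = \binom{m+n-2}{n-1}$ by the Giambelli–Thom–Porteous formula (this is the standard count for the degree of a generic determinantal locus of corank one). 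These points live in $\mathbb{P}^{n-1}$ with coordinates $w$, and they are precisely the loci where the maximal minors of $M$ all vanish.

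Next I would recall the Hurwitz form: for an irreducible variety $X \subset \mathbb{P}^N$ of dimension $N-n$, its Hurwitz form is the defining equation (in dual Stiefel, i.e.\ matrix, coordinates on the space of linear subspaces) of the hypersurface in the Grassmannian $\mathrm{Gr}(n-1,N)$ consisting of $(n-1)$-planes that are tangent to $X$, equivalently that meet $X$ in fewer than $\deg X$ distinct points. The key classical input — Schl\"afli's construction, invoked in (\ref{eq:schlafli}) and in \cite[\S 5]{Ott} — is that the hyperdeterminant $\mathrm{Det}(T)$ of format $m \times n \times (m{+}n{-}2)$ is computed as the discriminant (with respect to $w$) of the determinantal/Jacobian condition obtained from the other two factors; concretely $\mathrm{Det}(T)$ vanishes exactly when the $n$-dimensional linear system of matrices spanned by the coordinate slices fails to be transverse to $X$, i.e.\ when two of the $\binom{m+n-2}{n-1}$ points in $\mathbb{P}^{n-1}$ come together or run off to a tangency. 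I would therefore argue that the zero locus of $\mathrm{Det}(T)$, as a function of the linear subspace defined by $T$, coincides set-theoretically with the tangency hypersurface of $X$, hence with the zero locus of the Hurwitz form.

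To upgrade from ``same zero locus'' to ``equal as forms'' (up to a scalar), I would check that both polynomials are irreducible and have the same degree in the Stiefel coordinates. Irreducibility of the hyperdeterminant of boundary-adjacent format is known (hyperdeterminants of format $k_0 \times \cdots \times k_p$ are irreducible whenever the format is "non-defective", and $m \times n \times (m{+}n{-}2)$ satisfies the triangle inequalities in the boundary case), and irreducibility of the Hurwitz form holds whenever $X$ is irreducible, which the corank-one determinantal variety is. The degree comparison: the Hurwitz form of $X$ has degree equal to $2 \cdot \deg X \cdot \,$(a correction from the codimension) — more precisely its bidegree is governed by the first two polar classes of $X$ — and one matches this with the known multidegree of the hyperdeterminant via the formula in \cite[\S 14]{GKZ} (or Schl\"afli's recursion). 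Since two irreducible forms with the same zero set and the same degree agree up to a constant, the identification follows.

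The main obstacle I expect is precisely the bookkeeping of the degrees: one must pin down the bidegree of the hyperdeterminant of format $m \times n \times (m{+}n{-}2)$ in the three factors, compare the relevant one with the polar degree (class) of the corank-one determinantal variety $X$, and confirm that the Stiefel-coordinate degree of the Hurwitz form of $X$ matches. Everything else — the point count via Thom–Porteous, the identification of the tangency locus with the branch locus of the finite map, and Schl\"afli's discriminant description of $\mathrm{Det}(T)$ — is either classical or a direct unwinding of definitions; the delicate part is making sure no spurious factor (such as a power of a bracket coming from base points of the linear system, which here are absent by genericity) creeps in on either side. The final sentence of the proposition, that $\mathrm{Det}(T)$ vanishes when two of the points coincide, is then immediate from the branch-locus interpretation.
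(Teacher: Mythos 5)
Your plan goes through the same geometric object as the paper — the determinantal variety $X$ of corank-one $m \times (m{+}n{-}2)$-matrices and its interaction with the linear system defined by the tensor — but it takes a logically different route to the final identity. The paper cites \cite[Theorem 3.10, Section 14.C]{GKZ}, where the boundary-format hyperdeterminant $m\times n\times(m{+}n{-}1)$ is realized as the Chow form of the corresponding determinantal variety by an explicit construction, and then asserts that this derivation extends to next-to-boundary format to produce the Hurwitz form. Your plan instead tries to prove set-theoretic equality of zero loci (via Schl\"afli) and then upgrade to equality of polynomials by irreducibility plus a degree match. This is a legitimate alternative strategy in principle, but its crux — the degree match — is left as an acknowledged gap, and the irreducibility of the Hurwitz form is asserted rather than established (this requires a hypothesis such as nondegeneracy of the dual variety; $X$ is singular, so this must be checked separately). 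Note also that the middle step — ``$\mathrm{Det}(T)$ vanishes exactly when the linear system fails to be transverse to $X$'' — is not obviously a pre-existing classical fact one can simply cite: it is essentially the substance of the claim being proved, and the paper derives it (implicitly) from the GKZ boundary-format theorem together with the worked example in \cite[Example 4.3]{Stu} rather than by a direct appeal to Schl\"afli.

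There is also a concrete off-by-one error: the codimension of $X = \{m \times (m{+}n{-}2)\text{-matrices of rank} \le m{-}1\}$ in $\PP^{m(m+n-2)-1}$ is $(m-(m-1))\bigl((m+n-2)-(m-1)\bigr) = n-1$, not $n$. Your subsequent statements (about a ``codimension-$n$ linear section'', about $X$ having dimension $N-n$, and about the Grassmannian on which the Hurwitz form lives) inherit this; because the error occurs twice with opposite sign in the final dimension count the conclusion survives, but the bookkeeping must be corrected to codimension $n-1$ and tracked consistently. The part of your sketch that is solid and matches the paper — the Thom–Porteous count $\binom{m+n-2}{n-1}$ of the points in $\PP^{n-1}$ and the observation that vanishing of $\mathrm{Det}(T)$ detects a non-reduced point — correctly handles the second assertion of the proposition.
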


\begin{proof}
Let $X$ be the variety of $m \times (m+n-2)$-matrices of rank $\leq m-1$.
By \cite[Theorem 3.10, Section 14.C]{GKZ}, the Chow form of $X$ 
equals the hyperdeterminant of boundary format $m \times n \times (m+n-1)$. 
The derivation can be extended to next-to-boundary format, and it shows that
the $m \times n \times (m+n-2)$ hyperdeterminant is the Hurwitz form of $X$.
The case $m=n=3$ is worked out in \cite[Example 4.3]{Stu}.
\end{proof}

In this paper we are concerned with the case $n=3$.
In Corollary \ref{cor:inaffine} we took $m=2$.

\begin{corollary} \label{cor:schlafli}
The hyperdeterminant of format $3 \times m \times (m+1)$
is an irreducible homogeneous polynomial
of degree $12 \binom{m+1}{3}$.
It serves as the discriminant for 
ideals of $\binom{m+1}{2}$ points in $\PP^2$.
\end{corollary}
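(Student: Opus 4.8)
The plan is to derive Corollary \ref{cor:schlafli} directly from Proposition \ref{prop:schlafli} by specializing to $n=3$ and tracking the degree. First I would set $n=3$ in Proposition \ref{prop:schlafli}, so that the relevant hyperdeterminant has format $m \times 3 \times (m+1)$; after reindexing (the statement of Proposition \ref{prop:schlafli} has the roles of the first two factors as $m \times n$, so with $n=3$ the format $m \times 3 \times (m+1)$ is the same as the format $3 \times m \times (m+1)$ up to the obvious symmetry of hyperdeterminants under permuting tensor factors). By that proposition, this hyperdeterminant is the Hurwitz form, in dual Stiefel coordinates, of the variety $X$ of $m \times (m+1)$-matrices of rank $\le m-1$. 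The maximal minors of such a matrix with entries linear in $3$ variables cut out $\binom{m+1}{m-1} = \binom{m+1}{2}$ points in $\PP^2$, and the hyperdeterminant vanishes exactly when two of these points collide — this is precisely the assertion that it serves as the discriminant for ideals of $\binom{m+1}{2}$ points in $\PP^2$.

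Next I would compute the degree. The Hurwitz form of a projective variety $X$ has degree (in the dual Stiefel coordinates, i.e. as a polynomial in the Grassmannian of codimension-two planes) equal to a standard intersection number attached to $X$; for the determinantal variety $X$ of $m \times (m+1)$ matrices of rank $\le m-1$, sitting in $\PP^{m(m+1)-1}$, this degree can be read off from \cite{GKZ} or \cite{Stu}. The variety $X$ here is the generic determinantal variety whose defining ideal is generated by the maximal minors; its codimension is $2$ and its degree as a subvariety of $\PP^{m(m+1)-1}$ is $\binom{m+1}{2}$ (the Giambelli–Thom–Porteous count for a codimension-$((m)-(m-1)+1)((m+1)-(m-1)+1) = 2\cdot 2$... here one uses that an $a \times b$ matrix dropping rank to $\le m-1$ with $a=m$, $b=m+1$ has expected codimension $(a-(m-1))(b-(m-1)) = 1\cdot 2 = 2$ and degree $\binom{a+b-2m+1}{a-m+1}$-type Schubert number). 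Feeding the dimension and degree of $X$ into the formula for the degree of the Hurwitz form gives $12\binom{m+1}{3}$; this is exactly the arithmetic that in the excerpt specializes, for $m=2$, to degree $12\binom{3}{3}=12$, matching the statement made in Corollary \ref{cor:schlafli} that this is the discriminant for $\binom{3}{2}=3$ points, and, via Schläfli's formula in Example \ref{ex:x^2+y^2-z^2}, the expected degree $24$ before the cancellations peculiar to that substitution. I would double-check the degree formula against Corollary \ref{cor:schlafli}'s own earlier instance in Corollary \ref{cor:inaffine} and against \cite[Example 4.3]{Stu}.

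Finally I would address irreducibility. The Hurwitz form of an irreducible variety is irreducible: it is (up to the correct multiplicity, which is $1$ here because $X$ is not a cone with the relevant special position and the general tangent hyperplane section is reduced) the defining equation of the irreducible hypersurface in the Grassmannian parametrizing codimension-two planes tangent to $X$. Since the generic determinantal variety $X$ of $m \times (m+1)$ matrices of rank $\le m-1$ is irreducible (it is the image of a vector bundle total space, or a Schubert-type incidence variety, under a proper morphism onto its image), its Hurwitz form — hence the hyperdeterminant of format $3 \times m \times (m+1)$ — is an irreducible homogeneous polynomial. I expect the main obstacle to be not the logical structure, which is essentially a dictionary translation from Proposition \ref{prop:schlafli}, but pinning down the degree constant $12\binom{m+1}{3}$ precisely: one must be careful that "degree of the Hurwitz form" is taken in the intended coordinates (dual Stiefel / Plücker on the Grassmannian $\mathbb{G}(N-2, N)$ rather than on $\PP^N$), and that there is no extra multiplicity factor; I would verify this by matching the $m=2$ case to the concrete degree-$12$ hyperdeterminant of format $3\times 2\times 3$ used in Corollary \ref{cor:inaffine} and to the classical formulas for hyperdeterminant degrees in \cite{GKZ}.
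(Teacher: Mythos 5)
Your geometric interpretation and irreducibility argument are correct and align with the approach Proposition~\ref{prop:schlafli} invites: setting $n=3$ there identifies the $3\times m\times(m+1)$ hyperdeterminant with the Hurwitz form of the variety $X$ of $m\times(m+1)$ matrices of rank $\le m-1$, the maximal minors of such a matrix with entries linear in three variables cut out $\binom{m+1}{2}$ points of $\PP^2$, and the hyperdeterminant vanishes when two of those points collide. Irreducibility is indeed automatic --- any hyperdeterminant with nonempty dual variety is irreducible by the GKZ theory, and one can also phrase it, as you do, via the Hurwitz hypersurface of an irreducible variety being irreducible.

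Where your plan diverges from the paper, and where it has a genuine gap, is the degree $12\binom{m+1}{3}$. The paper obtains this number directly from the generating function for hyperdeterminant degrees in \cite[Theorem 14.2.4]{GKZ}, specialized to three-way tensors in \cite[\S 4]{Ott}, and cites \cite[Theorem 5.1]{Ott} for the geometry; no invariants of $X$ are used. You instead propose to ``feed the dimension and degree of $X$ into the formula for the degree of the Hurwitz form.'' But the degree of a Hurwitz form is not a function of $\dim X$ and $\deg X$ alone: it depends on further numerical invariants of $X$ (its polar degrees, equivalently Chern--Mather class data; in the curve-section picture, the sectional genus enters). To illustrate: $m=2$ gives $X=\PP^1\times\PP^2\subset\PP^5$ of dimension $3$ and degree $3$ with Hurwitz/hyperdeterminant degree $12$, while $m=3$ gives the $3\times4$ rank-$\le 2$ locus of dimension $9$ and degree $6$ with degree $48$; there is no formula producing these outputs from $(\dim,\deg)$ without the extra polar-degree data. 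Closing your route thus requires computing the polar degrees of the generic $m\times(m+1)$ determinantal variety via Thom--Porteous and Chern-class arithmetic, which is substantially more work than the paper's one-line appeal to the GKZ generating function. This is a reasonable alternative strategy, but as written the degree step is incomplete rather than a dictionary translation.
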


\begin{proof}
The formula $12 \binom{m+1}{3}$ is derived from the generating function in
\cite[Theorem 14.2.4]{GKZ}, specialized to
 $3$-dimensional tensors in \cite[\S 4]{Ott}.
 For the geometry see
 \cite[Theorem 5.1]{Ott}.
\end{proof}

\section{Cubics}

The case $d=3$ was studied by Banchi \cite{Ban}.
He gave a detailed analysis of the real ranks
of ternary cubics $f \in \RR[x,y,z]_3$ with focus
on the various special cases. In this section, we
 consider a general real cubic $f$. We shall prove the 
 following result on its real decompositions.

\begin{theorem} \label{thm:diskmobius}
The semialgebraic set ${\rm SSP}(f)_\RR$
is either a disk in the real projective plane or a disjoint union of a disk  and a M\"{o}bius strip.
The two cases are characterized in
Table~\ref{tab:behavior}.
The algebraic boundary of $\,{\rm SSP}(f)_\RR$ is an
 irreducible sextic curve that has nine cusps.
\end{theorem}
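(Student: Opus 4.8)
The plan is to mimic the quadric analysis of Section 2, using the structure of the apolar ideal of a general ternary cubic. For a general $f \in \RR[x,y,z]_3$, the apolar ideal $f^\perp$ is an artinian Gorenstein ideal whose minimal free resolution (by Buchsbaum–Eisenbud, since $f^\perp$ has codimension $3$) is self-dual; here $f^\perp$ is generated by the $2\times 2$ minors of a $3\times 3$ symmetric matrix $M$ of linear forms, and ${\rm VSP}(f) \cong \PP^2$ is realized as the net of quadrics spanned by these minors — this is the Dolgachev–Kanev description in Table~\ref{tab:VSPs of ternary forms}. A point of ${\rm VSP}(f)$ is a line in $\PP^2$ (a $\PP^1$ of quadrics inside $f^\perp_2$) whose base locus is the triple of points appearing in (\ref{eq:waring}); that triple is real precisely when the binary cubic cut out on that line by the three minors of $M$ (equivalently, by $\det$ of a suitable $2\times 3$ submatrix with linear entries) has three real roots. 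So ${\rm SSP}(f)_\RR$ is the locus in $\PP^2_\RR$ where a certain binary cubic form, whose coefficients are polynomials on $\PP^2$, has nonnegative discriminant; its algebraic boundary is the discriminant curve $\Delta = 0$.

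Next I would compute the degree and singularities of this discriminant curve. The discriminant of a binary cubic is a degree-$4$ polynomial in the four coefficients; tracking how these coefficients depend quadratically (or with the appropriate degree) on the coordinates of $\PP^2$ via the linear matrix $M$ gives the degree of the plane curve $\Delta$ — one expects a sextic, and I would pin this down by a direct count using the Schläfli-type formula of Corollary~\ref{cor:schlafli} with $m=2$ (discriminant for ideals of three points in $\PP^2$, degree $12\binom{3}{3}=12$, but after the cancellations forced by the special structure of ${\rm VSP}(f)=\PP^2$ this drops to $6$, exactly as the degree-$20$ hyperdeterminant dropped in Example~\ref{ex:x^2+y^2-z^2}). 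For the nine cusps: these should be the images of the nine points where the binary cubic acquires a triple root, i.e. where two of the three points of the decomposition collide \emph{and} the third collides with them — equivalently the ramification of the natural $3:1$-type covering data. I would identify these nine points with a classical configuration (the nine points related to the Hessian / the nine inflection-type points, or the nine singular members of the net of quadrics), check each is an ordinary cusp by a local computation of $\Delta$, and argue there are no other singularities by a genus or Plücker count: an irreducible sextic with nine cusps has the right invariants (arithmetic genus $10$, minus $9$ for the cusps, gives geometric genus $1$), which is a strong consistency check.

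Finally I would deduce the topological statement. The sextic $\Delta=0$ is a real curve of geometric genus $1$ with nine cusps; its real points divide $\PP^2_\RR$ into the regions where the binary cubic has $3$ real roots versus $1$. Using the real structure of the nine-cusp configuration one shows the real locus of $\Delta$ has either one or two connected components (an oval, possibly plus a pseudoline), cutting $\PP^2_\RR$ either into a disk (the ${\rm SSP}$ region) and a Möbius strip (the complement), or into a disk, a Möbius strip, \emph{and} a second disk — the extra disk being a second component of ${\rm SSP}(f)_\RR$. Which case occurs is governed by a discrete invariant of $f$ (the sign pattern of the catalecticant / the configuration of real inflection points of the associated plane cubic), and I would tabulate this in Table~\ref{tab:behavior}, matching the two cases in the statement.

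\textbf{Main obstacle.} The hard part will be the singularity analysis of $\Delta$: proving that the sextic is \emph{irreducible}, that its only singularities are \emph{exactly nine cusps}, and that no further (e.g. nodal or higher) singularities hide in the cancellations. This requires either a clean synthetic identification of the nine points with a known configuration together with a uniform local normal-form computation, or a careful Plücker/genus bookkeeping argument to rule out anything else; getting the degree drop from $12$ to $6$ honestly accounted for (rather than just observed on an example) is the delicate bottleneck, and I would expect to lean on the explicit symmetric-matrix normal form for $f^\perp$ to make the computation tractable.
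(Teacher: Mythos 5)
Your setup contains several errors that would derail the argument. For a general ternary cubic, the apolar ideal $f^\perp$ is a complete intersection of exactly \emph{three} quadrics (the Hilbert function of $S/f^\perp$ is $(1,3,3,1)$, so $\dim f^\perp_2 = 3$), not the $2\times 2$ minors of a $3\times 3$ symmetric matrix of linear forms — that determinantal description does not apply here. Consequently ${\rm VSP}(f) \simeq \PP^2$ is the Grassmannian ${\rm Gr}(2, f^\perp_2)$ of \emph{pencils} of conics inside this net, and the base locus of a general such pencil is $4$ points (Bezout), not a triple. The decomposition (\ref{eq:waring}) has $r = R(3) = 4$ summands, so the object to make real is a $4$-point fiber, not the roots of a binary cubic. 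Your appeal to Corollary~\ref{cor:schlafli} with $m=2$ gives a discriminant for ideals of $\binom{3}{2}=3$ points, which is the wrong count; the expected degree $12$ is not what's being specialized here, and the "drop to $6$" is not something you can get from that route.

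The paper's proof hinges on an idea you do not have: the net $f^\perp_2$ defines a $4{:}1$ branched cover $F:\PP^2 \to \PP^2$, $(x:y:z) \mapsto (q_0:q_1:q_2)$, whose fibers over $\PP^2 \setminus B$ are exactly the rank-$4$ decompositions, so ${\rm SSP}(f)_\RR$ is the locus of fully real fibers and its algebraic boundary is the branch locus $B$. The sextic-with-nine-cusps claim then follows in one line: $B$ is the dual curve of the Cayleyan cubic $C(f)$ (the Jacobian of the net), and the dual of a smooth plane cubic is an irreducible sextic with nine cusps. You try to reconstruct the nine cusps and irreducibility from genus/Pl\"ucker bookkeeping of an unknown discriminant — this is both harder and unsupported once the wrong degree count is removed. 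Likewise the disk/M\"obius dichotomy in the paper comes from a concrete case analysis after reducing $f$ to the Hesse normal form $x^3+y^3+z^3+\lambda xyz$ and tracking the hyperbolicity (pseudoline vs.\ pseudoline plus oval) of the three cubics $f$, $H(f)$, $C(f)$ as $\lambda$ varies; your "the real locus of $\Delta$ has either one or two connected components" is not established and in fact requires exactly this case-by-case analysis (see Table~\ref{tab:behavior} and Figures~\ref{fig:case1}--\ref{fig:case3}). In short: the covering map $F$ and the identification $B = C(f)^\vee$ are the missing key ideas, and the symmetric-matrix/three-point setup is not salvageable.
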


Our point of departure is the 
following fact which is well-known,
e.g.~from \cite[\S 5]{Ban}  or \cite{BBO}.

\begin{proposition}\label{prop:generic}
The real rank of a general ternary cubic is $R(3) = 4$, so it agrees with the
complex rank. Hence, the closure of
$\,\mathcal{R}_3$ is all of $\RR[x,y,z]_3$, and its
boundary $\partial \mathcal{R}_3$ is empty.
\end{proposition}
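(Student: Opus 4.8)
The plan is to sandwich $\rrk(f)$ for a general cubic $f$ between $4$ and $4$, and then read off the statements about $\mathcal{R}_3$. For the lower bound, recall that $\crk(f)\le\rrk(f)$ always, and that $\{\,f:\crk(f)\le 3\,\}$ is contained in the Aronhold hypersurface $\sigma_3(v_3(\PP^2))\subsetneq\PP^9$, the zero set of a single (quartic) invariant with real coefficients. Its real points are Euclidean nowhere dense, so for $f$ outside a nowhere-dense set one has $\crk(f)=R(3)=4$ by (\ref{eq:genrank}), hence $\rrk(f)\ge 4$.

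For the upper bound I would exhibit one real cubic of real rank $4$ at which the decomposition map is a submersion, and then invoke openness. Since every real cube $\lambda\ell^3$ equals $(\lambda^{1/3}\ell)^3$, the set $\{\,f:\rrk(f)\le 4\,\}$ is precisely the image of the polynomial map $\Psi\colon(\RR[x,y,z]_1)^4\to\RR[x,y,z]_3$, $(\ell_1,\dots,\ell_4)\mapsto\ell_1^3+\cdots+\ell_4^3$. Its differential at $(\ell_1,\dots,\ell_4)$ has image $\sum_{i=1}^4\ell_i^2\cdot\RR[x,y,z]_1$; evaluating at $(\ell_1,\ell_2,\ell_3,\ell_4)=(x,y,z,x{+}y{+}z)$, the first three summands already span all cubic monomials except $xyz$, while $(x{+}y{+}z)^2x$ contributes $xyz$ modulo those nine monomials, so the differential is surjective there. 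By the inverse function theorem $\Psi$ is an open map near this point, so $\{\,f:\rrk(f)\le 4\,\}$ contains a Euclidean ball around $x^3+y^3+z^3+(x{+}y{+}z)^3$. Intersecting with the dense open set where $\crk(f)=4$, we obtain an open set of cubics with $\rrk(f)=4$; in particular $4$ is a typical real rank.

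It remains to promote this to density of $\mathcal{R}_3=\{\,f:\rrk(f)=4\,\}$ — equivalently, to rule out an open region of ternary cubics of real rank $5$, i.e.\ to show that $4$ is the \emph{only} typical real rank. This is the substantive input and is contained in Banchi's classification \cite[\S5]{Ban} (see also \cite{BBO}); I would either quote it or reprove it through the apolar geometry. For smooth $f$ the ideal $f^\perp$ is a complete intersection of three conics spanning a net $N\subset\RR[x,y,z]_2$, and ${\rm VSP}(f)\cong\PP(N^{*})\cong\PP^2$ (Dolgachev--Kanev, Table~\ref{tab:VSPs of ternary forms}); a real rank-$4$ decomposition corresponds to a pencil in $N$ whose four base points are distinct and real, and the set of such pencils is open in $\PP^2$, so the task reduces to exhibiting one real decomposition for a dense family of smooth cubics — a one-time check, e.g.\ in Hesse form $x^3+y^3+z^3+\lambda xyz$ — after which ${\rm GL}(3,\RR)$- and scaling-invariance of $\mathcal{R}_3$ spreads the property around. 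Granting density, $\overline{\mathcal{R}_3}=\RR[x,y,z]_3$; this set is open, it is its own interior, and therefore $\partial\mathcal{R}_3$, the difference of the two, is empty.

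The Jacobian computation and the identification of $\{\,\crk\le 3\,\}$ with a hypersurface are routine. The genuine obstacle is the density claim of the third paragraph: non-emptiness of the interior of $\{\,\rrk\le 4\,\}$ follows formally from $\sigma_4(v_3(\PP^2))=\PP^9$, but excluding an open set of real rank $5$ is special to ternary cubics and requires the net-of-conics analysis (essentially the $d=3$ part of Banchi's work); it is not a soft dimension count, and one must be careful not to invoke Theorem~\ref{thm:diskmobius} here, since that theorem is proved later using Proposition~\ref{prop:generic}.
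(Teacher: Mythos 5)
Your third paragraph is, once completed, the paper's first proof: reduce a smooth real cubic to the real Hesse pencil $x^3+y^3+z^3+\lambda xyz$ by a real linear change of coordinates, then exhibit Banchi's explicit rank-$4$ real decomposition, which gives $\rrk(f)\le 4$ for \emph{every} smooth real cubic; combined with $\crk(f)=4$ off the Aronhold hypersurface, this yields the proposition. Two observations on the rest of your write-up. Paragraphs 1 and 2 (Aronhold lower bound and the submersion computation at $(x,y,z,x{+}y{+}z)$) only establish that $4$ is \emph{a} typical real rank, i.e.\ that $\{\rrk=4\}$ has nonempty interior; that follows formally from $\sigma_4(\nu_3(\PP^2))=\PP^9$ and, as you note yourself at the end, is strictly weaker than density, so the paper dispenses with them and goes directly to Hesse form. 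More importantly, your sketch in paragraph 3 glosses over two points: (i) the claim that ${\rm GL}(3,\RR)$- and scaling-invariance ``spreads the property around'' from the Hesse pencil to all smooth cubics is precisely the nontrivial real-form statement that every smooth real cubic lies in the ${\rm PGL}(3,\RR)$-orbit of a real member of the Hesse pencil --- the paper records this as a classical fact requiring citation (cf.\ Remark~\ref{rem:Hessereal} and \cite[Proposition 2.2]{SilvermanAdvanced}), not something one gets for free from the complex Hesse normal form; (ii) the openness of ``good pencils'' inside a fixed ${\rm VSP}(f)\cong\PP^2$ is a red herring --- it is irrelevant to reducing density over the space of cubics to the Hesse family and should be cut. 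For your interest, the paper also gives a second, Hesse-free proof via De Paolis: choose a real line $\ell_1$ meeting the Hessian cubic $H(f)$ in three distinct real points, take the rank-$4$ decomposition $f=\ell_1^3+\ell_2^3+\ell_3^3+\ell_4^3$, and deduce reality of $\ell_2,\ell_3,\ell_4$ by tracking the singular point of the polar conic $\partial_p(f)$ at $p=\ell_1\cap\ell_2$; this avoids the real-form input entirely and you may find it cleaner if you want a self-contained argument.
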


\begin{proof}
Every smooth cubic curve in 
the complex projective plane $\PP^2$ can be transformed,
by an invertible linear transformation $\tau \in {\rm PGL}(3,\CC)$,
into the {\em Hesse normal form} (cf.~\cite{AD}):
\begin{equation}
\label{eq:hesse} f \,\,=\,\, x^3 + y^3 + z^3 + \lambda xyz . 
\end{equation}
Suppose that the given  cubic is defined over $\RR$.
It is known classically that the matrix $\tau$ can be chosen
to have real entries. In particular,  the parameter $\lambda$ will be real.
Also,   $\lambda \not= -3$; otherwise
the curve would be singular. Banchi \cite{Ban} observed that
$\,24 (\lambda+3)^2 f\,$ is equal to
$$ \bigl[(6 + \lambda) x - \lambda y - \lambda z \bigr]^3  \,+\, 
\bigl[ (6 + \lambda) y - \lambda x - \lambda z \bigr]^3  \, + \,
\bigl[ (6 + \lambda) z - \lambda x - \lambda y \bigr]^3  \, + \,\,
\lambda (\lambda^2 + 6 \lambda + 36) \bigl[ x+y+z \bigr]^3 .
$$
By applying $\tau^{-1}$, one obtains
the decomposition for the original cubic.
The entries of the transformation matrix $\tau \in {\rm PGL}(3,\RR)$
can be written in radicals
in the coefficients of~$f$. 
The corresponding Galois group is solvable and has order $432$. It
is the automorphism group of the Hesse pencil; see e.g.~\cite[Remark 4.2]{AD} or 
\cite[Section 2]{CS}.
\end{proof}

\begin{remark}\label{rem:Hessereal} \rm
The Hesse normal form (\ref{eq:hesse}) 
is well-suited for this real structure. 
For any fixed isomorphism class of a real elliptic
curve over $\CC$, there are two isomorphism classes over $\RR$,
by  \cite[Proposition 2.2]{SilvermanAdvanced}.
We see this by considering
the j-invariant of the Hesse curve:
\begin{equation}
\label{eq:jinv}
 j (f)\,\, = \,\,\, -{\frac {\lambda^3 \left( \lambda -6 \right)^3 \left( \lambda^2+6 \lambda +36
 \right)^3}{ \left( \lambda+3 \right)^3 \left( \lambda^2-3\lambda +9 \right)^3}} .
 \end{equation}
For a fixed real value of $j(f)$, this equation has two real solutions
$\lambda_1$ and $\lambda_2$. These two elliptic curves
are isomorphic over $\CC$ but not over $\RR$.
They are distinguished by the sign of the degree $6$ invariant $T$
of ternary cubics, which takes the following value  for the Hesse~curve:
\begin{equation}
\label{eq:Tdeg6}
 T(f) \,\,=\,\,1-\frac{4320\lambda^3+8\lambda^6}{6^6}.
 \end{equation}
If $T(f)  =0$ then the two curves
  differ by the sign of the Aronhold invariant.
  This proves that any real smooth cubic is isomorphic over $\RR$ to exactly one element of the Hesse pencil. 

An illustrative example is  the Fermat curve $   x^3+y^3+z^3$.
 It is unique over $\CC$, but it has
    two distinct real models, corresponding to $\lambda = 0$ or $6$.
 The case $\lambda=6$ is isomorphic over $\RR$ to $g = x^3+(y+iz)^3+(y-iz)^3$.
 This real cubic satisfies  $\crk(g) = 3$ but $\rrk(g) = 4$. Here,
the real surface $\,{\rm VSP}(g)_\RR\,$ is non-empty,
but its semialgebraic subset
 $\,{\rm SSP}_\RR(g)\,$ is empty.
\end{remark}

We now construct the isomorphism ${\rm VSP}(f) \simeq \PP^2$  for ternary cubics $f$ 
 as shown in Table~\ref{tab:VSPs of ternary forms}.
 The apolar ideal $f^\perp$ is  a complete intersection
generated by three quadrics $q_0,q_1,q_2$. We denote this {\it net} of quadrics by 
$f^\perp_2$. Conversely, any such 
complete intersection determines a unique cubic $f$. The linear system
$f^\perp_2$ defines a branched $4:1$ covering of projective planes:
$$ F  :\,\PP^2\rightarrow \PP^2, 
\,(x:y:z) \mapsto \bigl( q_0(x,y,z): q_1(x,y,z): q_2(x,y,z) \bigr). $$
We regard $F$ as a map from 
$\PP^2 $ to the Grassmannian $ {\rm Gr}(2,f^\perp_2)$ 
of $2$-dimensional subspaces of $f^\perp_2 \simeq \CC^3$. It
takes a point $\ell$ to the pencil of quadrics in $f^\perp_2$ 
that vanish at $\ell$. The fiber of $F$ is the
base locus of that pencil. Let $B\subset \PP^2$ be the branch
locus of $F$. This is a curve of degree six. 
The fiber of $F$ over any point in $\PP^2 \backslash B$ consists
of four points $\ell_1,\ell_2,\ell_3,\ell_4$, and these  determine decompositions
$\,f = \ell_1^3 + \ell_2^3 + \ell_3^3 + \ell_4^3$.
In this manner, the rank $4$ decompositions of $f$ are
in bijection with the points of $\PP^2\backslash B$.
We conclude that ${\rm VSP}(f) = {\rm Gr}(2,f^\perp_2) \simeq \PP^2$.

\begin{proof}[Second proof of Proposition \ref{prop:generic}]
We follow a geometric argument, due to De Paolis in
1886, as presented in \cite[\S 5]{Ban} and \cite[\S 3]{BBO}.
Let $H(f) $ be the Hessian of $f$, i.e.~the $3 \times 3$ determinant of
second partial derivatives of $f$.
We choose a real line 
$\ell_1$ that intersects the cubic $H(f)$ in three distinct real points.
The line $\ell_1$ is identified with its defining linear form, and hence
with a point in the dual $\PP^2$. That $\PP^2$ is the domain of $F$.
We may assume that $F(\ell_1)$
is not in the branch locus $ B$. There exists a decomposition
$\,f = \ell_1^3 + \ell_2^3 + \ell_3^3 + \ell_4^3$,
where $\ell_2,\ell_3,\ell_4 \in \CC[x,y,z]_1$.
We claim that the $\ell_i$ have real coefficients.
Let $\partial_p(f)$ be the    polar conic of  $f$ with respect to
 $p  = \ell_1 \cap \ell_2$. This conic is  a $\CC$-linear
   combination of $\ell_3^2$ and $\ell_4^2$. It is singular at 
the point   $\ell_3 \cap \ell_4$.
In particular, $p$ belongs to $\ell_1$ and to the cubic $ H(f)$. Hence, 
$p$ is a real point, the conic  $\partial_p (f)$ is real, and
its singular point $\ell_3 \cap \ell_4$ is real.
The latter point is distinct from $p = \ell_1 \cap \ell_2$ because $f$ is smooth.
After relabeling, all pairwise intersection
points of the lines $\ell_1,\ell_2,\ell_3,\ell_4$ are distinct and real.
Hence the lines themselves are real.
\end{proof}

The key step in the second proof is the choice of
 the  line $\ell_1$. In practise,
 this is done by sampling
linear forms $\ell_1 $ from $ \RR[x,y,z]_1$ until
 $H(f) \cap \ell_1$  consists of three real points $p$.
 For each of these, we compute
the singular point of the conic $\partial_p (f)$
and connect it to $p$ by a line.
This gives the lines
$\ell_2,\ell_3,\ell_4 \in \RR[x,y,z]_1$. 
The advantage of this method is that the
coordinates of the $\ell_i$ live in a cubic
extension, and are easy to express in terms of radicals.

In order to choose the initial line $\ell_1$ more
systematically, we must
understand the structure of ${\rm SSP}(f)_\RR$.
This is our next topic.
By definition, ${\rm SSP}(f)_\RR$ is the
locus of real points $p \in \PP^2 = {\rm Gr}(2,f^\perp_2)$ for which the
fiber $F^{-1}(p)$ is fully real.
Such points $p$ have the form $p = F(\ell)$
where $\ell$ is a line that meets the Hessian
cubic $H(f)$ in three distinct real points.

\begin{example} \label{Explicit example of cubic}  \rm
Let $f$ be the Hesse cubic (\ref{eq:hesse}).
The net $f^\perp_2 $ is spanned by the three quadrics 
$$ 
q_0 = \lambda x^2 - 6 yz,\,\,\,
q_1 = \lambda y^2 - 6 xz,\,\,\,{\rm and} \,\,
q_2 = \lambda z^2 - 6 xy. $$
These quadrics define the map $F: \PP^2 \rightarrow \PP^2$.
We use coordinates $(x:y:z)$ on the domain $\PP^2$
and coordinates $(a:b:c)$ on the image $\PP^2$.
The branch locus $B$ of $F$ is the sextic curve
$$  \begin{matrix}
177147 \lambda^4 (a^6+b^6+c^6) -
(1458 \lambda^8-157464 \lambda^5+4251528 \lambda^2) (a^4 b c+
ab^4c+abc^4) \\
        +(36 \lambda^{10}-5832 \lambda^7-39366 \lambda^4
        -5668704 \lambda) (a^3b^3+a^3c^3+b^3c^3) \\
          -(\lambda^{12}-216\lambda^9-61236 \lambda^6+3621672 \lambda^3+8503056)a^2b^2c^2.
\end{matrix}
$$
We regard the Hessian $H(f)$ as a curve in the image $\PP^2$.
This cubic curve equals
\begin{equation}
\label{eq:hessian}
H(f)\,\, = \,\, a^3+b^3+c^3-\frac{\lambda^3+108}{3\lambda^2}abc.
\end{equation}
The ramification locus of the map $F$ is the
 Jacobian of the net of quadrics:
\begin{equation}
\label{eq:jacobian}
C(f) \,\, = \,\, {\rm det} \begin{pmatrix}
 \frac{\partial q_0}{\partial x} &   \frac{\partial q_0}{\partial y} & \frac{\partial q_0}{\partial z} \\
 \frac{\partial q_1}{\partial x} &   \frac{\partial q_1}{\partial y} & \frac{\partial q_1}{\partial z} \\
 \frac{\partial q_2}{\partial x} &   \frac{\partial q_2}{\partial y} & \frac{\partial q_2}{\partial z} 
\end{pmatrix}
\,\, = \,\,
x^3 + y^3+z^3+ \frac{54-\lambda^3}{9\lambda}xyz. 
\end{equation}
This cubic is known classically as the {\em Cayleyan} of $f$;
see \cite[Prop.~3.3]{AD} and \cite[eqn.~(3.27)]{Dol}.

We note that the dual of the cubic $C(f)$ is the sextic $B$. 
The preimage of $B = C(f)^\vee$ under $F$ is a non-reduced curve
of degree $12$. It has multiplicity $2$ on the Cayleyan $C(f)$.
The other component is the sextic curve 
 dual to the Hessian $H(f)$.
That sextic equals
$$ \begin{matrix}
H(f)^\vee \, = \,
- 2187\lambda^8 (x^6+y^6+z^6)+(162\lambda^{10}+34992\lambda^7+1889568\lambda^4)(x^4yz+xy^4z+xyz^4)\\
+(-12\lambda^{11}+486\lambda^8-41990\lambda^5-15116544\lambda^2)(x^3y^3+x^3z^3+y^3z^3)+\\
+(\lambda^{12}-2484\lambda^9-244944\lambda^6+5038848\lambda^3+136048896)x^2y^2z^2. 
\end{matrix}
$$
So, we constructed four curves:
the cubic $C(f)$ and the sextic $H(f)^\vee$  in the domain $\PP^2 = \{(x \!:\! y \!:\! z)\}$,
and  the cubic $H(f)$ and the sextic $B = C(f)^\vee$ in the image
 $\PP^2 = \{(a \!: \! b \! : \! c)\}$.
\hfill $\diamondsuit$
\end{example}

A smooth cubic $f$ in the real projective plane
is either a connected curve, namely a pseudoline,
or it has two connected components, namely a pseudoline
and an oval. In the latter case,  $f$ is {\em hyperbolic}.
The cubic in the Hesse pencil (\ref{eq:hesse})
is singular for $\lambda = -3$, it
is hyperbolic if $\lambda <-3$, and it is not 
hyperbolic if $\lambda > -3$. This trichotomy is the key
for understanding  ${\rm SSP}(f)_\RR$. 
However, we must consider this trichotomy also for the Hessian cubic $H(f)$
in (\ref{eq:hessian}) and for the Cayleyan cubic $C(f)$ in (\ref{eq:jacobian}).
The issue is whether
their Hesse parameters
$-\frac{\lambda^3+108}{3\lambda^2}$ and $\frac{54-\lambda^3}{9\lambda}$
are bigger or smaller than the special value $-3$.
The values at which the behavior changes 
are $\lambda = -3, 0, 6$. Table \ref{tab:behavior}
summarizes the four possibilities.

Three possible hyperbolicity behaviors are
exhibited by the three cubics $f,H(f),C(f)$. One of
these behaviors leads to two different types, 
seen in the second and fourth column in Table \ref{tab:behavior}.
These two types are distinguished by the fibers
of the map $F: \PP^2 \rightarrow \PP^2$.
These fibers are classified by the 
connected components in the complement of the Cayleyan $C(f)$.
There are three such components if $C(f)$ is hyperbolic and two otherwise.
The fifth row in Table~\ref{tab:behavior} shows the
number of real points over these components.
For $6 < \lambda$, there are no real points over one component;
here, the general fibers have $4,2$ or $0$ real points.
However, for $-3 < \lambda < 0$, all fibers contain real points;
here, the general fibers have $4,2$ or $4$ real points.

\vspace{-0.4in}

\begin{center}
\begin{table}
\begin{tabular}{| l | l | l | l | l | r |}
  \hline
     & $\lambda < -3$ & $-3 <\lambda < 0$ & $0 < \lambda < 6$ &    $ 6 < \lambda$ \\
     \hline 
     $f$  & hyperbolic & not hyperbolic & not hyberbolic & not hyperbolic \\
  \hline
   $H(f)$ & not hyperbolic $\!\!\!\!$ & hyperbolic & hyperbolic & hyperbolic \\ 
  \hline
  $C(f)$  & hyperbolic  & hyperbolic & not hyperbolic & hyperbolic \\
  \hline
$\#  F^{-1}(\bullet)_\RR$ & $4,2,0$ & $4,2,4$ & $4,2$ & $4,2,0$  \\
  \hline
  ${\rm SSP}(f)_\RR$ & disk & disk $\sqcup$ M\"obius strip
   & disk & disk \\
  \hline
  $\!\!\!\!\!$
  \begin{small}
 Oriented Matroid 
 \end{small}
  $\!\!\!\!\!$
  &$(+,+,+,+)$&$\!(+,+,+,+)$ $\sqcup$ $(+,+,-,-)\!\! $ & $(+,+,+,-)$&$(+,+,+,+)$  \\
\hline
\end{tabular}
\caption{Four possible types of a real cubic $f$ of form (\ref{eq:hesse}) and its
 quadratic map $F:\PP^2 \rightarrow \PP^2$.
 \label{tab:behavior}}
\end{table}
\end{center}

\begin{figure}
    \centering
    \begin{subfigure}[b]{0.3\textwidth}
        \includegraphics[scale=0.3]{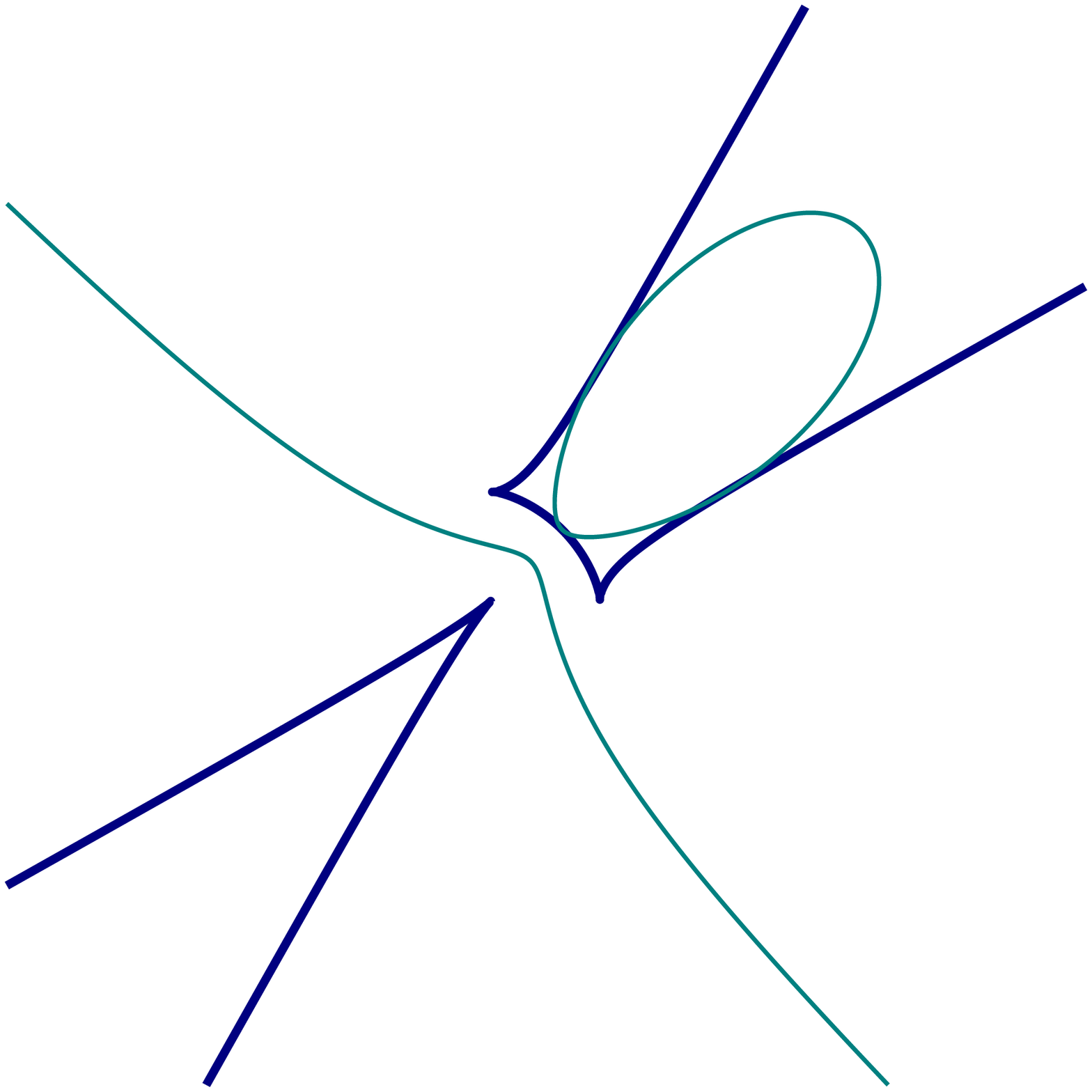}
        \caption{Domain  $\PP^2 = \{(x:y:z)\}$}
        \label{fig: Domain}
    \end{subfigure}
    ~ 
      \hskip 3cm
    \begin{subfigure}[b]{0.3\textwidth}
        \includegraphics[scale=0.3]{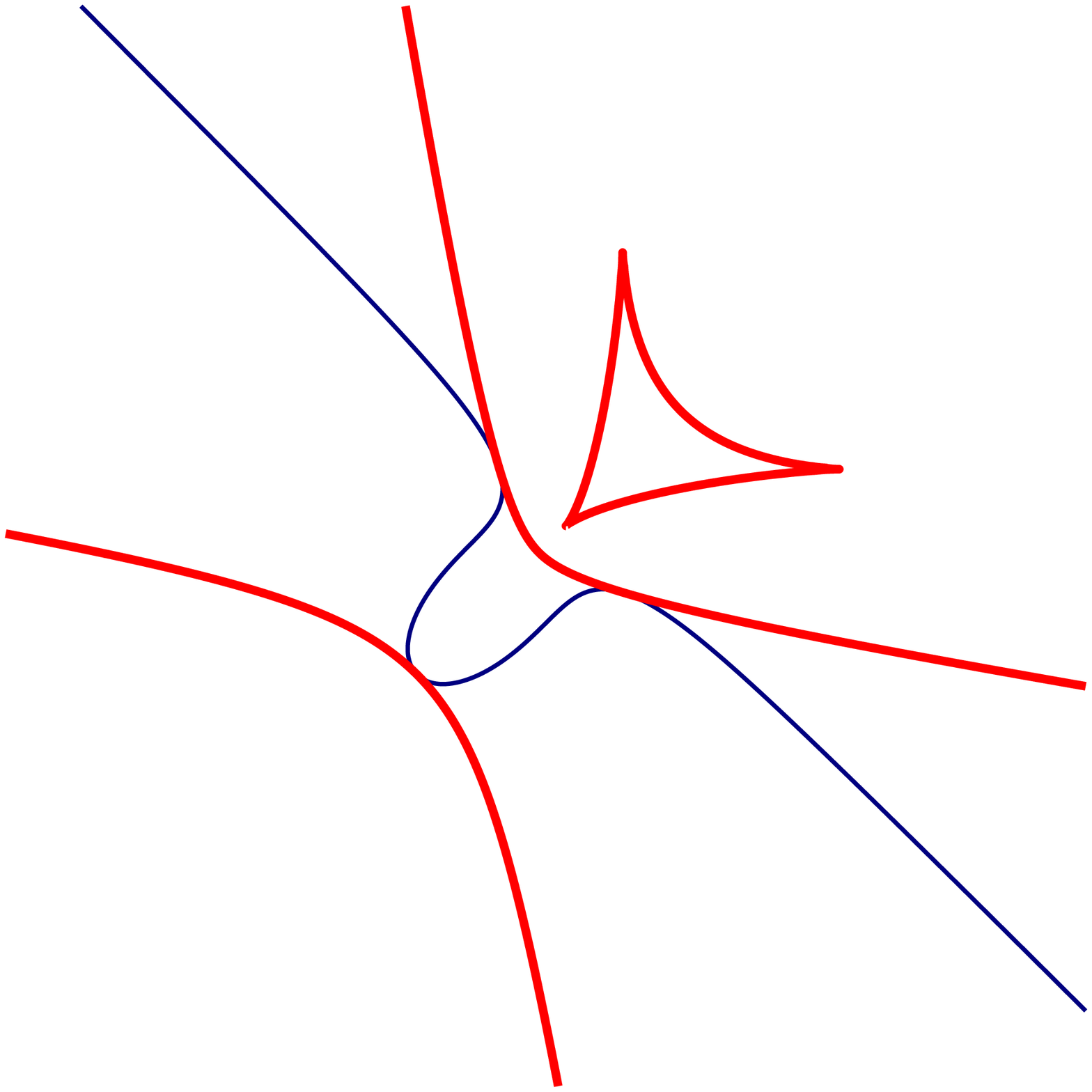}
        \caption{Image $\PP^2 = \{(a:b:c)\}$}
        \label{fig: Codomain}
    \end{subfigure}
    \vspace{-0.1in}
    \caption{\label{fig:case1} Ramification and branching for  $\lambda < -3$. 
The triangular region in (b) is ${\rm SSP}(f)_\RR$.}
\end{figure}

\begin{figure}
\centering
    \begin{subfigure}[b]{0.3\textwidth}
        \includegraphics[scale=0.3]{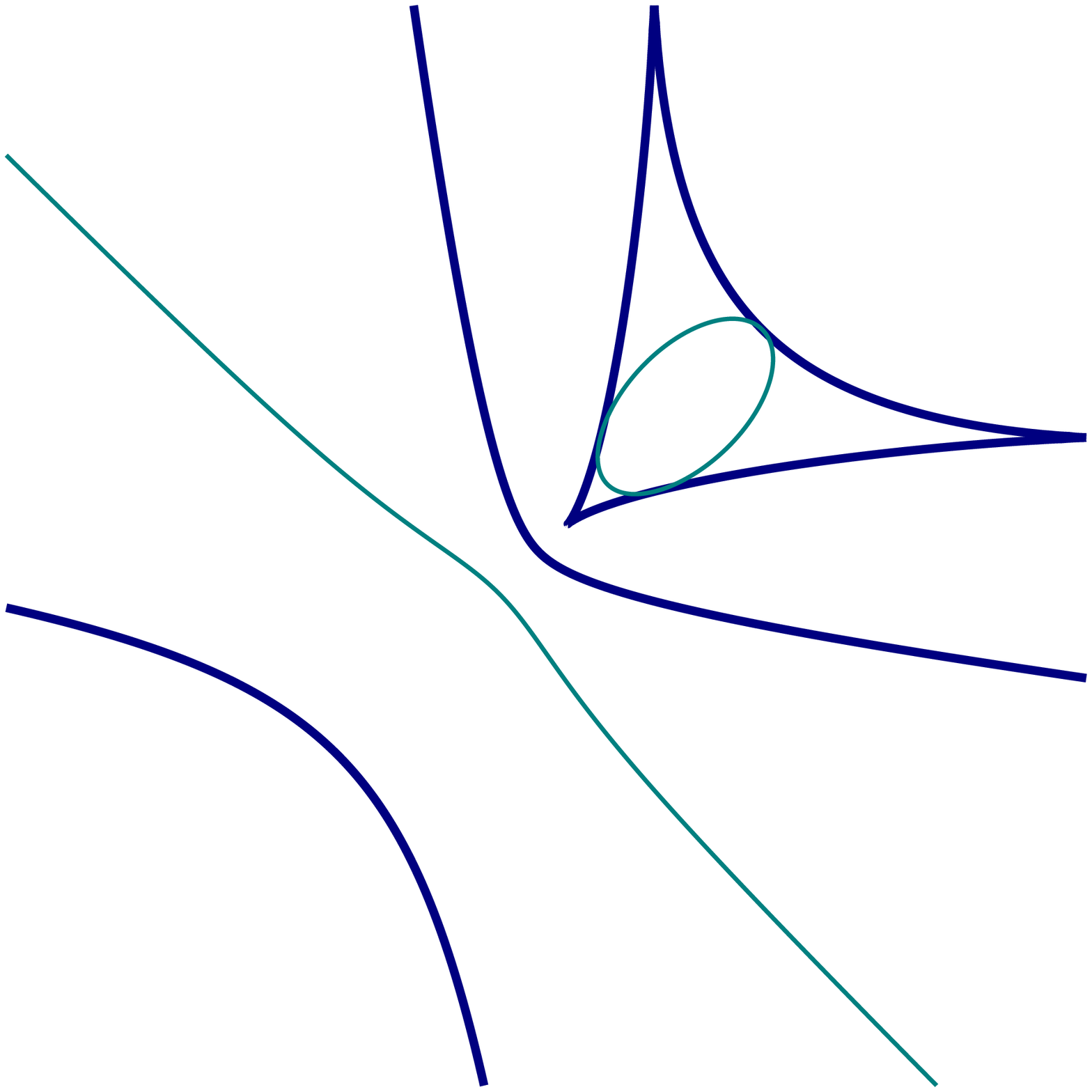}
        \label{fig: Domain}
    \end{subfigure}
    ~ 
      \hskip 3cm
    \begin{subfigure}[b]{0.3\textwidth}
        \includegraphics[scale=0.3]{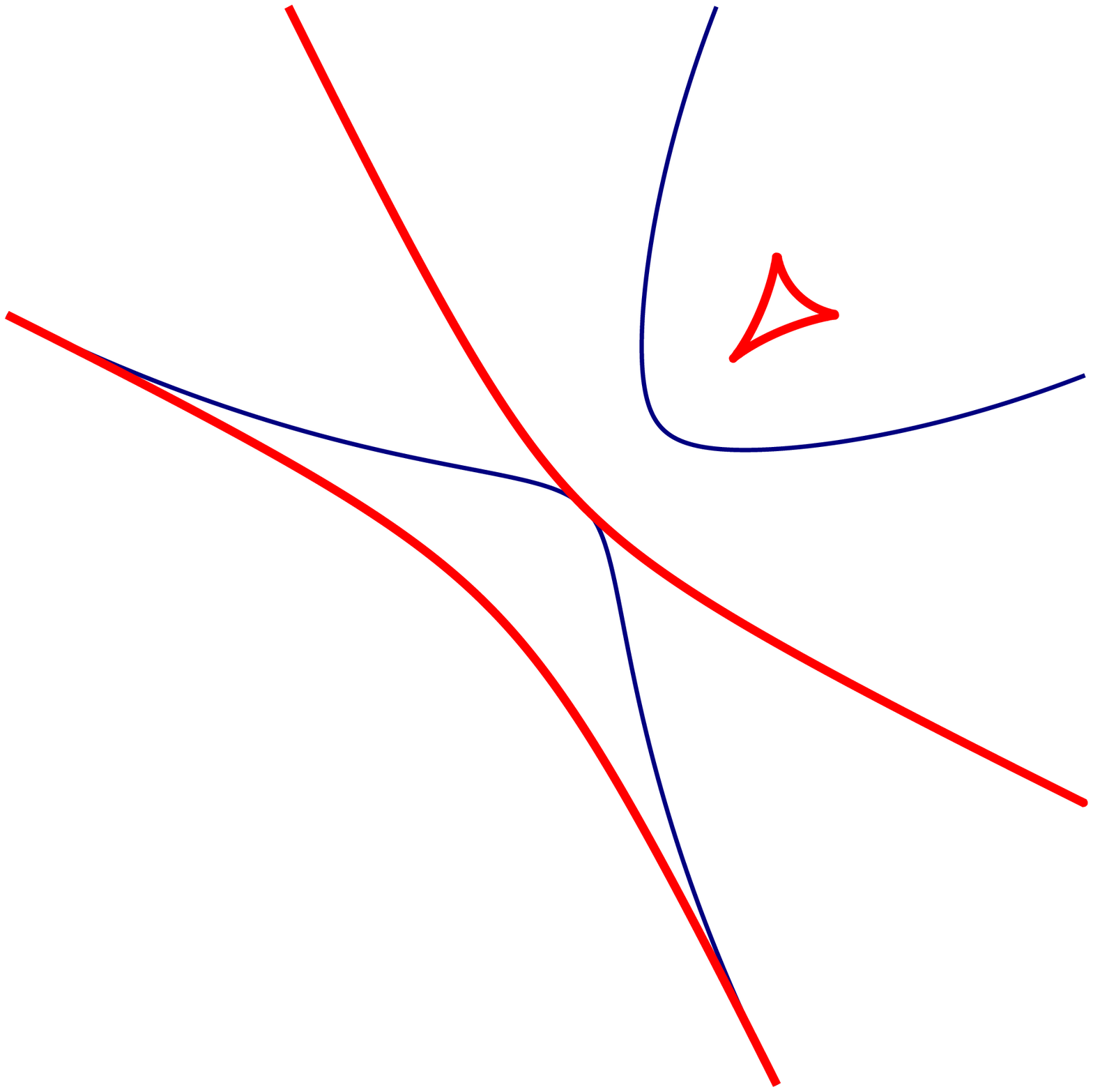}
         \label{fig: Codomain}
    \end{subfigure}
    \vspace{-0.3in}
 \caption{\label{fig:case2} Ramification and branching for  $-3 < \lambda < 0$. 
 The locus ${\rm SSP}(f)_\RR$ is bounded by the (red) sextic curve on
 the right.
 It consists of   the triangular disk  and the M\"obius strip.}
\end{figure}

\begin{proof}[Proof of Theorem \ref{thm:diskmobius}]
After a coordinate change  by a matrix $\tau \in {\rm PGL}(3,\RR)$,
we~can assume that the cubic $f$ is in the Hesse pencil (\ref{eq:hesse}).
Hence so are the associated cubics $H(f)$ and $C(f)$.
If we change the parameter $\lambda$
so that all three cubics remain smooth, 
then the real topology of the map $F$ is unchanged.
This gives four different types for ${\rm SSP}(f)_\RR$,
the locus of fully real fibers.
The sextic $B$ divides the  real projective plane
into  two or three connected components,
depending on whether its dual cubic $C(f) = B^\vee $ is hyperbolic or~not.

Figures~\ref{fig:case1}, \ref{fig:case2}, \ref{fig:case4} and \ref{fig:case3}
illustrate the behavior of the map $F$ in the four cases
given by the columns in Table \ref{tab:behavior}.
Each figure shows the plane $\PP^2$ with coordinates
$(x:y:z)$ on the left and the  plane $\PP^2$ with coordinates $(a:b:c)$
on the right. 
The map $F$ takes the left plane onto the right plane.
 The two planes are dual to each other.
In particular, points on the left correspond to lines on the right.
Each of the eight drawings shows a cubic curve and a sextic curve.
The two curves on the left are dual to the two curves on the right.

In each right diagram, the thick red curve is the branch locus $B$
and the thin blue curve is the Hessian $H(f)$.
In each left diagram, the turquoise curve is the Cayleyan
$C(f) = B^\vee$, and the thick blue curve is the
sextic $H(f)^\vee$ dual to the Hessian.
Each of the eight cubics has either two or one
connected components, depending on whether the
curve is hyperbolic or not. The complement of
the cubic in $\PP^2_\RR$ has
three or two connected components.
The diagrams verify the
hyperbolicity behavior stated in the third and fourth row
of Table~\ref{tab:behavior}.
Note that each sextic curve has the same
number of components in $\PP^2_\RR$ as its dual cubic.

\begin{figure}
    \centering
    \begin{subfigure}[b]{0.3\textwidth}
        \includegraphics[scale=0.3]{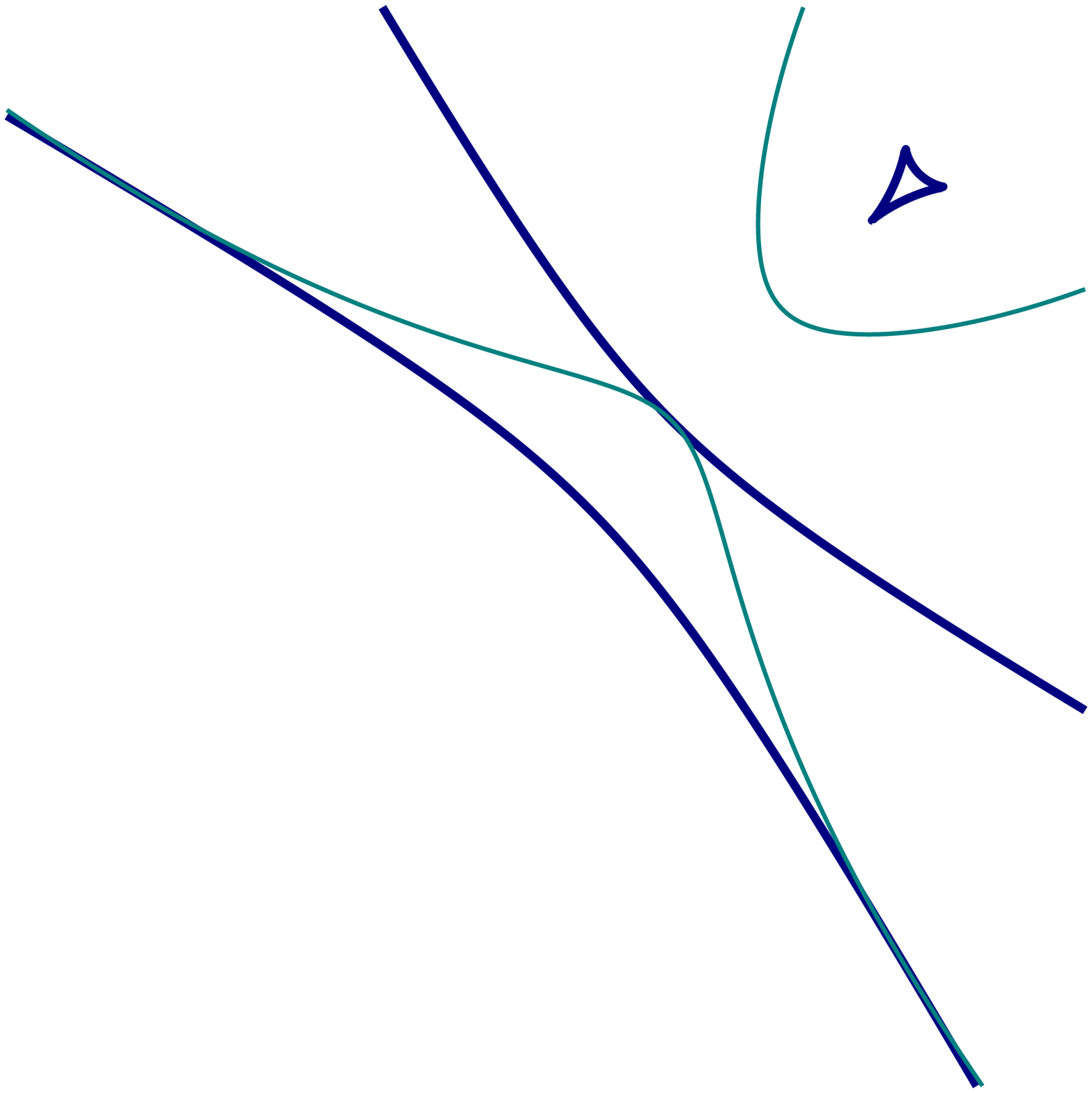}
        \label{fig: Domain}
    \end{subfigure}
    ~ 
      \hskip 3cm
    \begin{subfigure}[b]{0.3\textwidth}
        \includegraphics[scale=0.3]{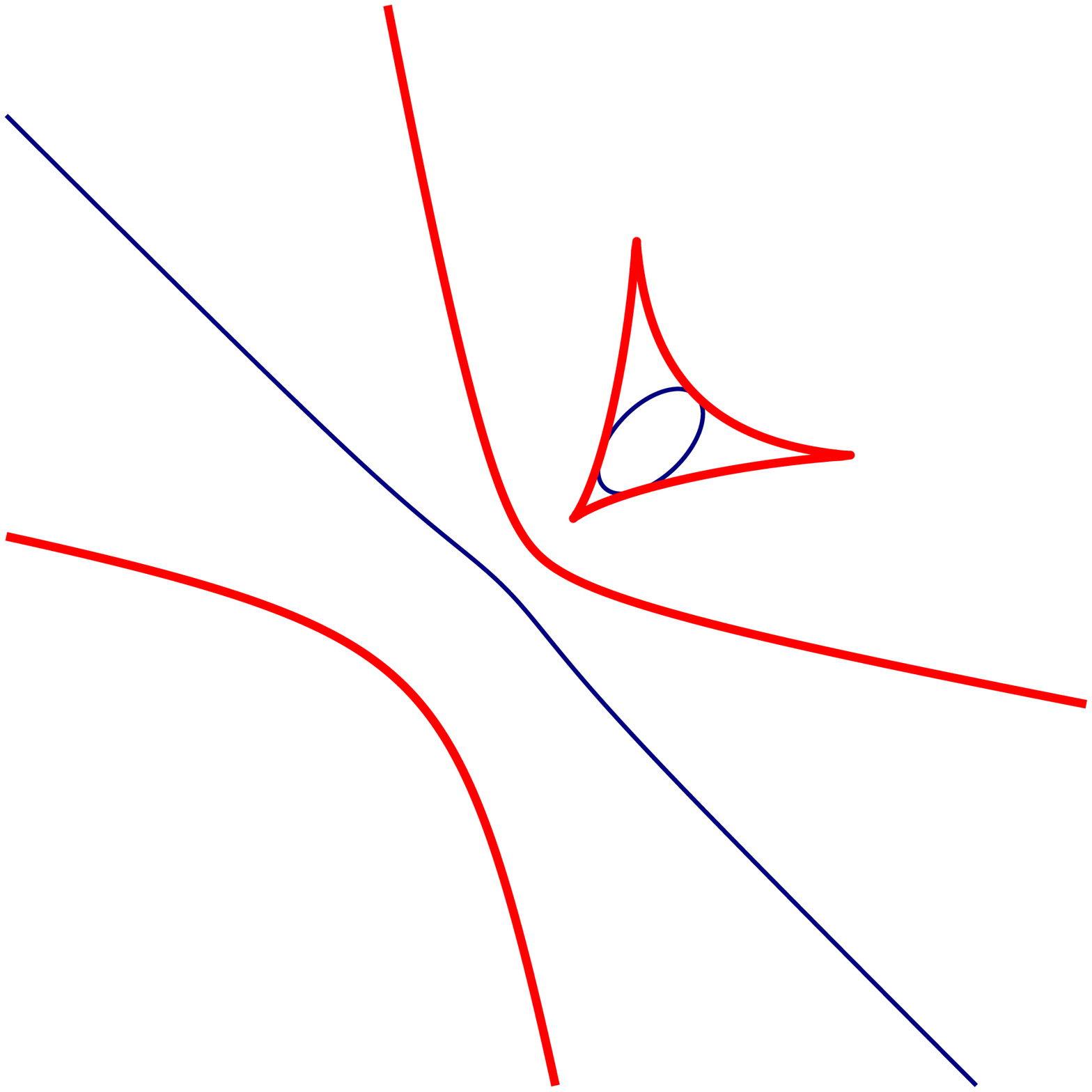}
     \end{subfigure}
     \vspace{-0.1in}
\caption{\label{fig:case4} Ramification and branching for  $\lambda > 6$. 
The triangular region is ${\rm SSP}(f)_\RR$.}
\end{figure}

Consider the three cases where $C(f)$ is hyperbolic.
These are in Figures~\ref{fig:case1}, \ref{fig:case2} and \ref{fig:case4}.
Here, $\PP^2_\RR \backslash B$ has three connected components.
The fibers of $F$ could have $0$, $2$ or $4$ real points
on these three regions.  The innermost region has  four real points
in its fibers. It is bounded by the triangular connected component of
the (red) branch curve $B$, which is dual to the pseudoline of $C(f)$.
This innermost region is connected and contractible: it is a disk in~$\PP^2_\RR$.

 If $\lambda\not\in[-3,0]$ then this disk is exactly our set  ${\rm SSP}(f)_\RR$.
  This happens in Figures \ref{fig:case1} and~\ref{fig:case4}.
 However, the case $\lambda\in(-3,0)$ is different.
This case is depicted in Figure~\ref{fig:case2}. Here,
we see that   ${\rm SSP}(f)_\RR$ 
consists of two regions. First, there is the disk as before, and
second, we have the outermost region. This region is bounded by the
oval that is shown as two unbouded branches
on the right in Figure~\ref{fig:case2}.
That region is  homeomorphic to a M\"obius strip in $\PP^2_\RR$.
The key observation is that the fibers of $F$ over that 
M\"obius strip consist of four real points.
 
\begin{figure}[t]
\vspace{-0.1in}
    \centering
    \begin{subfigure}[b]{0.3\textwidth}
        \includegraphics[scale=0.3]{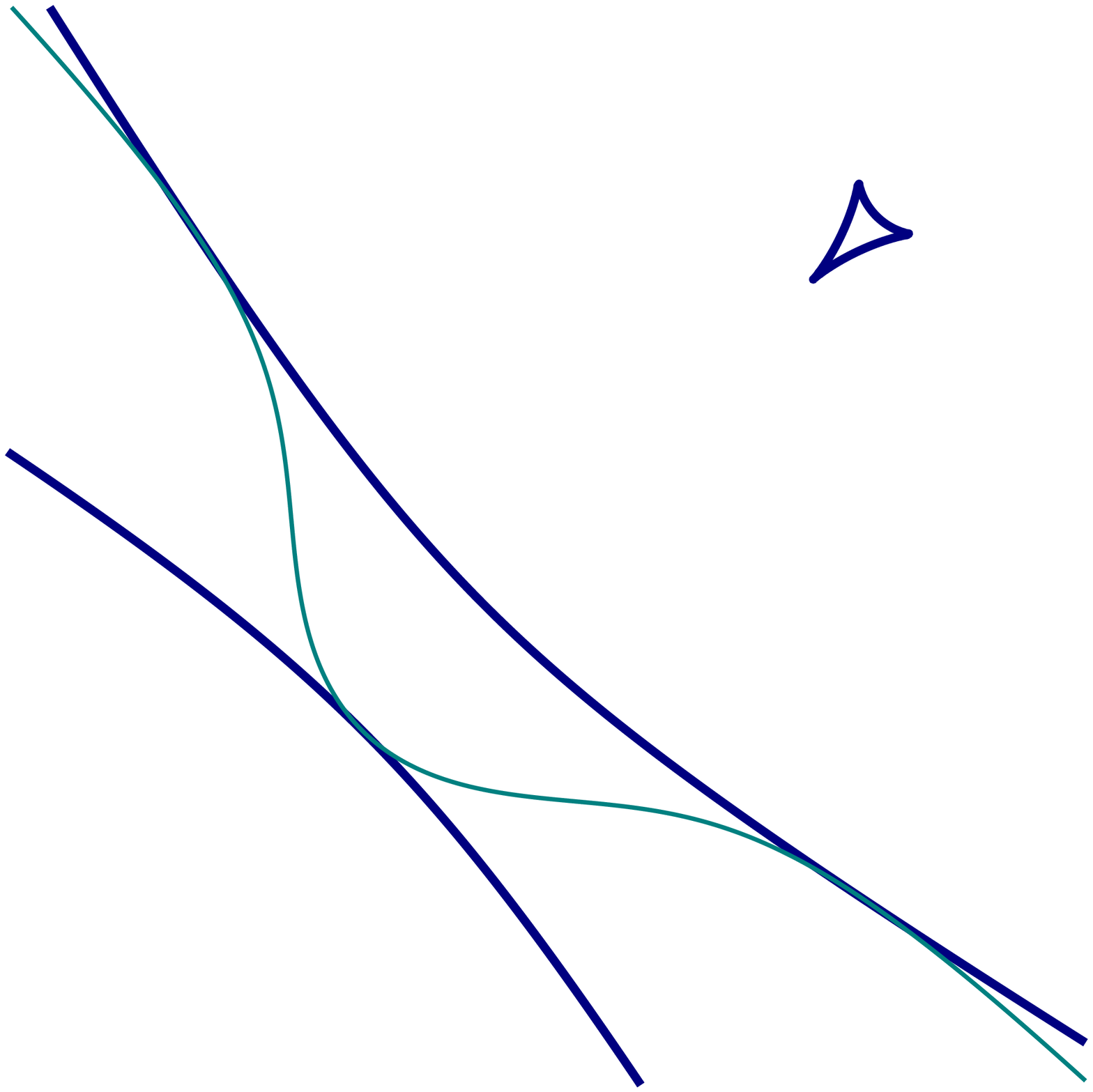}
    \end{subfigure}
    ~ 
      \hskip 3cm
    \begin{subfigure}[b]{0.3\textwidth}
        \includegraphics[scale=0.3]{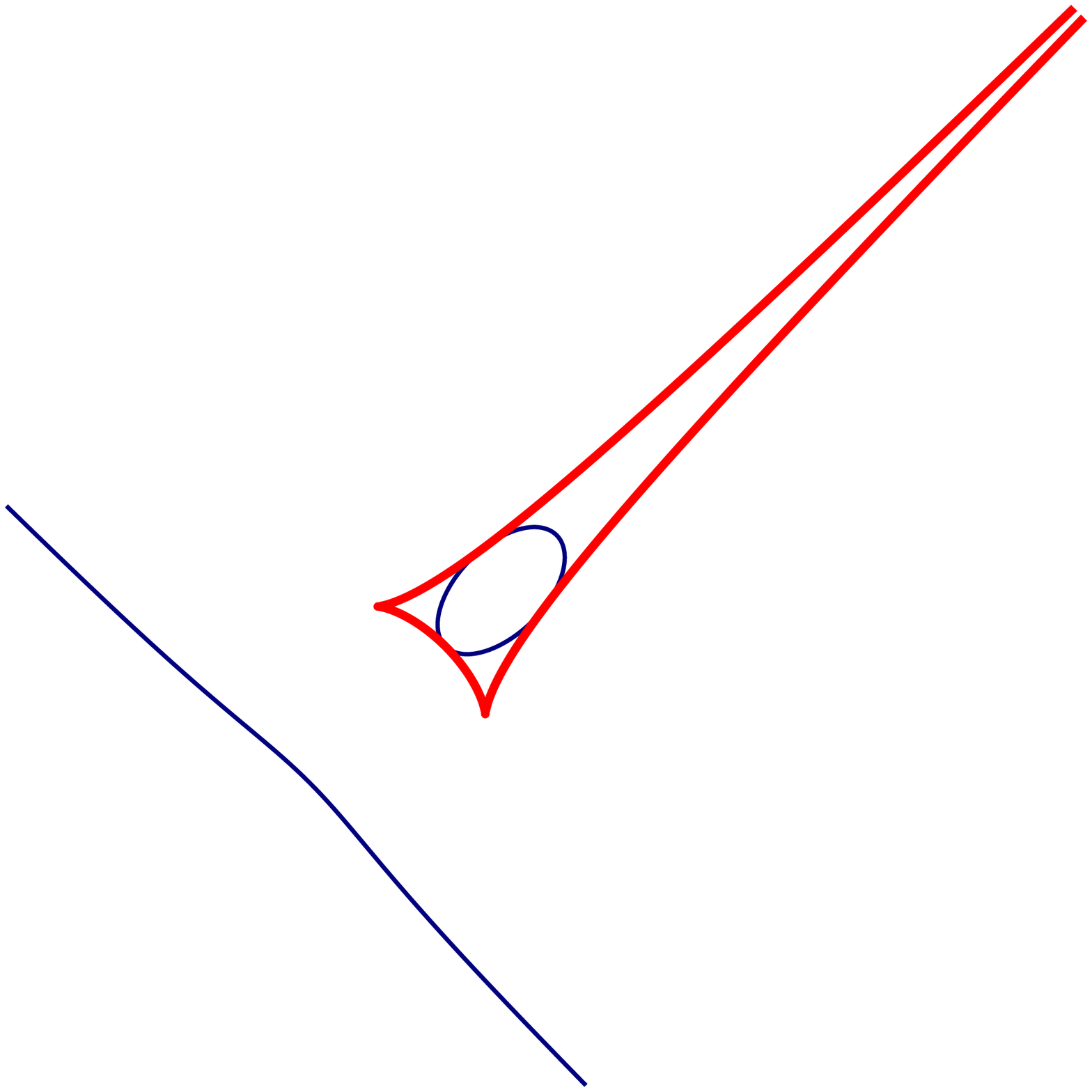}
        \label{fig: Codomain}
    \end{subfigure}
    \vspace{-0.15in}
 \caption{\label{fig:case3} Ramification and branching for  $0 < \lambda < 6$. 
The triangular region is ${\rm SSP}(f)_\RR$.}
\end{figure}

Figure~\ref{fig:case2} reveals something interesting 
for the  decompositions $f=\sum_{i=1}^4 \ell_i^3$.
These come in two different types, for $\lambda \in (-3,0)$,
 one for each of  the two connected components of
  ${\rm SSP}(f)_\RR$. Over the disk,
all four lines $\ell_i$ intersect the Hessian $H(f)$ only in its pseudoline.
Over the M\"obius strip, the
$\ell_i$ intersect the oval of $H(f)$ in two points 
and the third intersection point is on the pseudoline.
Compare this with Figure~\ref{fig:case4}: 
 the Hessian $H(f)$ is also hyperbolic, but all decompositions are of the same type: three lines 
 $\ell_i$ intersect $H(f)$ in two points of its oval and one point
 of its pseudoline, while the fourth line intersects $H(f)$ only in its pseudoline.
 
 It remains to consider the case when $C(f)$ is not hyperbolic.
 This is shown in Figure~\ref{fig:case3}.  The branch curve  $B = C(f)^\vee$ divides $\PP^2_\RR$
 into two regions, one disk and one M\"obius strip.
 The former corresponds to fibers with four real points, and the latter
corresponds to fibers with two real points. 
We conclude that ${\rm SSP}(f)_\RR$ is a disk also in this last case.
We might note, as a corollary, that
 all fibers of  $F: \PP^2 \rightarrow \PP^2 $ contain real points,
provided $0 < \lambda < 6 $.

For all four columns of Table \ref{tab:behavior}, the algebraic boundary of the 
set ${\rm SSP}(f)_\RR$ is the branch curve
$B$. This is a sextic with nine cusps because it is
dual to the smooth cubic $C(f)$.
\end{proof}

One may ask for the topological structure of the $4:1$ covering 
over ${\rm SSP}(f)_\RR$.  Over the disk, our map $F$ is $4:1$.
It maps four disjoint disks. 
Each linear form in the corresponding decompositions $f = \sum_{i=1}^4 \ell_i^3$
 comes from one of the four regions seen in the left pictures:
 
 \begin{enumerate} 
 \item[(i)] in Figure \ref{fig:case1}, inside the region bounded by $H(f)^\vee$ 
 and cut into four by  $C(f)$;
 
 \item[(ii)] in Figure \ref{fig:case2}, inside the spiky triangle bounded by $H(f)^\vee$ and cut into four by $C(f)$;
 
 \item[(iii)] in Figures \ref{fig:case4} and \ref{fig:case3}, one inside the triangle bounded by $H(f)^\vee$ and the others in the region bounded by the other component of $H(f)^\vee$ cut into three regions by $C(f)$.
 
 \end{enumerate}
The situation is even more interesting over the M\"obius strip. We can continuously change the set $\{\ell_1,\ell_2,\ell_3,\ell_4\}$, reaching in the end the same as at the beginning, but cyclicly permuted.

 

\begin{remark}  \rm Given a  ternary cubic $f$
with rational coefficients, how to 
 decide whether ${\rm SSP}(f)_\RR$ has
one or two connected components? The classification
 in  Table~\ref{tab:behavior} can  be used for this task 
as follows. We first compute the j-invariant of $f$ and then we
substitute the rational number $j(f)$ into (\ref{eq:jinv}).
 This gives a polynomial of degree $4$ in the unknown $\lambda$. 
 That polynomial has  two distinct real roots $\lambda_1 < \lambda_2$, provided
  $\,j(f) \not\in \{0,1728\}$. They satisfy 
$\lambda_1 < \lambda_2 < -3$,  or $0 < \lambda_1 < \lambda_2 < 6$,
or ($-3 < \lambda_1  < 0$ and $ 6 < \lambda_2$).
Consider the involution that swaps $\lambda_1 $ with $\lambda_2$.
This fixes the case in Figure \ref{fig:case1}, and the case in Figure \ref{fig:case3},
but it swaps the cases in Figures \ref{fig:case2} and \ref{fig:case4}.
Thus this involution preserves the hyperbolicity bahavior. We get two connected 
components, namely both the disk and M\"obius strip, only in the last case. The correct $\lambda$ 
is identified by comparing the sign of the degree six invariant $T(f)$, as in
(\ref{eq:Tdeg6}).
\end{remark}

\begin{example} \rm
The following cubic is featured in 
the statistics context of \cite[Example 1.1]{SU}:
$$ f \, = \, {\rm det} \begin{small} \begin{pmatrix}
x{+}y{+}z \! & x & y \\
\! x & x{+}y{+}z \! & z \\ 
y & z & \! x{+}y{+}z \end{pmatrix} \end{small}
\,\, = \,\,
\frac{4}{3} (x+y+z)^3 \,-\,
\frac{2}{3} (x+y)^3 \,-\,
\frac{2}{3} (x+z)^3 \,-\,
\frac{2}{3} (y+z)^3 .
$$
Its j-invariant equals $ j(f) =  16384/5$. The corresponding real Hesse 
curves have parameters $\lambda_1 = -13.506...$ and $\lambda_2 =  -5.57...$,
so we are in the case of Figure~\ref{fig:case1}.
Indeed, the curve $V(f)$ is hyperbolic, as seen in \cite[Figure 1]{SU}.
Hence ${\rm SSP}(f)_\RR$ is a disk, shaped like a spiky triangle.
The real
decomposition above is right in its center. Moreover, we can check that $T(f)<0$.
Hence $\lambda_1$ provides the unique curve in the Hesse pencil 
that is isomorphic to $f$ over $\RR$.
\hfill $\diamondsuit$
\end{example}

\begin{remark} 
\rm
The value $1728$ for the j-invariant plays a  special role.
A real cubic $f$ is hyperbolic if $j(f) > 1728$,
and it is not hyperbolic if $j(f) < 1728$. Applying
this criterion to a given cubic along with its Hessian and  Cayleyan
is useful for the classification in Table~\ref{tab:behavior}.

What happens for $j(f) = 1728$?   Here, the two real forms of the
complex curve $V(f)$ differ: one is hyperbolic and the other one is not.
For example, $f_1 = x^3- x z^2 - y^2 z $ is hyperbolic and
$f_2 = x^3 + x z^2 - y^2 z$ is not hyperbolic. These
two cubics are isomorphic over $\CC$, with $j(f_1)  = j(f_2) = 1728$,
and they are also isomorphic to their Hessians and Cayleyans.

We find noteworthy that the topology of ${\rm SSP}(f)_\RR$ 
can distinguish between the two real forms of an elliptic curve.
This happens when $j(f) < 1728< {\rm min}\bigl\{ j(H(f)), j(C(f)) \bigr\}$.
Here the two real forms of the curve correspond to 
the second and fourth column in Table \ref{tab:behavior}.
\end{remark}

We close this section by explaining the last row of Table \ref{tab:behavior}.
It concerns the {\em oriented matroid} \cite{OrMat}
of the configuration $\{(a_i,b_i,c_i) : i=1,\ldots,r\}$ 
in the decompositions (\ref{eq:waring}).
For $d=3$ the underlying matroid is always {\em uniform}.
This is the content of the following lemma.

\begin{lemma}\label{lem:nondegcubic}
Consider a ternary cubic $f=\sum_{i=1}^4\ell_i^3$ whose
apolar ideal $f^{\perp}$ is generated by three quadrics.
Then any three of the linear forms $\ell_1,\ell_2,\ell_3,\ell_4$ are linearly independent.
\end{lemma}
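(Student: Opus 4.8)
The plan is to argue by contradiction: suppose that three of the linear forms, say $\ell_1,\ell_2,\ell_3$, are linearly dependent. Then after a real linear change of coordinates we may assume that $\ell_1,\ell_2,\ell_3$ all lie in the two-variable subspace $\RR[x,y]_1$, so that $f = g(x,y) + \ell_4^3$ where $g = \ell_1^3+\ell_2^3+\ell_3^3 \in \RR[x,y]_3$. The goal is to show that this forces $f^\perp$ to have a generator of degree $\leq 1$, or more precisely to fail to be a complete intersection of three quadrics — contradicting the hypothesis (which, by the discussion preceding Example~\ref{Explicit example of cubic}, is exactly what makes ${\rm VSP}(f)\simeq\PP^2$ and the whole setup go through).

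The key step is to compute the apolar ideal in the degenerate situation. First I would handle the sub-case where $\ell_4$ is itself in $\RR[x,y]_1$: then $f\in\RR[x,y]_3$ is a binary cubic (viewed in three variables), and $z$ divides nothing but $\partial/\partial z$ annihilates $f$, so the linear form $z\in f^\perp_1$; thus $f^\perp$ has a linear generator and cannot be generated by three quadrics. In the main sub-case $\ell_4\notin\RR[x,y]_1$, I would change coordinates so $\ell_4 = z$, giving $f = g(x,y) + z^3$. Now $\partial_x$ and $\partial_y$ kill the $z^3$ term and $\partial_z$ kills the $g$ term, so $f^\perp$ is essentially the direct sum of $g^\perp \subset \RR[x,y]$ and $(z^3)^\perp$-type data; concretely, the quadric $xz$, $yz$ lie in $f^\perp$ (since $\partial_x\partial_z f = \partial_y\partial_z f = 0$), and $g^\perp$ already contributes at least one quadric in $x,y$ alone (a general binary cubic has a two-dimensional space of apolar quadrics, but we only need one). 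Counting: $f^\perp_2$ then contains $xz$, $yz$, and at least one quadric in $\RR[x,y]_2$, and one checks these do not cut out a finite scheme — e.g. they all vanish on the point $(0:0:1)$ together with more, so $\dim(S/f^\perp)$ is wrong, or equivalently $f^\perp$ is not a complete intersection of three quadrics. This contradiction finishes the proof.

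The main obstacle I anticipate is making the last count clean: I want a crisp statement of the form ``if three of the $\ell_i$ are dependent then $f^\perp$ is not $(2,2,2)$'', and the cleanest route is probably dimension-theoretic. The Hilbert function of $S/f^\perp$ for the expected complete intersection of three quadrics is $(1,3,3,1)$, total length $8$; I would show that in the degenerate case the length (or the number of quadratic generators) is different — for instance, when $f = g(x,y)+z^3$ one computes directly that $f^\perp$ has Hilbert function with $\dim(S/f^\perp)_2 \neq 3$, or that $f^\perp$ requires more than three quadric generators because the apolar ideal decomposes compatibly with the variable split. Alternatively, and perhaps most slickly, one can invoke that $f$ as above is not \emph{identifiable/nondegenerate} in the sense that its $\mathrm{VSP}$ drops dimension or the catalecticant $C_{1,2}(f)$ drops rank below $3$ — indeed if $\ell_1,\ell_2,\ell_3$ span only a two-dimensional space, the linear forms $\ell_1,\dots,\ell_4$ span at most a three-dimensional space but the three points $[\ell_1],[\ell_2],[\ell_3]\in\PP^2$ are collinear, and one shows the ideal of such a configuration of $4$ points (three collinear) is never a subideal of an artinian Gorenstein $(2,2,2)$ complete intersection, because a $(2,2,2)$ complete intersection contains no three collinear points among any apolar set of four. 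I would pick whichever of these formulations requires the least computation and state the others as remarks.
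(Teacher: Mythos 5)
Your argument matches the paper's proof, written out in coordinates: the linear operator $p$ that annihilates $\ell_1,\ell_2,\ell_3$ is your $z$, the two independent operators $q_1,q_2$ annihilating $\ell_4$ are your $x$ and $y$, and the two quadrics $pq_1, pq_2 \in f^\perp_2$ produced in the paper are exactly your $xz$ and $yz$. One small correction to your wording: $xz$, $yz$, $Q(x,y)$ actually \emph{do} cut out a finite scheme in $\PP^2$ (at most three points); what matters is not that the scheme fails to be finite but that it is nonempty — all three quadrics vanish at $(0{:}0{:}1)$, as you note in the very next clause — which contradicts the fact that $f^\perp$ is Artinian, i.e.\ that three quadrics generating it must form a complete intersection with empty zero locus in $\PP^2$. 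So the substance is right; only the phrase ``do not cut out a finite scheme'' should be replaced by ``cut out a nonempty scheme.'' The alternative length/Hilbert-function and VSP formulations you sketch at the end are correct in spirit but not needed once that single observation is in place.
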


\begin{proof}
Suppose $\ell_1,\ell_2, \ell_3$ are linearly dependent. They 
are annihilated by a linear operator $p$ as in
(\ref{eq:keyplayer}). Let $q_1$ and $q_2$ be independent linear operators 
 that annihilate $\ell_4$. Then $p q_1$ and $p q_2$ are
 independent quadratic operators annihilating $f$. Adding a third quadric would not lead to
  a complete intersection. This is a contradiction, since $f^{\perp}$ is a complete intersection. 
\end{proof}

In the situation of Lemma \ref{lem:nondegcubic}
there is unique vector
$v = (v_1,v_2,v_3,v_4) \in (\RR \backslash \{0\})^4$ satisfying
$v_1 = 1$ and $\sum_{i=1}^4 v_i \ell_i = 0$.
The oriented matroid of $(\ell_1,\ell_2,\ell_3,\ell_4)$
is the sign vector $\bigl(+,{\rm sign}(v_2),{\rm sign}(v_3), {\rm sign}(v_4)\bigr) \in \{-,+\}^4$.
Up to relabeling there are only three possibilities:  

$ \quad (+,+,+,+)$: \ the four vectors $\ell_i$ contain the origin in their convex hull;

 $ \quad (+,+,+,-)$: \ the triangular cone spanned by $\ell_1,\ell_2,\ell_3$ in $\RR^3$ contains $\ell_4$;

 $ \quad (+,+,-,-)$: \ the cone spanned by $\ell_1,\ell_2,\ell_3,\ell_4$ is the
cone over a quadrilateral. 

For a general cubic $f$, every point in ${\rm SSP}(f)_\RR$
is mapped to one of the three sign vectors above. By continuity,
this map is constant on each connected component of ${\rm SSP}(f)_\RR$.
The last row in Table \ref{tab:behavior} shows the resulting map from the five
connected components to the three oriented matroids. 
Two of the fibers have cardinality one.
 For instance, the fiber over $(+,+,-,-)$ is the M\"obius strip in
 ${\rm SSP}(f)_\RR$. This is the first of the following two cases.
 
\begin{corollary}
For a general ternary cubic $f$, we have the following equivalences:
\begin{enumerate}
\item[(i)] 
The space
${\rm SSP}(f)_\RR$ is disconnected if and only if $f$ is isomorphic over $\RR$ to a cubic
of the form $x^3+y^3+z^3+(ax+by-cz)^3$ 
where $a,b,c$ are positive real numbers.
\item[(ii)] The Hessian
$H(f)$ is hyperbolic and the Cayleyan $C(f)$ is not hyperbolic if and only if $f$ is isomorphic 
 to $x^3+y^3+z^3
+(a x+ by+cz)^3$ 
where $a,b,c$ are positive real numbers.
\end{enumerate} 
\end{corollary}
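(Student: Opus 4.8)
The plan is to reduce both equivalences to the classification in Table~\ref{tab:behavior} via the oriented matroid dictionary set up right after Theorem~\ref{thm:diskmobius}. Since $f$ is general, Remark~\ref{rem:Hessereal} lets me assume, after a real coordinate change $\tau\in{\rm PGL}(3,\RR)$, that $f$ lies in the Hesse pencil~(\ref{eq:hesse}) with $\lambda\notin\{-3,0,6\}$; then the four columns of Table~\ref{tab:behavior} record, for $f$, the topology of ${\rm SSP}(f)_\RR$, the hyperbolicity of $H(f)$ and $C(f)$, and — in its last row — the oriented matroids of the real rank-$4$ decompositions $f=\sum_{i=1}^4\ell_i^3$ (note $R(3)=4$, so every such real decomposition is a point of ${\rm SSP}(f)_\RR$, and by that row the oriented matroid is constant on each connected component). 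Inspecting the table, ${\rm SSP}(f)_\RR$ is disconnected exactly in the second column ($-3<\lambda<0$), while $H(f)$ is hyperbolic and $C(f)$ is not exactly in the third column ($0<\lambda<6$). Moreover the ``two--two'' sign type $(+,+,-,-)$ is realized only in the second column and the ``three--one'' type $(+,+,+,-)$ only in the third. So the task reduces to matching, in each of these two columns, the existence of a real decomposition of the prescribed sign type with the asserted normal form.

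For the implications \emph{from a normal form to the column}, suppose first $f\cong_\RR x^3+y^3+z^3+(ax+by-cz)^3$ with $a,b,c>0$. Writing $\ell_1=x$, $\ell_2=y$, $\ell_3=z$, $\ell_4=ax+by-cz$, any three of the $\ell_i$ are linearly independent (the four $3\times3$ determinants are nonzero since $a,b,c\neq0$), so this is a genuine point of ${\rm SSP}(f)_\RR$ and its oriented matroid is well defined: the normalized dependence is $\ell_1+\tfrac ba\ell_2-\tfrac ca\ell_3-\tfrac1a\ell_4=0$, of sign type $(+,+,-,-)$. As that type occurs only in the second column, $\lambda\in(-3,0)$, hence ${\rm SSP}(f)_\RR$ is disconnected by Theorem~\ref{thm:diskmobius}. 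The form $x^3+y^3+z^3+(ax+by+cz)^3$ is handled identically: the dependence is now $\ell_1+\tfrac ba\ell_2+\tfrac ca\ell_3-\tfrac1a\ell_4=0$ of type $(+,+,+,-)$, forcing $\lambda\in(0,6)$, i.e. $H(f)$ hyperbolic and $C(f)$ not.

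For the converse, \emph{from the column to a normal form}: if ${\rm SSP}(f)_\RR$ is disconnected I pass to the M\"obius-strip component, whose decompositions have oriented matroid $(+,+,-,-)$; if $H(f)$ is hyperbolic and $C(f)$ is not, I take any real decomposition, which then has oriented matroid $(+,+,+,-)$. In either case, choosing such a decomposition $f=\sum\ell_i^3$ and invoking Lemma~\ref{lem:nondegcubic} (so the dependence $\sum v_i\ell_i=0$ has all $v_i\neq0$), I relabel the $\ell_i$ so that $v_1=1$ and the signs of $(v_1,v_2,v_3,v_4)$ are as above; solving for $\ell_4$ yields either $\ell_4=a\ell_1+b\ell_2-c\ell_3$ (first case) or $\ell_4=a\ell_1+b\ell_2+c\ell_3$ (second case), where $a=-1/v_4$, $b=-v_2/v_4$, $c=|v_3/v_4|$ are all positive by the sign pattern. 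Finally, since $\ell_1,\ell_2,\ell_3$ are independent there is a linear substitution carrying them to $x,y,z$; it brings $f$ to $x^3+y^3+z^3+(ax+by-cz)^3$, resp. $x^3+y^3+z^3+(ax+by+cz)^3$, as desired.

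The only delicate point — and the main obstacle — is exactly the bookkeeping just flagged: the oriented matroid is an attribute of a \emph{decomposition}, not of $f$, and is defined only up to relabeling, so the argument hinges on (a) the fact, read off from the last row of Table~\ref{tab:behavior}, that the mixed sign types $(+,+,-,-)$ and $(+,+,+,-)$ are each realized in one and only one column, and (b) the observation that the relabeling of the $\ell_i$ can always be chosen so that the coefficient normalized to $1$ in $\sum v_i\ell_i=0$ lies among those of the majority sign, which produces precisely the patterns $(+,+,-,-)$ and $(+,+,+,-)$ rather than their complements. Granting this, both equivalences follow immediately from Theorem~\ref{thm:diskmobius}, Lemma~\ref{lem:nondegcubic} and Remark~\ref{rem:Hessereal}.
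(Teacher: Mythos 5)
Your proposal is correct and follows essentially the same route as the paper, which treats the corollary as a reformulation of the last row of Table~\ref{tab:behavior}: the sign pattern $(+,+,-,-)$ appears exactly in the second column (where ${\rm SSP}(f)_\RR$ is a disk $\sqcup$ Möbius strip) and $(+,+,+,-)$ exactly in the third (where $H(f)$ is hyperbolic and $C(f)$ is not). The main difference is that you spell out the bookkeeping — the normalization of the dependence vector, the explicit solution for $\ell_4$, and the final linear change of coordinates to $(x,y,z)$ — which the paper leaves implicit; this is a useful expansion but not a genuinely different argument.
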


\begin{proof}
The sign patters $(+,+,-,-)$ and $ (+,+,+,-)$ occur
in the   second and third 
column in Table \ref{tab:behavior} respectively.
The corollary is a reformulation of that fact.
The sign pattern $(+,+,+,+)$ in columns
1,2 and 4 corresponds to cubics
$x^3+y^3+z^3 -(a x+ by+cz)^3$.
\end{proof}

\begin{remark} \rm
The fiber over the oriented matroid $(+,+,+,+)$ 
consists of cubics of the form $x^3+y^3+z^3-(a x+ by+cz)^3$,
where $a,b,c  > 0$.
It may seem surprising that this space has
three components in Table \ref{tab:behavior}.
 One can pass from one component to another,
for instance, by passing through singular cubics, like
$\,24xyz =  (x+y+z)^3+(x-y-z)^3+(-x+y-z)^3+(-x-y+z)^3 $.
\end{remark}

\section{Quartics}

In this section we fix $d=4$ and we consider a general ternary quartic
$ f \in \RR[x,y,z]_4$. We have $r=R(4) = 6$
and ${\rm dim}({\rm VSP}(f)) = 3$, so the quartic $f$ admits a threefold of decompositions
\begin{equation}
\label{eq:admitsmany}
 f(x,y,z) \,\, = \,\, 
 \lambda_1 ( a_1 x+b_1 y+c_1z)^4 + 
  \lambda_2 ( a_2 x+b_2 y+c_2z)^4 + \
 \cdots + 
  \lambda_6 ( a_6 x+b_6 y+c_6z)^4 . 
  \end{equation}
  By the {\em signature} of $f$ we mean the signature of $C(f)$. This makes sense
     by Proposition~\ref{lem:2468}.

  According to the Hilbert-Burch Theorem,  the radical ideal $I_T$ of the point configuration  $\{ (a_i:b_i:c_i)\}_{i=1,\ldots,6}$  is generated by the $3 \times 3$-minors
  of a $4 \times 3$-matrix   $ T = T_1 x + T_2 y + T_3 z$,
  where $T_1,T_2,T_3 \in \RR^{4 \times 3}$.
  We interpret $T$ also as a $3 \times 3 \times 4$-tensor with entries in $\RR$,
    or as a $ 3 \times 3$-matrix of linear forms in $4$ variables.
  The determinant of the latter matrix defines the cubic surface in $\PP^3$
  that is the blow-up of    the projective plane $\PP^2$ at the six points.
    The apolar ideal $f^\perp $ is generated by seven cubics,
  and $I_T \subset f^\perp$ is generated by four of these. 

Mukai \cite{Muk} showed that ${\rm VSP}(f)$ is a Fano threefold
of type $V_{22}$, and a more detailed study of this threefold
was undertaken by Schreyer in \cite{Sch}.
The topology of the real points in that Fano threefold was studied by
Koll\'ar and Schreyer in \cite{KS}. Inside that real locus lives
the semialgebraic set we are interested in.
Namely, ${\rm SSP}(f)_\RR$ represents the set of radical ideals
$I_T$, arising from tensors $T \in \RR^{3 \times 3 \times 4}$,
such that $I_T \subset f^\perp$ and the variety $V(I_T)$
consists of six real points in $\PP^2$. This is equivalent to
saying that the cubic surface of $T$ has $27$ real lines.

Disregarding the condition $I_T \subset f^\perp$ for the moment,
this reality requirement defines a full-dimensional, connected 
semialgebraic region in the tensor space $\RR^{3 \times 3 \times 4}$.
The algebraic boundary of that region is defined by
the hyperdeterminant ${\rm Det}(T)$, which is an irreducible
homogeneous polynomial of degree $48$ in the $36$ entries of $T$.
Geometrically, this hyperdeterminant is the Hurwitz form 
in \cite[Example 4.3]{Stu}.
This is Proposition \ref{prop:schlafli} for $m=n=3$.

We are interested in those ternary forms $f$ whose apolar ideal $f^\perp$
contains $I_T $ for some $T$ in the region described above.
Namely, we wish to understand the semialgebraic set
$$  \mathcal{R}_4 \,\, = \,\, \bigl\{\, f \in \RR[x,y,z]_4 \,:\,
{\rm SSP}(f)_\RR \,\not= \,\emptyset \,\bigr\}. $$
The following is a step towards understanding the algebraic boundary 
$\partial_{\rm alg}( \mathcal{R}_4)$ of this set.

\begin{theorem}\label{Algebraic Boundary}
The algebraic boundary $\partial_{\rm alg}( \mathcal{R}_4)$ is
a reducible hypersurface in the $\PP^{14}$ of quartics. One of its
irreducible components has degree $51$; that component
 divides the quartics of signature $(5,1)$.
 Another irreducible component divides the region of hyperbolic quartics.
\end{theorem}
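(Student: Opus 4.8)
\textbf{Proof proposal for Theorem \ref{Algebraic Boundary}.}
The plan is to analyze the two distinct mechanisms by which a general quartic $f$ can lie on the topological boundary $\partial \mathcal R_4$: either $f$ has signature $(5,1)$ and the single negative eigenvalue of $C(f)$ is responsible (so the boundary corresponds to $\mathrm{SSP}(f)_\RR$ degenerating inside the Fano threefold $V_{22}$), or $f$ lies in the region of hyperbolic quartics and the real decompositions collide there. By Proposition~\ref{lem:2468} the signature of any decomposition (\ref{eq:admitsmany}) equals that of $C(f)$, so a quartic with $\mathrm{SSP}(f)_\RR\neq\emptyset$ has signature $(6,0)$, $(5,1)$, or $(4,2)$; the definite case $(6,0)$ yields $\overline{\mathrm{SSP}(f)_\RR}={\rm VSP}(f)_\RR$ which is nonempty and hence gives no boundary, and one checks the $(4,2)$ and $(3,3)$ strata do not contribute a new component, so the only strata producing boundary behavior of codimension one are $(5,1)$ and the hyperbolic locus.

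For the degree-$51$ component, I would work with the incidence variety of pairs $(f, I_T)$ where $T\in\RR^{3\times 3\times 4}$, the cubic minors of $T$ generate a radical ideal $I_T\subset f^\perp$, and $V(I_T)$ consists of six points. On the boundary, $f$ acquires a decomposition that is \emph{on the wall}: the configuration $V(I_T)$ becomes a limit of real configurations and a limit of non-real ones, which by the discussion preceding the theorem means the hyperdeterminant ${\rm Det}(T)$ of degree $48$ vanishes (two of the six points collide, or the cubic surface of $T$ loses its $27$ real lines). The strategy is then elimination: inside the $\PP^{14}$ of quartics intersected with the signature-$(5,1)$ stratum, the boundary is the image, under the projection forgetting $T$, of the subvariety cut out by $I_T\subset f^\perp$ (seven linear conditions on $T$ for fixed $f$, a $3$-dimensional family matching $\dim{\rm VSP}(f)=3$) together with ${\rm Det}(T)=0$. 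One then computes the degree of this image. The expected arithmetic is $48 = 51 - 3$ heuristically — the hyperdeterminant has degree $48$ in the entries of $T$, the $V_{22}$ fiber contributes a correction accounting for the three extra degrees — but the honest computation is a resultant/Chow-form degree count using the structure of $f^\perp$ (seven cubics, with $I_T$ four of them having two linear syzygies as in the Hilbert–Burch picture), and I would verify the number $51$ by an explicit Macaulay2 computation on a random rational quartic of signature $(5,1)$, intersecting with a generic line in $\PP^{14}$.

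For the hyperbolic component, the argument is geometric rather than enumerative: when $f$ is a hyperbolic quartic (its real plane curve $V(f)\subset\PP^2_\RR$ has the maximal number of nested ovals / the relevant definiteness fails on a real region), the real rank behavior is governed by the topology of the quartic curve in the same spirit as Theorem~\ref{thm:diskmobius} for cubics, and the wall where $f$ transitions out of $\mathcal R_4$ is a genuinely different divisor — it is not contained in the signature-$(5,1)$ component since hyperbolic quartics have a different signature profile. I would identify this component by exhibiting an explicit one-parameter family of quartics crossing from $\mathcal R_4$ into its complement through a hyperbolic quartic, showing the crossing is transverse, and noting that the defining polynomial of this wall is irreducible because the hyperbolic locus and its boundary are irreducible (being cut out by sign conditions on a single discriminant-type invariant). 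I do not claim to pin down its degree here, which is why the statement only asserts its existence.

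The main obstacle will be the degree-$51$ computation: controlling the elimination precisely requires understanding how the linear conditions ``$I_T\subset f^\perp$'' interact with the hyperdeterminant hypersurface $\{{\rm Det}(T)=0\}$ inside the $36$-dimensional tensor space — in particular ruling out excess intersection and spurious components (e.g.\ tensors $T$ whose cubic minors fail to be radical, or configurations where the collision of two points happens \emph{on} $V_{22}$ in a degenerate way), so that the projected degree is exactly $51$ and not a proper multiple or a sum with extraneous contributions. Separating the genuine real-rank wall from the locus where $\mathrm{SSP}(f)_\RR$ is merely empty for trivial reasons (the definite and $(4,2)$ strata) is the conceptual crux, and the Koll\'ar–Schreyer description \cite{KS} of the real locus of $V_{22}$ is the tool I would lean on to make that separation rigorous.
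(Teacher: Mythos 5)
Your proposal diverges from the paper's actual argument in a way that exposes a genuine gap, and it also contains a factual error in the signature analysis.

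\textbf{Signature analysis.} You assert that a quartic with ${\rm SSP}(f)_\RR\neq\emptyset$ has signature $(6,0)$, $(5,1)$, or $(4,2)$, and that the $(3,3)$ stratum does not contribute. This is false. A rank-$6$ real decomposition (\ref{eq:admitsmany}) can have any number of negative $\lambda_i$, so $(3,3)$ is allowed, and the paper's explicit quartic $f_0$ in the pencil $\{f_t\}$ has signature $(3,3)$ and real rank $6$. In fact, the second boundary component is detected precisely by a path starting in the $(3,3)$ stratum and crossing into the $(4,2)$ stratum (where ${\rm rk}_\RR \geq 7$ by \cite[Corollary 4.8]{BBO}). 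Dropping $(3,3)$ from consideration would make it impossible to even set up the argument for the hyperbolic component.

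\textbf{The degree-$51$ component.} The paper does not use the hyperdeterminant ${\rm Det}(T)$ or elimination over the incidence variety at all. Instead it defines the anti-polar quartic $\BQ(f)$ as the rank-one update $\det C(f+\ell^4) - \det C(f)$, computes its discriminant (a polynomial of degree $135 = 27 \times 5$ in the coefficients of $f$), and factors it as $\det(C(f))^{14}\cdot{\rm Bdisc}(f)$ with ${\rm Bdisc}(f)$ irreducible of degree $51$. The key characterization, valid on the signature-$(5,1)$ stratum, is that ${\rm rk}_\RR(f)=6$ if and only if the real curve $\BQ(f)$ is nonempty (one direction is Lemma~\ref{Blekhermangeneral}, the other uses a sign-change argument on $\BQ(f)$); the wall is therefore where $\BQ(f)$ transitions from having real points to not, i.e.\ ${\rm Bdisc}(f)=0$. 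Your hyperdeterminant route has a structural problem independent of the degree bookkeeping: $\{{\rm Det}(T)=0\}$ describes the boundary of ${\rm SSP}(f)_\RR$ \emph{inside} ${\rm VSP}(f)_\RR$ for a fixed $f$. For every generic $f$ there exist tensors $T$ with $I_T\subset f^\perp$ and ${\rm Det}(T)=0$, so the image of $\{(f,T): I_T \subset f^\perp, {\rm Det}(T)=0\}$ under projection to $\PP^{14}$ is all of $\PP^{14}$, not a hypersurface. The real rank boundary is the locus where \emph{all} real decompositions of $f$ degenerate simultaneously, a global condition over the threefold ${\rm VSP}(f)$ that naive elimination does not capture. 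You flag this obstacle at the end but do not resolve it, and your heuristic $48 = 51 - 3$ has no derivation. The anti-polar construction sidesteps all of this: it is a single polynomial condition in $f$ alone.

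\textbf{The hyperbolic component.} Your idea of exhibiting an explicit pencil crossing the boundary through a hyperbolic quartic matches the paper in spirit, and is the right strategy. But the verification requires two specific computations you do not supply: (i) checking that the degree-$51$ invariant ${\rm Bdisc}(f_t)$ has no positive real roots along the pencil, so the crossing is not at the component already found; and (ii) tracking both the discriminant of $f_t$ (to stay in the hyperbolic region) and the rank of $C(f_t)$ (which drops at some $\tau_0$), then invoking \cite[Corollary 4.8]{BBO} to force ${\rm rk}_\RR(f_t)\geq 7$ once the signature is $(4,2)$. Without these, transversality and distinctness from the first component are unsubstantiated.
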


\begin{proof}
By \cite[Example 4.6]{BBO}, $\partial_{\rm alg}( \mathcal{R}_4)$ is
non-empty, so it must be a hypersurface. 
We next identify the component of degree $51$.
The {\em anti-polar} of a quartic $f$,
 featured in \cite[\S 5.1]{BBO} and  in \cite{Dolsing},
is defined by the following rank $1$ update
of the middle catalecticant:
\begin{equation}
\label{eq:antipolar1}
 \BQ(f)(a,b,c) \,\, := \,\, \,{\rm det}\bigl(C(f + \ell^4) \bigr)\, -\, {\rm det}\bigl(C(f) \bigr)
\qquad {\rm for} \quad \ell = a x + b y + c z. 
\end{equation}
Writing ``Adj'' for the adjoint matrix, 
the Matrix Determinant Lemma implies
\begin{equation}
\label{eq:antipolar2}
 \BQ(f)(a,b,c) \,\,:=\,\,
(a^2,ab,ac,b^2,bc,c^2) \cdot {\rm Adj}(C(f)) \cdot 
(a^2,ab,ac,b^2,bc,c^2)^T. 
\end{equation}
The coefficients of the anti-polar quartic $\BQ(f)$
are homogeneous polynomials of degree $5$ in the coefficients of $f$.
The discriminant of $\BQ(f)$ is a polynomial of degree
$135 = 27 \times 5$ in the coefficients of $f$.
A computation reveals that this  factors as
${\rm det} ( C(f) )^{14}$ times an irreducible
polynomial ${\rm Bdisc}(f)$ of degree $51$. 
We call $ {\rm Bdisc}(f)$ the {\em Blekherman discriminant} of $f$.

We claim that $ {\rm Bdisc}(f)$ is an irreducible component
of $\partial_{\rm alg}( \mathcal{R}_4)$.
Let $f$ be a general quartic  of signature $(5,1)$.
Then ${\rm det}(C(f)) $ is negative, and
the quartic $\BQ(f)$ is nonsingular.
 We claim that $\rrk(f) = 6$ if and only if the curve $\BQ(f)$ has a real point.
The only-if direction is proved 
 in a more general context in
 Lemma \ref{Blekhermangeneral}.
For the if-direction, we note that the anti-polar quartic curve  divides
    $\PP_\RR^2$  into regions where $\BQ(f)$
 has opposite signs. Hence, we can find $\ell$ such that ${\rm det}\bigl(C(f + \ell^4) \bigr)=0$, and 
 therefore  $\rrk(f) = 6$.
 Examples in \cite[\S 5.1]{BBO} show that
 $\rrk(f)$ can be either $6$ or $7$.
We conclude that, among quartics of signature $(5,1)$, the
boundary of $\mathcal{R}_4$ is given by the Blekherman discriminant 
${\rm Bdisc}(f)$ of degree $51$.

To prove that  $\partial_{\rm alg}( \mathcal{R}_4)$ 
  is reducible, we     consider the following pencil of quartics:
$$ \begin{matrix}  f_t \quad = \quad
(6x-4y+17z)^4+(4x-16y-5z)^4+(20x+2y-19z)^4  \qquad \qquad \qquad \qquad \\
\qquad \qquad -(15y-17z)^4-(13x+14y+9z)^4-(16x-6y-18z)^4 \qquad \qquad \qquad \qquad \\ 
+\,t\cdot(-2x^4{+}2x^3z{-}x^2y^2{+}2x^2yz{+}x^2z^2
{+}xy^3{-}xy^2z{-}2xyz^2{+}xz^3{+}y^4{+}y^3z{+}2y^2z^2{-}2yz^3{-}2z^4).
\end{matrix}
$$
At $t=0$, we obtain a quartic $f_0$ of signature $(3,3)$ that has real rank $6$.
One checks that $f_0$ is smooth and hyperbolic.
We substitute $f_t$ into the invariant of degree $51$ derived above,
and we note that the resulting univariate polynomial in $t$ has no positive real roots.
So, the ray $\{f_t\}$ given by $ t \geq 0$ does not intersect the
boundary component we already identified.

For positive parameters $t$, the discriminant of $f_t$ is nonzero, until $t $ reaches $\tau_1  = 6243.83....$.
This means that $f_t$ is smooth hyperbolic for
real parameters $t$ between $0$ and $\tau_1$.
On the other hand, the rank of the middle catalecticant $C(f_t)$ drops from
$6$ to $5$ when $t$ equals    $\tau_0 = 3103.22....$.
 Hence, for $\tau_0 < t < \tau_1$, the quartics $f_t$ are hyperbolic and of
    signature $(4,2)$. By \cite[Corollary 4.8]{BBO}, these quartics have real rank at least $7$.
    This means that the half-open interval given by $(0,\tau_0]$ crosses the
    boundary of $\mathcal{R}_4$ in a new    irreducible component.
   \end{proof}

\begin{remark} \rm
One of the starting points of this project was the question 
 whether
 $\rrk(f) \geq 7$ holds for all hyperbolic quartics $f$.
This was shown to be false in \cite[Remark 4.9]{BBO}.
The above quartic $f_0$ is an
alternative counterexample, with an
explicit rank $6$ decomposition over $\mathbb{Q}$.
\end{remark}

  We believe that, in the proof above, the crossing takes place at $\tau_0$, and that
    this newly discovered component is simply
    the determinant of the catalecticant ${\rm det}(C(f))$. But we 
    have not been able to certify this. Similar examples suggest that
    also the discriminant ${\rm disc}(f)$ itself appears in the real rank boundary.
    Based on this, we propose the following conjecture.

\begin{conjecture} \label{conj:3comp}
The real rank boundary $\partial_{\rm alg}(\mathcal{R}_4)$
for ternary quartics
is a reducible hypersurface of degree $84 = 6+27+51$  in $\PP^{14}$.
 It has three  irreducible components, namely the
determinant of the catalecticant,
the discriminant, and the Blekherman discriminant. Algebraically,
$$ \partial_{\rm alg}(\mathcal{R}_4) \,\, = \,\,
{\rm det}(C(f)) \cdot {\rm disc}(f) \cdot {\rm Bdisc}(f). $$
\end{conjecture}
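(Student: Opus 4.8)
The plan is to stratify the general ternary quartics by the signature of the middle catalecticant $C(f)$, which by Proposition~\ref{lem:2468} coincides with the signature of every rank-$6$ decomposition of $f$. The degree-$6$ hypersurface $\{{\rm det}(C(f))=0\}$ separates the four strata of signatures $(6,0),(5,1),(4,2),(3,3)$, and on each stratum I would pin down the rank-$6$ locus together with its topological boundary; the union of these boundaries and of the parts of $\{{\rm det}(C(f))=0\}$ that separate strata of different rank behaviour is $\partial\mathcal{R}_4$, whose Zariski closure we want to identify with $V\bigl({\rm det}(C(f))\cdot{\rm disc}(f)\cdot{\rm Bdisc}(f)\bigr)$.

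On the $(6,0)$ stratum $C(f)$ is positive definite, so by Proposition~\ref{lem:2468} and Reznick \cite{Rez} every such $f$ has $\rrk(f)=6$, and this open set lies in the interior of $\mathcal{R}_4$. For the $(4,2)$ stratum the goal is $\rrk(f)\ge 7$ for all general $f$ in it; this holds for the hyperbolic ones by \cite[Corollary~4.8]{BBO}, and extending it to the whole stratum is a first sublemma. Granting it, the $(4,2)$ stratum contributes no rank-$6$ region, but it meets the rank-$6$ part of the $(3,3)$ stratum along $\{{\rm det}(C(f))=0\}$, and the pencil $f_t$ from the proof of Theorem~\ref{Algebraic Boundary} crosses this wall at $\tau_0$, passing from a rank-$6$ signature-$(3,3)$ quartic to a rank-$\ge 7$ signature-$(4,2)$ quartic; promoting this to a Zariski-dense subset of the wall shows that ${\rm det}(C(f))$, irreducible of degree $6$, is a component of $\partial_{\rm alg}(\mathcal{R}_4)$.

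On the $(5,1)$ stratum the analysis of Theorem~\ref{Algebraic Boundary} applies: $\rrk(f)=6$ if and only if the anti-polar quartic $\Omega(f)$ has a real point (the ``only if'' being Lemma~\ref{Blekhermangeneral}), and a smooth ternary quartic passes from empty to nonempty real locus only across the discriminant, so within this stratum the rank-$6$ and rank-$7$ loci are the two open pieces separated by $\{{\rm Bdisc}(f)=0\}$, the non-trivial factor of ${\rm disc}(\Omega(f))={\rm det}(C(f))^{14}\cdot{\rm Bdisc}(f)$. Since ${\rm Bdisc}(f)$ is irreducible of degree $51$, it is a component, and this stratum contributes nothing else to the boundary.

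The heart of the matter is the $(3,3)$ stratum, where I would show the rank-$6$ locus is open with topological boundary on $\{{\rm disc}(f)=0\}$. Concretely: (a) every smooth hyperbolic ternary quartic of signature $(3,3)$ has $\rrk(f)=6$, i.e.\ ${\rm SSP}(f)_\RR\ne\emptyset$, which I would prove using the real structure of Mukai's Fano threefold $V_{22}={\rm VSP}(f)$ studied by Schreyer \cite{Sch} and Koll\'ar--Schreyer \cite{KS}, constructing a real tensor $T\in\RR^{3\times 3\times 4}$ with $I_T\subset f^\perp$ whose cubic surface has $27$ real lines; and (b) across the discriminant wall that separates the hyperbolic quartics from the adjacent non-hyperbolic ones in this stratum, the real rank jumps to $\ge 7$ --- here ${\rm Adj}(C(f))$ has signature $(3,3)$, so $\Omega(f)$ is indefinite and the $(5,1)$-criterion is unavailable, and a genuinely new reality obstruction is needed, which I would try to extract from the component structure of ${\rm VSP}(f)_\RR$ in \cite{KS}. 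This makes ${\rm disc}(f)$, the irreducible ternary quartic discriminant of degree $27$, a component; with $6+27+51=84$ and the three factors pairwise non-proportional, it remains only to show there are no further components, i.e.\ that on each stratum the real rank is locally constant off the named discriminants. That last step is the main obstacle: it is essentially the complete real-rank stratification of general ternary quartics, which the paper leaves open, and the serious difficulty is handling all the non-hyperbolic topological types of the quartic curve in the $(3,3)$ stratum while tracking exactly when ${\rm SSP}(f)_\RR$ becomes empty.
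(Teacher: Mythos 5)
The statement you are proving is labelled a \emph{conjecture} in the paper, so there is no ``paper's own proof'' to compare against. What the paper actually proves (Theorem~\ref{Algebraic Boundary}) is weaker: that $\partial_{\rm alg}(\mathcal{R}_4)$ is a reducible hypersurface, that ${\rm Bdisc}(f)$ of degree $51$ is one irreducible component, and that at least one further component separates the hyperbolic quartics. The authors then state explicitly that they ``have not been able to certify'' that this further component is ${\rm det}(C(f))$, and that ${\rm disc}(f)$ appearing as well is ``suggested'' by ``similar examples.'' Your proposal is therefore an attempt at a genuinely open problem, not a reconstruction of an existing argument.

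As a program, your signature stratification is sensible and is the natural continuation of the paper's Theorem~\ref{Algebraic Boundary}, but the gaps you yourself flag are precisely what keeps this a conjecture. First, on the $(4,2)$ stratum you need $\rrk(f)\ge 7$ for \emph{all} general $f$ there, not only the hyperbolic ones covered by \cite[Corollary~4.8]{BBO}; the paper's pencil argument only exhibits a crossing somewhere in the interval $(0,\tau_0]$ and does not localize it to the catalecticant wall, which is exactly why the authors could not certify ${\rm det}(C(f))$ as a component. Second, on the $(3,3)$ stratum both of your steps (a) and (b) are open. Item (a), that every smooth hyperbolic signature-$(3,3)$ quartic has real rank $6$, is nowhere asserted in the literature you cite; the paper exhibits a single such example $f_0$ and \cite[Remark 4.9]{BBO} only records that the phenomenon is possible. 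Item (b) requires a reality obstruction on the non-hyperbolic side where the anti-polar criterion genuinely fails, and nothing in \cite{KS} or \cite{Sch} gives this off the shelf --- the component structure of ${\rm VSP}(f)_\RR$ described there does not by itself control whether any component meets the fully-real locus. Third, excluding further components amounts to proving that real rank is locally constant on the complement of the three named hypersurfaces in every stratum; that is essentially the complete real-rank stratification of general ternary quartics, which the paper lists among its open problems. In short, your outline correctly identifies the shape a proof would have, but the missing lemmas are not routine and are the substance of the conjecture itself.
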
    

The construction of the Blekherman discriminant extends to the case
when $f$ is a sextic or octic; see Lemma \ref{Blekhermangeneral}.
For quartics $f$, we can use
it to prove $\rrk(f) > 6$ also when the
 signature is $(4,2)$ or $(3,3)$. 
 We illustrate this for the quartic given by four distinct lines.

 \begin{example} \rm
 We claim that $f=xyz(x{+}y{+}z)$ has
  $\rrk(f) = 7$. For the upper bound, note
 $$  12 f \,\, = \,\, x^4\,+\,y^4\,+\,z^4 - \,(x+y)^4 - (x+z)^4 - (y+z)^4 +  (x+y+z)^4 . $$
The catalecticant $C(f)$ has signature $(3,3)$. The 
anti-polar quartic $\BQ(f)= a^2 b^2 + a^2 c^2 + b^2 c^2 - a^2 bc - ab^2c - abc^2  $ 
is nonnegative. By \cite[Section 5.1]{BBO}, we have $\rrk(f)=7$.
\hfill $ \diamondsuit$
\end{example}

If $f$ is a general ternary quartic of real rank $6$ then
${\rm SSP}(f)_\RR$ is an open semialgebraic set inside the threefold ${\rm VSP}(f)_\RR$.
Our next goal is to derive an algebraic description of this set.
We begin by reviewing some of the
 relevant algebraic geometry found in \cite{Kapustka, Muk92, Muk, RS, Sch}.

The Fano threefold ${\rm VSP}(f)$ in its anticanonical embedding is a subvariety of 
the Grassmannian ${\rm Gr}(4,7)$ in its Pl\"ucker embedding in $\PP^{34}$.  
It parametrizes  $4$-dimensional  subspaces of $f^\perp_3 \simeq \RR^7$
 that can serve to span $I_T$. In other words,
the Fano threefold ${\rm VSP}(f)$ represents quadruples of cubics
in $f^\perp_3 $ that arise from
a $3 \times 3 \times 4$-tensor $T$ as described above.
Explicitly,  ${\rm VSP}(f)$ is the intersection of ${\rm Gr}(4,7)$ with a  linear
subspace $\PP^{13}_A$ in $\PP^{34}$.
This is analogous to Proposition \ref{prop:Mukai35}, but more complicated.
The resolution of the apolar ideal $f^\perp$ has the form
$$0\longrightarrow S(-7)\longrightarrow S(-4)^7
\stackrel{A}{\longrightarrow} S(-3)^7\longrightarrow S\longrightarrow 0.$$
By the Buchsbaum-Eisenbud Structure Theorem, we can write $A=xA_1{+}yA_2{+}zA_3$
where $A_1,A_2,A_3$ are real skew-symmetric $7 {\times} 7$-matrices.
In other words, the matrices $A_1,A_2,A_3 $ lie in $ \bigwedge^2 f^\perp_3$.
The seven cubic generators of the ideal $f^\perp$ are the
$6 {\times} 6$-subpfaffians of~$A$.

The ambient space $\PP^{34}$ for the Grassmannian ${\rm Gr}(4,7)$
is the projectivation of the $35$-dimensional 
vector space $\bigwedge^4 f^\perp_3$. The matrices
$A_1,A_2,A_3$ determine the following subspace:
\begin{equation}
\label{eq:P13A}
\PP^{13}_A \,\,:=\,\bigl\{ U \in \bigwedge^4 f^\perp_3 \,:\, U\wedge A_1= U\wedge A_2= U \wedge A_3=0
\quad {\rm in} \,\,\,\bigwedge^6 f^\perp_3 \bigr\}.
\end{equation}
Each constraint $U  \wedge A_i = 0$ gives seven
linear equations, for a total of $21$ linear equations.

\begin{lemma} \label{prop:Mukai47}
The Fano threefold $\,{\rm VSP}(f)$ of degree $22$ is the
intersection of the Grassmannian ${\rm Gr}(4,7)$ with the
linear space $\PP^{13}_A$.
Its defining ideal is generated by $45$ quadrics, namely
the $140$
quadratic Pl\"ucker relations defining ${\rm Gr}(4,7)$
modulo the $21$ linear relations in~{\rm (\ref{eq:P13A})}.
\end{lemma}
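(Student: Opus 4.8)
The plan is to mimic the structure of the proof of Proposition~\ref{prop:Mukai35}, but with the bookkeeping adapted to the $7\times 7$ skew-symmetric setting. First I would recall, following Mukai \cite{Muk} and Ranestad--Schreyer \cite{RS}, the key observation: for a quadruple of cubics spanning a $4$-dimensional subspace $U\subset f^\perp_3$, the condition that $U$ arises as the column span of the Hilbert--Burch matrix of six points (equivalently, that the four cubics have two linear syzygies coming from a $3\times 3\times 4$ tensor $T$) can be reformulated in terms of the skew form $A$. Concretely, if $U = u_1\wedge u_2\wedge u_3\wedge u_4$ is a decomposable $4$-vector in $\bigwedge^4 f^\perp_3$, then choosing a basis of $f^\perp_3$ extending $u_1,\dots,u_4$ puts each $A_i$ into a block shape whose lower-right $3\times 3$ block vanishes precisely when $U\wedge A_i = 0$; and this block-vanishing is exactly the statement that the four generators cut out by $U$ have the requisite two linear syzygies. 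This is the analogue of the passage to the form (\ref{eq:specialA55}) in the quadric case, and it identifies ${\rm VSP}(f)$ set-theoretically with ${\rm Gr}(4,7)\cap \PP^{13}_A$.

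Next I would pin down the dimension and degree count to confirm this is the Fano threefold $V_{22}$. The Grassmannian ${\rm Gr}(4,7)$ has dimension $12$ in $\PP^{34}$, and each of the three constraints $U\wedge A_i = 0$ is a system of seven linear equations valued in $\bigwedge^6 f^\perp_3\simeq\RR^7$, for $21$ linear equations total; for general $f$ these are independent, cutting $\PP^{34}$ down to $\PP^{13}$, hence cutting ${\rm Gr}(4,7)$ down to a threefold. Its degree is the degree of ${\rm Gr}(4,7)$ in $\PP^{34}$ only after accounting for the linear section, which is the classical number $22$ — I would cite Mukai \cite{Muk92, Muk} and Schreyer \cite{Sch} for the identification with $V_{22}$ rather than recompute it. For the ideal-theoretic statement, I would argue that ${\rm Gr}(4,7)$ is projectively normal and cut out (scheme-theoretically, indeed ideal-theoretically) by its $\binom{7}{3}^2$-correction... more precisely by the $\binom{7}{2}\binom{7}{4}$-ish count — the standard fact is that ${\rm Gr}(4,7)\cong{\rm Gr}(3,7)$ is defined by $140$ quadratic Plücker relations. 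Intersecting with the linear space $\PP^{13}_A$ and reducing the ambient quadrics modulo the $21$ linear forms of (\ref{eq:P13A}) leaves $140 - 21\cdot ? $ — here I would be careful: restricting a degree-$2$ form to a linear subspace, the space of quadrics on $\PP^{13}$ has dimension $\binom{15}{2}=105$, so the $140$ Plücker quadrics restrict to a subspace, and for general $A$ exactly $45$ of them survive as independent; I would verify the number $45$ by the resolution of $V_{22}$ recorded in \cite{Sch} (its homogeneous ideal is generated by $45$ quadrics), which is precisely $140-21\cdot? $ interpreted correctly as "$140$ quadrics modulo the $21$ linear relations," matching the statement.

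The main obstacle I anticipate is the ideal-theoretic (as opposed to merely set-theoretic) assertion: showing that the $45$ quadrics genuinely generate the saturated ideal of ${\rm VSP}(f)$, and that the linear section is the expected, non-degenerate one for \emph{general} $f$. The set-theoretic identification and the dimension count are robust, but upgrading to "the defining ideal is generated by $45$ quadrics" requires either (a) invoking Schreyer's explicit description of $V_{22}$ \cite{Sch} together with a genericity argument that our $A_1,A_2,A_3$ — built from the Buchsbaum--Eisenbud structure matrix of a general $f^\perp$ — realize a general codimension-$21$ linear section of ${\rm Gr}(4,7)$, or (b) a direct Macaulay2 computation on one explicit general $f$ combined with semicontinuity. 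I would take route (a), citing \cite{Muk92, Muk, RS, Sch, Kapustka}, and reduce the genericity point to the statement that the three skew matrices $A_i\in\bigwedge^2 f^\perp_3$ span a general $3$-plane in $\bigwedge^2\RR^7\simeq\RR^{21}$ as $f$ varies — which follows because the assignment $f\mapsto(A_1,A_2,A_3)$ is dominant onto such triples (this is the converse direction noted after Proposition~\ref{prop:Mukai35}, extended to the $7\times 7$ case, namely that a general codimension-$3$ net of alternating forms on $\RR^7$ comes from a unique $f$). A secondary, purely arithmetic obstacle is confirming the count "$140 - 21 = 45$ after reduction" actually produces $45$ \emph{independent} quadrics and not fewer; I would resolve this by the one explicit computation mentioned above and semicontinuity, since on a special member the rank of the reduction map can only drop.
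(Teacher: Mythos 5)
Your proposal is correct and follows essentially the same route as the paper: the paper's proof is a two-line citation to Ranestad--Schreyer and Dinew--Kapustka--Kapustka together with the remark that the numbers $22$ and $45$ were verified by direct computation, and your write-up fleshes out the set-theoretic identification via the block-form of $A$ (which is discussed in the surrounding text of the paper rather than inside the proof) and then likewise falls back on citing Schreyer's resolution of $V_{22}$ plus a genericity/semicontinuity argument for the numbers. One small caution: as you yourself flag, the phrase ``$140$ Pl\"ucker quadrics modulo $21$ linear relations'' is not the arithmetic $140-21\cdot(\text{something})$; the correct reading is that the images of the $140$ Pl\"ucker quadrics in the quotient by the linear ideal of $\PP^{13}_A$ span a $45$-dimensional space, which is exactly what the paper verifies computationally and what Schreyer's resolution of $V_{22}$ predicts.
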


\begin{proof}
This description of the Fano threefold $V_{22}$
was given by Ranestad and Schreyer in \cite{RS} and by
Dinew, Kapustka and Kapustka in \cite[Section 2.3]{Kapustka}.
We verified the numbers $22$ and $45$ by a direct computation.
\end{proof}

It is important to note that we can turn this construction around
and start with any three general skew-symmetric $7 \times 7$-matrices 
$A_1,A_2,A_3$. Then the $6 \times 6$-subpfaffians of $xA_1+yA_2+zA_3$
generate a Gorenstein ideal whose socle generator is a ternary quartic $f$.

This correspondence shows how to go from rank $6$ decompositions of $f$ 
to points $U$ in  ${\rm VSP}(f)   = {\rm Gr}(4,7) \cap \PP^{13}_A \,\subset\, \PP^{34}$. 
Given the configuration $\mathbb{X} = \{(a_i:b_i:c_i)\}$
in (\ref{eq:admitsmany}), the point
 $U$ is the space of cubics that vanish on $\mathbb{X}$. 
 Conversely, given any point $U$ in ${\rm VSP}(f)$,  we can choose
     a basis of $f^\perp_3$ such that
      our $7 \times 7$-matrices $A_i$ have the form
analogous to   (\ref{eq:specialA55}):
$$A_i \,\, = \,\, \begin{pmatrix}
\star & T_i \, \,\\
-T_i^t&0 \,\,\\
\end{pmatrix} \qquad \hbox{for}\, \,i=1,2,3.
$$
Here $T_i$ is a $4 \times 3$-matrix, and $0$ is the zero $3 \times 3$-matrix.
The four $3 \times 3$-minors of 
the $4 \times 3$-matrix $T = xT_1+yT_2+zT_3$
are among the seven $6 \times 6$-subpfaffians of $A  = xA_1+yA_2+zA_3$.
These four cubics define the six points 
in  the decomposition (\ref{eq:admitsmany}).

We are now ready to extend the real geometry in Corollary~\ref{cor:inaffine} from quadrics to quartics. 
Let $V = (v_{ij}) $ be a $4 {\times} 3$-matrix of unknowns.
These serve as affine coordinates on ${\rm Gr}(4,7)$.
Each point is the row span of the
 $4 {\times} 7$-matrix $U = \begin{pmatrix} \,{\rm Id}_4 \!& \! V\,\, \end{pmatrix}$.
 This is analogous to (\ref{eq:affinecoord}).
 
 Proceeding as in (\ref{eq:wenowtransform}), we consider the
   skew-symmetric $7 \times 7$-matrix
\begin{equation}
\label{eq:VAV}
\begin{pmatrix}   \star & T \,\, \\       -T^t & 0\,\,
\end{pmatrix} \quad = \quad 
\begin{pmatrix}\, {\rm Id}_4 & V \ \\
                          \,      V^t &  \! - {\rm Id}_3 
  \end{pmatrix} \cdot
A \cdot \begin{pmatrix} \,{\rm Id}_4 & V\ \\
                           \,     V^t &  \! - {\rm Id}_3 
  \end{pmatrix}.
 \end{equation}
  Its entries
are linear forms in $x,y,z$ whose coefficients are quadratic polynomials
in the $12$ affine coordinates $v_{ij}$.
Vanishing of the lower right $3 \times 3$-matrix 
defines ${\rm VSP}(f)$.
  The matrix $T$ is identified with a
  $4 \times 3 \times 3$-tensor whose entries
  are quadratic polynomials in the $v_{ij}$.
  
  \begin{theorem}
  Let $f$ be a general ternary quartic of real rank $6$.
  Using the affine coordinates $v_{ij}$ on ${\rm Gr}(4,7)$, the
    threefold ${\rm VSP}(f)_\RR$ is defined by nine 
     quadratic equations
    in $\RR^{12}$.
          If $f$ has signature $(6,0)$ then ${\rm SSP}(f)_\RR $ equals
$   {\rm VSP}(f)_\RR$. If $\overline{{\rm SSP}(f)_\RR}$ is a proper subset
of ${\rm VSP}(f)_\RR$ then 
its algebraic boundary has degree $84$. It is the
      hyperdeterminant of the $4 \times 3 \times 3$-tensor~$T$.
    \end{theorem}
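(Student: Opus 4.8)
The plan is to mirror the structure of Corollary~\ref{cor:inaffine} for quadrics, now using the $V_{22}$-model from Lemma~\ref{prop:Mukai47} in place of the $V_5$-model. First I would fix a general quartic $f$ of real rank $6$ and write the resolution of $f^\perp$ as above, so that $A = xA_1+yA_2+zA_3$ with $A_1,A_2,A_3$ skew-symmetric $7\times 7$ matrices. Using the affine chart $U = (\,{\rm Id}_4 \mid V\,)$ on ${\rm Gr}(4,7)$, the constraints $U\wedge A_i = 0$ cut out $\PP^{13}_A$, and by Lemma~\ref{prop:Mukai47} the intersection with the Grassmannian is ${\rm VSP}(f)$. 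In this chart, vanishing of the lower-right $3\times 3$ block in (\ref{eq:VAV}) is exactly the condition that defines ${\rm VSP}(f)$ inside $\RR^{12}$; that block is skew-symmetric only after the transformation, so its vanishing amounts to $\binom{3}{2}\cdot$(something) --- here I would simply count: the $3\times 3$ block contributes, after accounting for the constraints already built into the $A_i$, nine independent quadratic equations in the $v_{ij}$. (This count should be checked directly, as was done for the $22$ and $45$ in Lemma~\ref{prop:Mukai47}; alternatively it follows because ${\rm VSP}(f)$ is a threefold in $\PP^{13}$, i.e.\ codimension $10$ inside $\PP^{13}_A$, but the natural presentation in the chart uses nine of the defining quadrics to cut out the affine threefold once we have already restricted to $\PP^{13}_A\cap\text{chart}\cong\RR^{12}$.) So the first assertion is the identification of ${\rm VSP}(f)_\RR$ with the real solution set of these nine quadrics.

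Next I would address the signature $(6,0)$ case. By Proposition~\ref{lem:2468}, when $C(f)$ is positive definite every real decomposition has signature $(6,0)$ and $\overline{{\rm SSP}(f)_\RR} = {\rm VSP}(f)_\RR$; moreover in the positive definite case there is no ``complex escape'' of the type produced in the proof of Theorem~\ref{thm:quadrics}, so in fact ${\rm SSP}(f)_\RR = {\rm VSP}(f)_\RR$ already at the level of the open sets (the catalecticant obstruction that forced $k = 0$ there works verbatim here, as noted in the proof of Proposition~\ref{lem:2468}). This gives the second assertion.

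For the boundary statement, I would argue as in Corollary~\ref{cor:inaffine}. A real point $V$ of ${\rm VSP}(f)_\RR$ lies in ${\rm SSP}(f)_\RR$ exactly when the six points of $\PP^2$ cut out by the $3\times 3$ minors of the associated $4\times 3$ matrix of linear forms $T = xT_1+yT_2+zT_3$ are all real. By Proposition~\ref{prop:schlafli} with $m=n=3$, the locus where two of these six points collide --- hence the locus across which the number of real points among the six can change --- is the vanishing of the hyperdeterminant ${\rm Det}(T)$ of the $4\times 3\times 3$ tensor $T$, which by Corollary~\ref{cor:schlafli} (case $m=3$) has degree $12\binom{4}{3} = 48$ in the entries of $T$. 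However, the entries of $T$ are here quadratic polynomials in the $v_{ij}$, so one must be careful: the relevant discriminantal hypersurface inside ${\rm VSP}(f)_\RR$ is the pullback of $\{{\rm Det}(T)=0\}$ along $V \mapsto T(V)$, and its degree as a hypersurface in the anticanonically embedded $V_{22}\subset\PP^{13}$ is what is claimed to be $84$. The boundary $\partial{\rm SSP}(f)_\RR$ is contained in this pullback, and since it is pure of codimension one (being a topological boundary in the sense of the introduction), its Zariski closure is a union of components of the hyperdeterminantal locus; irreducibility of ${\rm Det}(T)$ from Corollary~\ref{cor:schlafli} then forces the algebraic boundary to be the whole pullback, of degree $84$.

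The main obstacle, I expect, is the degree bookkeeping: one must show that restricting the degree-$48$ polynomial ${\rm Det}(T)$ in the $36$ matrix entries, after the quadratic substitution $T = T(V)$, and then intersecting with the Fano threefold ${\rm VSP}(f)\subset\PP^{13}$, yields a hypersurface of degree exactly $84$ in $V_{22}$ rather than something larger that drops after removing spurious components. Concretely, a naive count gives $2\cdot 48 = 96$ for the degree of the substituted polynomial as a form on the $\PP^{12}$ ambient chart; the passage to $84$ must come from a factor of the hyperdeterminant vanishing identically on (or to high order along) a locus forced by the special structure of $T$ --- for instance the stratum where the cubic surface $\det(T_1x+T_2y+T_3z)$ degenerates for reasons built into ${\rm VSP}(f)$ --- exactly paralleling the cancellation ``$24 \to 20$'' observed in Example~\ref{ex:x^2+y^2-z^2}. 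Pinning down that cancellation cleanly, or else verifying the number $84$ by a direct symbolic computation as was done for the analogous invariants in Lemma~\ref{prop:Mukai47} and the degree-$51$ Blekherman discriminant, is the crux; I would be prepared to fall back on a computer algebra verification of the degree if a conceptual explanation of the drop is not immediate.
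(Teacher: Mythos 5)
Your proposal follows essentially the same route as the paper: identify the nine quadratic equations from the vanishing of the skew-symmetric lower-right $3\times 3$ block (three independent entries, each a linear form in $x,y,z$ with three coefficients), cite Proposition~\ref{lem:2468} for the signature $(6,0)$ case, invoke Proposition~\ref{prop:schlafli} and Corollary~\ref{cor:schlafli} to identify the boundary with the $4\times3\times3$ hyperdeterminant of degree $48$ in the tensor entries, observe the naive bound $96 = 2\cdot 48$, and fall back on a direct symbolic computation to get $84$ (which is exactly what the paper does). Two small cautions. First, be precise about what ``degree $84$'' means: the paper's claim is that ${\rm Det}\bigl(T(V)\bigr)$, as a polynomial in the twelve affine coordinates $v_{ij}$, has degree $84$ --- it is \emph{not} asserting the degree of the boundary as a surface inside the anticanonically embedded $V_{22}\subset\PP^{13}$, which would be a different number (roughly $22\cdot 84$ by B\'ezout under proper intersection). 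Your phrasing about ``degree as a hypersurface in the anticanonically embedded $V_{22}$'' misreads this. Second, your argument that irreducibility of ${\rm Det}(T)$ in the $36$ tensor entries forces the algebraic boundary to be the \emph{entire} pullback is not airtight: irreducibility does not survive an arbitrary quadratic substitution $T = T(V)$, so ${\rm Det}(T(V))$ could in principle factor. The paper sidesteps this (it also does not prove irreducibility of the substituted polynomial), so this does not distinguish your argument from theirs, but you should not present the irreducibility step as a proof. Otherwise the proposal is sound and matches the paper.
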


\begin{proof}
The description of ${\rm VSP}(f)$ in affine coordinates
follows from Lemma \ref{prop:Mukai47}.
The equations in (\ref{eq:P13A})
mean that the linear map given by $A$ vanishes on the
kernel of $U$. This translates into the condition that the
lower right $3 \times 3$-matrix in (\ref{eq:VAV}) is zero.
Each of the $3$ coefficients of the $3$ upper diagonal matrix entries
must vanish, for a total of $9$ quadratic equations.

If $f$ has signature $(6,0)$ then we know from 
Proposition \ref{lem:2468} that
${\rm SSP}(f)_\RR =      {\rm VSP}(f)_\RR$.
In general, a point $(v_{ij})$ of ${\rm VSP}(f)_\RR$ lies in ${\rm SSP}(f)_\RR$ if and only if all six zeros
of the ideal $I_T$ are real points in $\PP^2$. The boundary of ${\rm SSP}(f)_\RR$ is given by those
$(v_{ij})$ for which  two of these zeros come together in $\PP^2$ and form a complex conjugate pair. 
The Zariski closure of that boundary is the hypersurface defined by the hyperdeterminant ${\rm Det}(T)$, 
by Proposition~\ref{prop:schlafli}.

The hyperdeterminant of format $4 \times 3 \times 3$ has degree $48$ in the tensor entries.
For our tensor $T$,
the entries are inhomogeneous polynomials of degree $2$, so the degree of ${\rm Det}(T)$
is bounded above by $96 = 2 \times 48$. A direct computation reveals that the
actual degree is $84$. The degree drop from $96$ to $84$ is analogous to the
 drop from $24$ to $20$ witnessed in (\ref{eq:schlafli}).
\end{proof}

At present we do not know whether the hyperdeterminantal boundary always exists:

\begin{conjecture}
If  the quartic $f$ has real rank $6$ and its signature is $(3,3)$, $(4,2)$ or $(5,1)$
then the semialgebraic set $\,\overline{{\rm SSP}(f)_\RR} $ is strictly contained in
the variety ${\rm VSP}(f)_\RR$.
\end{conjecture}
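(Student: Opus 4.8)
The plan is to reproduce, for the even degree $d=4$, the mechanism behind Theorem~\ref{thm:quadrics} for indefinite quadrics: to exhibit a point of ${\rm VSP}(f)_\RR$ that is carried by a conjugation-stable configuration of six points containing one genuine complex-conjugate pair, so that this point cannot lie in $\overline{{\rm SSP}(f)_\RR}$. The new difficulty, compared with the quadratic case, is that for $d=4$ there is no substitution as cheap as $2\ell_1^2-2\ell_2^2=(\ell_1+i\ell_2)^2+(\ell_1-i\ell_2)^2$; one must instead use the full parameter space of such ``mixed'' decompositions and keep track of the signature it produces.

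Concretely, I would fix a sign vector $(\epsilon_1,\dots,\epsilon_4)\in\{\pm1\}^4$ with exactly $k$ entries equal to $+1$ and study the real-analytic map
\[
\Phi\colon\ (\ell_1,\dots,\ell_4,\ell,\mu)\ \longmapsto\ \sum_{i=1}^{4}\epsilon_i\,\ell_i^{4}\ +\ \mu\,\ell^{4}\ +\ \overline{\mu}\,\overline{\ell}^{\,4},
\]
where $\ell_1,\dots,\ell_4\in\RR[x,y,z]_1$, $\ell\in\CC[x,y,z]_1$ and $\mu\in\CC$; its source has real dimension $20$ and its target is $\RR[x,y,z]_4\cong\RR^{15}$. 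The key bookkeeping is the signature of the middle catalecticant. For real $\ell_i=a_ix+b_iy+c_iz$ the matrix $C(\ell_i^4)$ is a positive multiple of $w_iw_i^{t}$, where $w_i=(a_i^2,a_ib_i,a_ic_i,b_i^2,b_ic_i,c_i^2)\in\RR^6$, while the conjugate pair contributes $\mu\,uu^{t}+\overline{\mu}\,\overline{u}\,\overline{u}^{\,t}$ with $u=u'+iu''\in\CC^6$ the analogous vector for $\ell$; writing this last term as
\[
\begin{pmatrix}u' & u''\end{pmatrix}\begin{pmatrix} 2\,\mathrm{Re}\,\mu & -2\,\mathrm{Im}\,\mu\\ -2\,\mathrm{Im}\,\mu & -2\,\mathrm{Re}\,\mu\end{pmatrix}\begin{pmatrix}(u')^{t}\\ (u'')^{t}\end{pmatrix},
\]
whose inner matrix has determinant $-4|\mu|^2<0$, shows that it has signature exactly $(1,1)$ as soon as $\mu\ne0$ and $[\ell]\ne[\overline{\ell}]$. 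Hence, whenever the six real vectors $w_1,\dots,w_4,u',u''$ are linearly independent, $C(\Phi(\cdots))$ is nonsingular of signature $(k+1,\,5-k)$. Taking $k=4,3,2$ produces precisely the signatures $(5,1),(4,2),(3,3)$; note that $(6,0)$ cannot arise, since only four real powers are available, in agreement with Proposition~\ref{lem:2468}.

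The next step is to show that the image of $\Phi$ contains a nonempty Euclidean-open subset $\Omega$ of the quartics of signature $(k+1,5-k)$. As the source is $20$-dimensional and the target is $15$-dimensional, it suffices to exhibit one point where the Jacobian of $\Phi$ has full rank $15$; I expect a random numerical choice of $(\ell_1,\dots,\ell_4,\ell,\mu)$ to witness this, and this is the only computation the argument needs. After shrinking, the signature count above puts $\Omega$ inside the signature-$(k+1,5-k)$ stratum, so $\Omega$ is open in that stratum. Now take $f\in\Omega$, say $f=\Phi(\ell_1,\dots,\ell_4,\ell,\mu)$ with $[\ell]\ne[\overline{\ell}]$: the configuration $\mathbb{X}=\{[\ell_1],\dots,[\ell_4],[\ell],[\overline{\ell}]\}$ is conjugation-stable, hence its radical ideal is real and, by apolarity, contained in $f^{\perp}$; for general $f\in\Omega$ the six points are in general position, so $\mathbb{X}$ determines a point $U\in{\rm VSP}(f)_\RR$. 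Two of its points are non-real, so $U\notin{\rm SSP}(f)_\RR$; moreover $\mathbb{X}$ consists of six distinct points with a genuine conjugate pair, and the six points attached to a real point $U'$ of ${\rm VSP}(f)$ vary continuously with $U'$, so every $U'$ in a neighborhood of $U$ in ${\rm VSP}(f)_\RR$ still carries a non-real pair. Consequently that neighborhood is disjoint from ${\rm SSP}(f)_\RR$, whence $U\notin\overline{{\rm SSP}(f)_\RR}$. This establishes the strict inclusion for every $f\in\Omega$, in particular for a general quartic of each of the three signatures; here the hypothesis $\rrk(f)=6$ is only used to guarantee ${\rm SSP}(f)_\RR\ne\emptyset$ (if $\rrk(f)>6$ the inclusion is trivial anyway).

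The main obstacle is to upgrade ``$f\in\Omega$'' to ``every rank-$6$ quartic $f$ of signature $(3,3),(4,2)$ or $(5,1)$''. By the previous theorem the boundary of ${\rm SSP}(f)_\RR$ inside ${\rm VSP}(f)_\RR$ is cut out by the hyperdeterminant ${\rm Det}(T)$ of the $4\times3\times3$-tensor $T$, which by Proposition~\ref{prop:schlafli} changes sign exactly where two of the six points collide and leave the real locus; so the full conjecture amounts to saying that, for every such $f$, the function ${\rm Det}(T)$ does not keep the ``six real points'' sign identically on the whole real threefold ${\rm VSP}(f)_\RR$. The property ``$\overline{{\rm SSP}(f)_\RR}\subsetneq{\rm VSP}(f)_\RR$'' is Euclidean-open in $f$ near any $f$ admitting such a point $U$, so the construction above already forces it on a nonempty open subset of each stratum; what remains is either a direct sign analysis of ${\rm Det}(T)$ on ${\rm VSP}(f)$ for an \emph{arbitrary} rank-$6$ $f$ of these signatures, or an irreducibility/connectedness statement for the rank-$6$ locus inside each signature stratum. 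That residual step is, presumably, why the statement is recorded here as a conjecture.
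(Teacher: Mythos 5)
The statement you were asked to prove is explicitly labeled a \emph{Conjecture} in the paper, stated immediately after the sentence ``At present we do not know whether the hyperdeterminantal boundary always exists.'' There is no proof of it in the paper; it is an open problem. So there is nothing for your argument to be compared against, and it could not have reproduced ``the paper's proof.''

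As to your attempt on its own terms: the construction is sound as far as it goes. The signature bookkeeping for the term $\mu\ell^4+\bar\mu\bar\ell^4$ is correct (the inner $2\times2$ matrix indeed has determinant $-4|\mu|^2<0$, so that term contributes $(1,1)$ whenever $u',u''$ are independent), and the conclusion that the mixed decomposition yields a point of ${\rm VSP}(f)_\RR$ which is isolated from ${\rm SSP}(f)_\RR$ is the right idea — it is the degree-$4$ analogue of the indefinite case of Theorem~\ref{thm:quadrics}. But as you yourself note in the last paragraph, this only establishes the strict containment for a nonempty Euclidean-open subset $\Omega$ of each indefinite signature stratum; the conjecture asserts it for \emph{every} rank-$6$ quartic of signature $(5,1)$, $(4,2)$ or $(3,3)$. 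Passing from $\Omega$ to the full stratum would require either a connectedness statement for the rank-$6$ locus inside each signature stratum together with a global ``cannot lose the witness'' argument, or a direct sign analysis of the $4\times3\times3$ hyperdeterminant ${\rm Det}(T)$ over all of ${\rm VSP}(f)_\RR$ for arbitrary such $f$; neither is supplied. Two smaller points worth flagging: the full-rank Jacobian of $\Phi$ is asserted but not exhibited (a single explicit witness is needed), and one should also check that the six points produced do lie in the Hilbert-scheme stratum parametrized by ${\rm VSP}(f)$ (no six on a conic, Hilbert--Burch shape as in the paper) so that they genuinely correspond to a point $U\in{\rm VSP}(f)_\RR$. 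None of this invalidates your partial result, but it does mean the statement remains, as recorded, a conjecture.
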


Our next tool for studying ${\rm VSP}(f)$ is another quartic curve,
derived from $f$, and endowed with a distinguished even theta
characteristic $\theta$.
 Recall that there is a unique (up to scaling) invariant of
 ternary cubics in degree $4$. This is the {\em
 Aronhold invariant}, which vanishes on cubics $g$ with $\crk(g) \leq 3$.
For the given quartic $f$,  the 
{\em Aronhold quartic} $\SSS(f)$ is defined by
\begin{equation}
\label{eq:AQ} \SSS(f)(p)\,\,:= \,\,
\hbox{the Aronhold invariant evaluated at}\,\,\partial_p(f).
\end{equation}
Following \cite{DK}, we call $f$ the {\em Scorza quartic} of $\SSS(f)$. 
Points on $\SSS(f)$ correspond to lines in the threefold ${\rm VSP}(f)$.
To see this, consider any decomposition $f=\sum_{i=1}^6 \ell_i^4$,
representing a point in ${\rm VSP}(f)$. This point lies on a 
$\PP^1 $ in $ {\rm VSP}(f)$ if and only if three of 
the lines $\ell_1,\ell_2,\ldots,\ell_6$ meet.
 Indeed, if $a \in \ell_1 \cap \ell_2 \cap \ell_3$ in $\PP^2$  then
  $\partial_p(f)$ is a sum of three cubes, i.e.~$p \in \SSS(f)$. 
 We may regard  $\ell_1,\ell_2,\ell_3$  as linear forms in two variables, so that
  $\ell_1^4+\ell_2^4+\ell_3^4$ is a binary quartic. 
  This binary quartic has a $\PP^1$ of rank $4$ decomposition, each
   giving a decomposition of $f$, with $\ell_4,\ell_5,\ell_6$ fixed. The resulting
    line in ${\rm VSP}(f) \subset  \PP^{13}_A$ is the
   set of $4$-planes $U $ containing the $\PP^3$ of cubics $ Q \cdot a$, 
   where $Q$ is a quadric vanishing on $\ell_4,\ell_5,\ell_6$.
   This gives   all lines on ${\rm VSP}(f)$.
   
One approach we pursued is the relationship   
of the real rank of $f$ with its  topology in $\PP_\RR^2$.
A classical result of Klein and Zeuthen, reviewed in
\cite[Theorem 1.7]{PSV}, states that there are six types of smooth plane
quartics in $\PP^2_\RR$, and
these types form connected subsets of $\PP^{14}_\RR$:
\begin{equation}
\label{eq:sixtypes}
 \hbox{\rm 4 ovals,  \ 3 ovals, \ 2 non-nested ovals, \ hyperbolic, \ 1 oval, \ empty curve.} 
 \end{equation}
We consider the pairs of types given by a 
 general  quartic $f$ and its Aronhold quartic $\SSS(f)$.
  
 \begin{proposition}
 Among the $36$ pairs of topological types {\rm (\ref{eq:sixtypes})} of
 smooth quartic curves in the real projective plane $\PP^2_\RR$, at least $30$
 pairs are realized by a quartic $f$ 
 and its Aronhold quartic $\SSS(f)$.
 Every pair not involving the hyperbolic type is 
 realizable as $\bigl(f,\SSS(f) \bigr)$.
  \end{proposition}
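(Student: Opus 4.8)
The statement is purely about realizability of topological types, so the natural strategy is a combination of explicit examples and a semialgebraic connectedness/continuity argument. The plan is to exploit the fact (from the Klein--Zeuthen theorem quoted as \cite[Theorem 1.7]{PSV}) that each of the six types in (\ref{eq:sixtypes}) forms a connected open subset of $\PP^{14}_\RR$, together with the fact that the Aronhold map $f \mapsto \SSS(f)$ is a polynomial (hence continuous) map on the open locus of quartics where $\SSS(f)$ is defined and smooth. First I would set up the Scorza correspondence concretely: by \cite{DK}, the map sending the Scorza quartic $f$ to its Aronhold quartic $\SSS(f)$ is a dominant rational self-map of $\PP^{14}$, generically finite. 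The key consequence I want is that the image of each topological stratum of the $f$-plane is a constructible set that meets several strata of the $\SSS(f)$-plane; combined with connectedness of strata, hitting one point in a stratum of the product is enough to conclude the whole pair of strata is ``touched'' in the weak sense claimed (``realized by a quartic $f$'').

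The core of the proof is therefore a table of explicit sample quartics. For each of the $30$ claimed pairs I would exhibit (or describe how to find by a short computer search) a rational quartic $f$ such that $f$ has the prescribed type and $\SSS(f)$, computed via (\ref{eq:AQ}) by plugging the polar conic $\partial_p(f)$ into the Aronhold invariant, has the other prescribed type. The second sentence of the proposition says every pair \emph{not} involving the hyperbolic type is realizable, i.e. all $25 = 5 \times 5$ such pairs, which leaves $5$ more pairs (those of the form $(\text{hyperbolic}, \tau)$ or $(\tau, \text{hyperbolic})$ with $\tau$ non-hyperbolic, plus possibly (hyperbolic, hyperbolic)) to reach the count $30$; I would produce five further explicit examples for those. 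A clean way to organize the search: start from quartics of the form $\sum \lambda_i \ell_i^4$ with controllable signature and perturb, using that the signature of $C(f)$ and the number of real ovals are locally constant, so one can first land in the right $f$-stratum and then sample the Aronhold quartic's type by evaluating its real topology (count real points on a fine grid, or use Sturm sequences on the discriminant locus). Sage/Macaulay2 handles each instance in seconds.

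The main obstacle is the hyperbolic type. A hyperbolic quartic has a very constrained real geometry (nested pair of ovals, or rather a pseudo-configuration with a convex inner oval), and it is not a priori clear which types can arise as $\SSS(f)$ when $f$ is hyperbolic, nor which $f$ produce a hyperbolic $\SSS(f)$. I expect the relation between hyperbolicity of $f$ and hyperbolicity of the polar conics $\partial_p(f)$ (which governs $\SSS(f)$ through the Aronhold invariant) to be subtle --- this is precisely the phenomenon that makes the real rank story for quartics hard, as Theorem~\ref{Algebraic Boundary} already signals. So I would not try to prove that \emph{all} $11$ pairs involving hyperbolicity occur; instead I would only claim the $5$ that I can verify by example, which is exactly why the proposition asserts ``at least $30$'' rather than ``all $36$''. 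Concretely: take the hyperbolic quartic $f_0$ from the pencil in the proof of Theorem~\ref{Algebraic Boundary} and a few of its small perturbations, compute $\SSS(f_0)$, read off its type, and similarly run the search in the reverse direction (sample $f$ with hyperbolic $\SSS(f)$ by deforming toward the discriminant of $\SSS$). Each successful instance, by the connectedness of the Klein--Zeuthen strata, certifies one entry of the $6\times 6$ table, and tallying the certified entries gives the bound $30$.
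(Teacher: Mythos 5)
Your approach is essentially the same as the paper's: the proposition is proved by exhaustive computer search, producing for each realized pair an explicit quartic $f$ with $f$ and $\SSS(f)$ of the prescribed topological types, and the paper indeed just reports two sample certificates (an empty/empty pair and a 4-oval/4-oval pair) and states that such a search yielded 30 of the 36 pairs, with the six missing pairs recorded in Conjecture~\ref{conj:6missing}. One remark: the connectedness-and-continuity scaffolding you set up is superfluous for this statement. Realizability of a pair $(A,B)$ is certified directly by a single example, with no need for the strata to be connected or for the Aronhold map to behave well; connectedness would matter if you wanted to show that a pair is attained on an open set or on all of a stratum, but that is not what is asserted. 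Otherwise your search strategy (perturbing from explicit rank-$6$ quartics, tracking signature, reading off topological type by bitangent counts or discriminant-based Sturm sequences) is a reasonable implementation of exactly what the authors did.
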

 
 \begin{proof}
 This was established by exhaustive search.
 We generated random quartic curves using various
 sampling schemes, and this led to
$30$ types. The six missing types are listed in Conjecture
\ref{conj:6missing}. For a concrete example,
here is an instance where $f$ and $\SSS(f)$ are empty:
\begin{small}
$$ 
 f = 
(3 x^2 + 5 z x- 5 y x-5 z^2-3 y z)^2+(7 x^2+7 z x-7 y x+z^2-y z-5 y^2)^2+(5 x^2+ 
7 z x+7 y x-8 z^2-2 y z+2 y^2)^2 
$$
\end{small}
At the other end of the spectrum, let us consider
$$
\begin{matrix}  f & =  & 
1439 x^4+1443 y^4-2250 (x^2+y^2) z^2+3500 x^2 y^2+817 z^4-x^3 z-x^3 y
\\ & & 
-5 x^2 z^2 -7 x^2 y z-4 x z^3-6 x y z^2+x y^2 z+6 x y^3-3 y z^3+5 y^2 z^2+7 y^3 z
\end{matrix}
$$
For this quartic, both $f$ and $\SSS(f)$ have 
$28$ real bitangents, so they consist of four ovals.
 \end{proof}
  
 \begin{conjecture} \label{conj:6missing}
 If a smooth quartic $f$  on $\PP^2_\RR$
 is hyperbolic then its Aronhold quartic $\SSS(f)$ is either empty or has two ovals.
 If $f$ consists of three or four ovals then $\SSS(f)$ is not hyperbolic.
 \end{conjecture}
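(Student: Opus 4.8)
The plan is to prove both statements by combining the connectedness of the Klein--Zeuthen strata with a sign analysis of the Aronhold invariant along polar cubics, with hyperbolicity entering in an essential way. First I would set up a deformation picture. Each of the six topological types in (\ref{eq:sixtypes}) is a connected subset of $\PP^{14}_\RR$; write $\mathcal H$, $\mathcal T$, $\mathcal F$ for the (connected) loci of hyperbolic, $3$-oval and $4$-oval smooth quartics. On the dense open locus where $\SSS(f)$ is \emph{also} smooth, the topological type of $\SSS(f)_\RR$ is locally constant, hence constant on each connected component of $\mathcal H\setminus D$, $\mathcal T\setminus D$, $\mathcal F\setminus D$, where $D=\{f:\SSS(f)\ \text{singular}\}$ is a real hypersurface (throughout these strata $f$ itself is smooth, so $\mathrm{disc}(f)$ plays no role). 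The first reduction is thus to enumerate the chambers cut by $D$ inside $\mathcal H$, $\mathcal T$, $\mathcal F$, and to identify the type of $\SSS(f)_\RR$ on one representative of each. For the chamber count I would use the Scorza description recalled in the text: $\SSS(f)$ degenerates exactly when the even theta characteristic $\theta$ jumps, equivalently when a decomposition $f=\sum_{i=1}^6\ell_i^4$ acquires a special real incidence (three of the $\ell_i$ meeting). A dimension count on the variety of such configurations should show $D$ meets $\mathcal H$, $\mathcal T$, $\mathcal F$ in few enough chambers that the type of $\SSS(f)_\RR$ is pinned down once a handful of explicit quartics — such as the empty-$\SSS$ and four-oval-$\SSS$ examples in the proof of the preceding Proposition — are classified; this last part is a finite computation.

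The real content, and what I expect to be the core obstruction, is excluding the forbidden types by a genuine argument rather than by example. For the first statement I would use hyperbolicity directly. If $f$ is hyperbolic, its two nested ovals bound an annulus $A$ and an innermost disk $N$ (the nucleus), and $N$ is precisely the hyperbolicity region of $f$. For generic $p\in N$ every line through $p$ meets $f$ transversally in four real points, so by Gårding's theory of hyperbolic polynomials the polar cubic $\partial_p(f)$ is a \emph{smooth} cubic, hyperbolic with respect to $p$. Now a smooth ternary cubic has complex rank $\le 3$ only if it is $\mathrm{PGL}(3,\CC)$-equivalent to the Fermat cubic, hence has $j$-invariant $0$; but a smooth real cubic is hyperbolic only when $j>1728$, so no smooth hyperbolic cubic lies on the Aronhold hypersurface. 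Therefore the Aronhold invariant is nonvanishing, hence of constant sign, on the connected set of smooth hyperbolic cubics — a sign one computes once, say on a Hesse cubic with $\lambda<-3$. Consequently $\SSS(f)(p)=\mathrm{Aronhold}(\partial_p f)$ is nonzero on a dense open subset of $N$, and since $\SSS(f)$ is smooth (we work off $D$) a real zero in $N$ would be a sign change, contradicting constancy of the sign; hence $\SSS(f)_\RR\cap N=\emptyset$. A parallel analysis of $\partial_p(f)$ for $p$ in $A$ and in the outer M\"obius region should then force $\SSS(f)_\RR$ to be either empty or a pair of nested ovals surrounding $N$, i.e. empty or hyperbolic. \textbf{The delicate point here} is controlling the Aronhold sign on the non-nucleus regions, where $\partial_p(f)$ need not be hyperbolic and may even be singular, so one must understand exactly how a component of $\SSS(f)_\RR$ can cross $A$; I would expect to organize this through the strata of singular polar cubics inside $A$.

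For the second statement I would run the Scorza correspondence in reverse. Since $f$ is the Scorza quartic of $\SSS(f)$, hyperbolicity of $\SSS(f)$ should be fed — via the nucleus of $\SSS(f)$, the same interlacing argument applied to polar cubics of $\SSS(f)$, and a careful tracking of how the distinguished theta characteristic $\theta$ behaves under complex conjugation — into a constraint on the real bitangents and real inflection points of $f$ incompatible with $f$ being an $M$-curve (four ovals) or a next-to-$M$-curve (three ovals). \textbf{The hard part} is precisely this transfer: the Scorza map is not an involution, so one cannot simply quote the first statement, and one must show the real theta characteristic attached to $\SSS(f)$ stays of the correct type as $\SSS(f)$ ranges over its hyperbolic stratum. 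I would expect the Kollár--Schreyer description of ${\rm VSP}(f)_\RR$ — whose Fano surface of lines maps onto $\SSS(f)_\RR$, and which packages $f$, $\SSS(f)$ and $\theta$ in a single geometric object — to be the most efficient framework for making this transfer rigorous.

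Finally, having identified the type of $\SSS(f)_\RR$ on every chamber of $\mathcal H$, $\mathcal T$, $\mathcal F$, I would cross-check against the $30$ realized pairs already found by sampling in the preceding Proposition: the conjecture asserts that the $6$ pairs those samples miss are exactly the ones excluded above, so consistency of the chamber list with the experimental data would complete the argument. The two places I anticipate real difficulty are the non-nucleus sign analysis in paragraph three and the reverse Scorza transfer in paragraph four; everything else is either a connectedness argument or a bounded finite computation.
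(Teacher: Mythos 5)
The statement you are proving is labelled a \emph{Conjecture} in the paper, with no proof given; the authors establish the realizability of $30$ pairs by random sampling and propose this statement merely as a description of the $6$ pairs that never appeared. So there is no paper proof to compare against, and your proposal must be judged as a stand-alone argument.

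Your nucleus argument is a genuinely nice observation and, as far as it goes, correct: for $p$ in the innermost region $N$ of a hyperbolic quartic $f$, the polar cubic $\partial_p f$ is hyperbolic (Rolle/G\r{a}rding interlacing), a smooth hyperbolic cubic has $j>1728$ while the Aronhold quartic invariant cuts out the closure of the Fermat orbit, which meets the smooth locus only in $j=0$, and the Aronhold invariant has weight $4$ so its sign is $\mathrm{GL}(3,\RR)$-invariant; hence $\SSS(f)_\RR\cap N=\emptyset$. But from this point the argument collapses in two ways. First, the jump from ``$\SSS(f)$ avoids $N$'' to ``$\SSS(f)_\RR$ is empty or a pair of nested ovals surrounding $N$'' is unsupported: an oval of $\SSS(f)$ lying in the annulus $A$ or in the outer M\"obius region need not enclose $N$ at all (it can bound a disk inside $A$ or inside the M\"obius strip), so avoiding $N$ is compatible with essentially every topological type with up to four ovals, including a single oval or four small ovals far from $N$. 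You yourself flag the non-nucleus analysis as the ``delicate point,'' but you then assert a conclusion that the flagged gap does not deliver. Second, and more fundamentally, the conclusion you announce is the \emph{wrong one}: you claim $\SSS(f)$ must be ``empty or hyperbolic,'' whereas the conjecture asserts ``empty or two (non-nested) ovals.'' The $36-30=6$ missing pairs only balance if ``two ovals'' excludes the hyperbolic type — otherwise the count is $5$, not $6$ — so the conjecture in fact asserts that $(f\ \text{hyperbolic},\ \SSS(f)\ \text{hyperbolic})$ is one of the forbidden pairs. Your sketch, if it proved anything, would prove the opposite. For the second half of the statement you offer only the intent to ``run the Scorza correspondence in reverse'' and explicitly identify the transfer as the hard part without supplying it. In short: the only step you actually carry out ($\SSS(f)\cap N=\emptyset$) is correct but far from sufficient, the chamber-count strategy is not executed, the conclusion of part one contradicts the conjectured answer, and part two remains unaddressed.
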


We now describe the construction in \cite{Muk} 
of eight distinguished {\em Mukai decompositions}
\begin{equation}
\label{eq:specialdecomp}
 f \,\, = \,\, \ell_1^4 + \ell_2^4 + \ell_3^4 + 
\ell_{12}^4 + \ell_{13}^4 + \ell_{23}^4. 
\end{equation}

The configuration $\ell_{12},\ell_{23},\ell_{13}$ is a
{\em biscribed triangle} of the Aronhold quartic $\SSS(f)$.
Being a biscribed triangle means that $\ell_{ij}$ 
is tangent to $\SSS(f)$ at a point $q_{ij}$, 
the lines $\ell_{ij}$ and $\ell_{ik}$ meet at a point $q_i$ on the curve $\SSS(f)$,
and the line $\ell_i$ is spanned by $q_{ij}$ and $q_{ik}$.


Let $D = q_1 + q_2 + q_3 + 
q_{12} + q_{13} + q_{23}$ be a divisor on the Aronhold quartic $\SSS(f)$.
The biscribed triangle $\ell_{12} \ell_{13} \ell_{23}$
is a {\em contact cubic} \cite[\S 2]{PSV},
and $2D$ is its intersection divisor with $\SSS(f)$. 
The associated theta characteristic is given by 
$\theta \sim   q_{12} + q_{13}  + q_{1} - q_{23}$.
Each of the points $q_{12},q_{13},q_{23} \in \SSS(f)$
represents  a line on the Fano threefold ${\rm VSP}(f) \subset
\PP^{13}_A$.  The pairs $(q_{12},q_{13})$,
$(q_{12},q_{23})$, $(q_{13},q_{23})$ lie in
the {\em Scorza correspondence}, as defined in \cite{DK, Sch}.
Indeed,   the  corresponding
second derivatives of $f$ are $\ell_{12}^2$,
$\ell_{13}^2$ and $\ell_{23}^2$, so
the  lines $q_{12},q_{13},q_{23}$
on ${\rm VSP}(f)$ intersect pairwise. In fact,
there is a point of ${\rm VSP}(f)$ on all
three lines, namely~(\ref{eq:specialdecomp}).

\begin{example} \rm
We illustrate the concepts above, starting with the skew-symmetric matrix
$$
A = A_1 x + A_2 y + A_3 z \, = 
\begin{small} \begin{pmatrix}
    0  &    -x{+}y{+}3z & z  &    y+z  &   x &   -x &   0   \\
   \! x{-}y{-}3z & 0  &     x{-}y{+}3z & -x{-}3y{+}z & -x+z &  2x{-}2y &  y{-}z \\
       -z  &   -x{+}y{-}3z &  0  &    x{+}y{+}z  &  0 &   y  &   -y  \\
    -y-z  &  x{+}3y{-}z & -x{-}y{-}z  & 0  &     z &   -3z &  z   \\
     -x  &   x-z &    0 &     -z  &    0 &   0 &    0   \\
       x  &    -2x+2y &  -y  &   3z  &     0 &   0 &    0   \\
       0  &    -y+z  &   y  &    -z  &    0 &   0  &   0   
\end{pmatrix}\!.
\end{small}
$$
Its $6 \times 6$ pfaffians generate the apolar ideal $f^\perp$. Orthogonal to this is the
rank $6$ quartic
$$ f \,\,=\,\, x^4+y^4+z^4+(x+y)^4+(y+z)^4+(z+x)^4 .$$
The  upper right $4 \times 3$-block of $A$ has rank $2$ precisely on these six points 
$\ell_1,\ell_2,\ell_3,\ell_{12},\ell_{13},\ell_{23}$.
Here, $ q_1 = (-1{:}1{:}1), q_2 = (1{:}{-}1{:}1), q_3 = (1{:}1{:}{-}1),
q_{12} = (0{:}0{:}1), q_{13} = (0{:}1{:}0), q_{23} = (1{:}0{:}0)$.      
The theta characteristic $\theta$ on
the Aronhold quartic $\SSS(f)$ is defined by the contact cubic
 $ (x+y)(x+z)(y+z)$.
 This is the lower right $3 \times 3$-minor in the 
  determinantal representation
$$ \SSS(f) \,=\,
{\rm det} \! \begin{pmatrix}
\,5 x + 5 y + 5 z &  x - y   &    x - z &   y - z  \\
             x - y        &  x + y + z &      x   &    -y    \\
             x - z        &    x    & x + y + z &    z    \\
              y - z        &    -y    &      z   & x + y + z\,
              \end{pmatrix}.
              $$
This matrix is constructed from the contact cubic by the method
in  \cite[Proposition 2.2]{PSV}.
\hfill $ \diamondsuit$
\end{example}

We write
${\rm VSP}(f)^{\rm Muk}$ for the subvariety of ${\rm VSP}(f)$
given by Mukai decompositions~(\ref{eq:specialdecomp}).
Mukai \cite{Muk} showed that ${\rm VSP}(f)^{\rm Muk}$ is a finite
set with eight elements. We are interested in
$$
{\rm VSP}(f)_\RR^{\rm Muk} \,:=\,  {\rm VSP}(f)^{\rm Muk} \,\cap\, {\rm VSP}(f)_\RR
\quad {\rm and} \quad
{\rm SSP}(f)_\RR^{\rm Muk} \,:= \, {\rm VSP}(f)^{\rm Muk}\, \cap\, {\rm SSP}(f)_\RR.
$$
One idea we had for certifying $\rrk(f) = 6$ is to compute the eight points in
${\rm VSP}(f)^{\rm Muk}$. If  (\ref{eq:specialdecomp}) is fully
real for one of them then we are done. Unfortunately, this algorithm may fail.
The semialgebraic set of quartics with real Mukai decompositions
is strictly contained  in  $\mathcal{R}_4$:
 
\begin{proposition} \label{prop:mukai}
There exist quartics $f$ of real rank $6$ such that
$\,{\rm SSP}(f)^{\rm Muk}_\RR$ is  empty.
 \end{proposition}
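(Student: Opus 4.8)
The plan is to exhibit a single explicit quartic $f$ of real rank $6$ for which none of the eight Mukai decompositions (\ref{eq:specialdecomp}) is fully real, and to certify both claims by exact computation. The natural strategy is to produce a one-parameter family (or a carefully chosen deformation) that starts at a quartic for which all eight Mukai points are easy to understand, and then move to a region of $\mathcal R_4$ where the eight points have left the real locus while the real rank has not gone up. Concretely, I would start from a highly symmetric quartic — for instance a small perturbation of $f_0 = x^4+y^4+z^4+(x+y)^4+(y+z)^4+(z+x)^4$, the quartic appearing in the worked example just above, whose eight Mukai decompositions are explicit and at least one is real — and perturb it inside the signature $(6,0)$ stratum, where Proposition~\ref{lem:2468} guarantees $\overline{{\rm SSP}(f)_\RR}={\rm VSP}(f)_\RR$ and hence $\rrk(f)=6$ automatically, so that the rank bookkeeping is free.

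The key steps, in order, are: (1) fix a rational quartic $f$ with $C(f)$ positive definite, so that $\rrk(f)=6$ is immediate from Proposition~\ref{lem:2468}; (2) compute the apolar ideal $f^\perp$, extract its skew-symmetric $7\times 7$ matrix $A=xA_1+yA_2+zA_3$ via the Buchsbaum–Eisenbud structure theorem, and form the linear space $\PP^{13}_A$ of (\ref{eq:P13A}); (3) compute the Aronhold quartic $\SSS(f)$ from (\ref{eq:AQ}) and its eight even theta characteristics, equivalently its eight biscribed triangles, equivalently the eight points of ${\rm VSP}(f)^{\rm Muk}$; (4) for each of these eight points, read off the corresponding $4\times 3$ tensor $T$ (the upper-right block in (\ref{eq:VAV})) and check whether the ideal $I_T$ of $3\times3$ minors has all six zeros real in $\PP^2$, equivalently whether the cubic surface $\det(xT_1+yT_2+zT_3)$ has $27$ real lines, equivalently whether ${\rm Det}(T)<0$; (5) verify that in all eight cases at least one conjugate pair of zeros is complex, i.e. ${\rm Det}(T)>0$ (or the six points fail to be real for some more robust reason). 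Since ${\rm VSP}(f)^{\rm Muk}$ is a finite set of size $8$ defined over $\QQ$ by explicit polynomial conditions, all of this is a finite exact computation; Galois-theoretically one expects the eight points to split into a single orbit or a few orbits, and it suffices that each orbit is non-real or yields a non-real configuration.

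The main obstacle is step (3)–(4): producing the eight Mukai points honestly. Mukai's construction is not a single polynomial system one simply solves; one has to realize the eight points either through the theta characteristics of $\SSS(f)$ (via the $28$ bitangents and the combinatorics of biscribed triangles / contact cubics, as in \cite{PSV, Sch}) or, more computationally, by intersecting ${\rm VSP}(f)={\rm Gr}(4,7)\cap\PP^{13}_A$ with the closure ${\rm VSP}(f)^{\rm Muk}$, which is cut out by the condition that three of the six linear forms in (\ref{eq:admitsmany}) pass through a common point — a determinantal (incidence) condition on the $4\times3$ tensor $T$. Keeping this system small enough to solve exactly over $\QQ$, and certifying that all eight solutions give non-real point configurations, is the delicate part; one safeguard is to engineer $f$ so that the Aronhold quartic $\SSS(f)$ is the \emph{empty} real curve (this is achievable, as noted in the proposition on pairs of topological types just above), since then \emph{no} real line on ${\rm VSP}(f)$ can come from three concurrent $\ell_i$'s with a real common point on $\SSS(f)$, which forces every Mukai triangle to be non-real and hence ${\rm SSP}(f)^{\rm Muk}_\RR=\emptyset$ outright — while $C(f)$ positive definite still gives $\rrk(f)=6$. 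I expect this ``empty Aronhold quartic'' route to be the cleanest path to a rigorous proof, reducing step (5) to the single geometric observation that a real biscribed triangle of $\SSS(f)$ requires real points on $\SSS(f)$.
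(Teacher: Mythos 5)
Your preferred ``empty Aronhold quartic'' route is exactly the paper's proof: the paper exhibits an explicit quartic $f$ given by a rank-$6$ decomposition, checks by computation that the Aronhold quartic $S(f)$ has no real points, and concludes that no Mukai decomposition can be real, since the pairwise intersection points $q_{ij}$ of the real lines $\ell_i$, $\ell_j$ would then be real points of $S(f)$. The only difference is how $\rrk(f)=6$ is certified: the paper writes $f$ explicitly as a signed sum of six real fourth powers (of signature $(4,2)$), making the real rank immediate, rather than restricting to the positive-definite stratum and invoking Proposition~\ref{lem:2468} as you propose --- a choice that also sidesteps the question of whether a quartic with $C(f)\succ 0$ can have an empty Aronhold quartic, which your proposal does not verify.
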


\begin{proof}
Consider the eight  Mukai decompositions (\ref{eq:specialdecomp})  
of  the following quartic of real rank $6$:
$$ 
\begin{matrix} f &  = & 
(21x+y+9z)^4+(14x-13y+14z)^4+(13x+5y-7z)^4 
\\ & & +\,(2x-5y-13z)^4
\,-\,(12x+15y-9z)^4 \,-\,(12x+21z)^4.
\end{matrix} $$
The lines $\ell_i$ and $\ell_j$ in (\ref{eq:specialdecomp})
 intersect in the point $q_{ij} \in S(f)$.
If both lines are real then so is $q_{ij}$.
But, a computation shows that $S(f) $ has no real points.
This implies $\,{\rm SSP}(f)^{\rm Muk}_\RR = \emptyset$.
\end{proof}

\begin{figure}[h]
    \centering
          \includegraphics[scale=0.25]{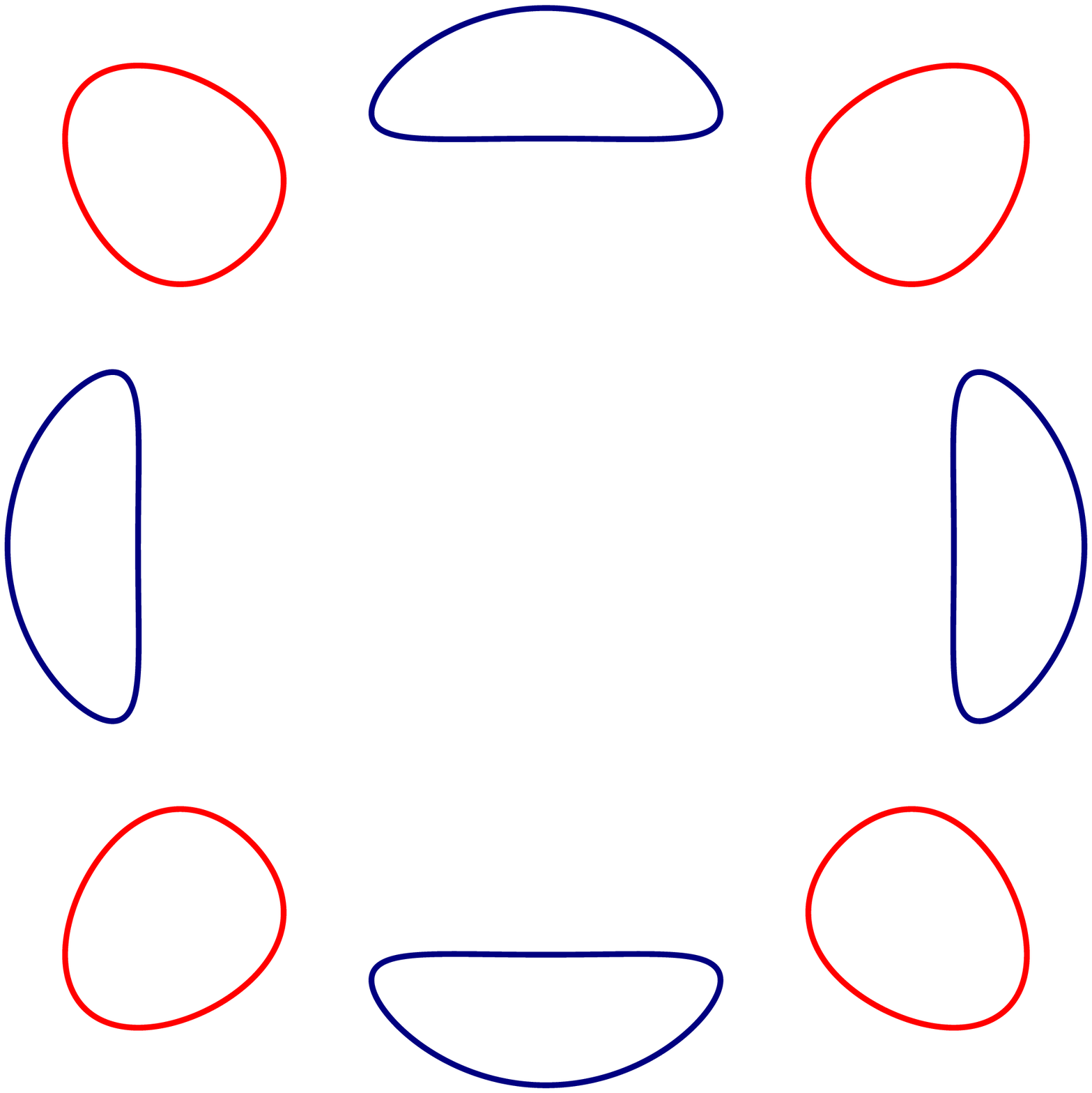} \qquad \qquad \quad
        \includegraphics[scale=0.25]{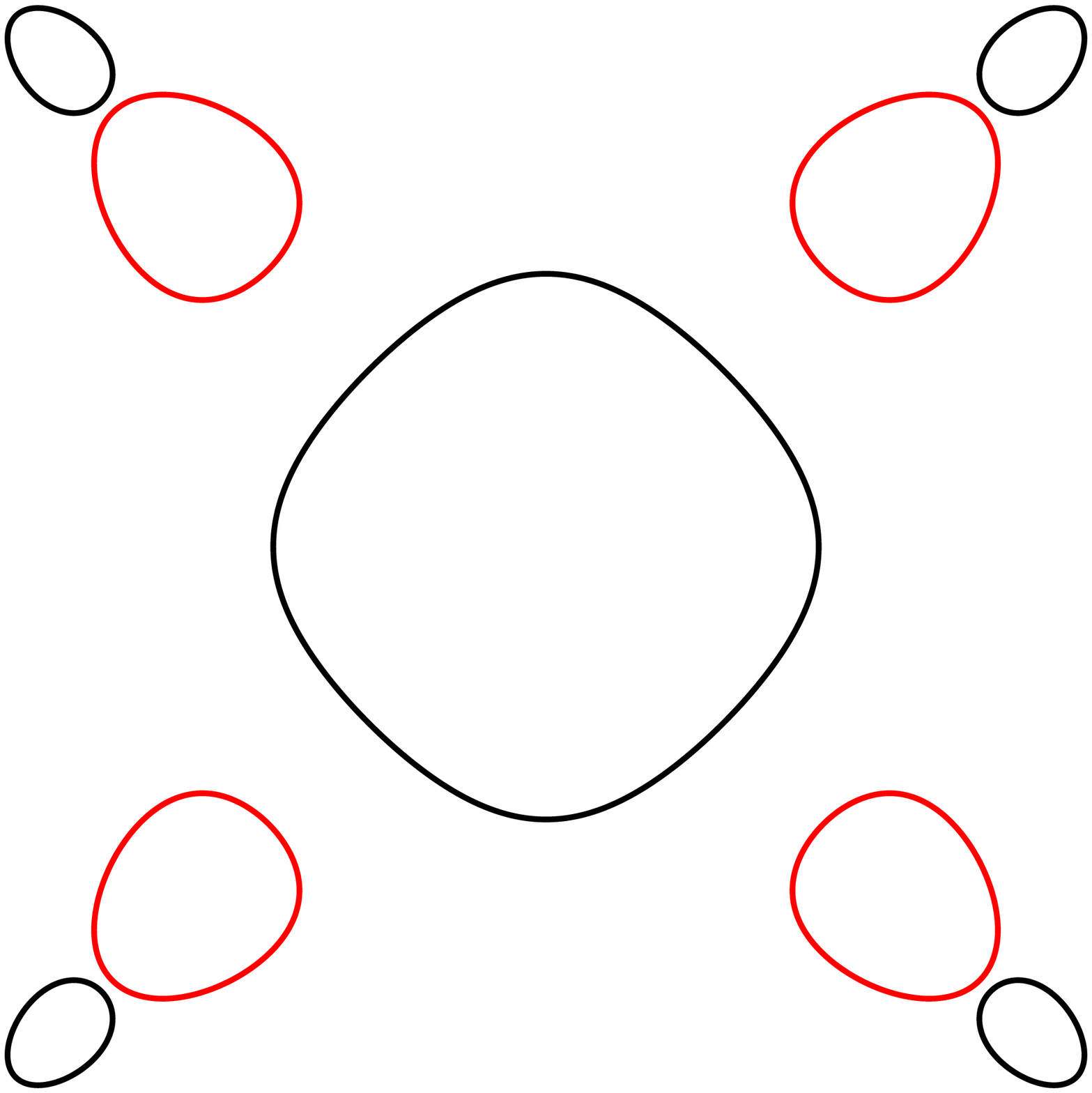}
        \caption{
 The left picture shows a Blum-Guinand quartic in blue and its Aronhold quartic in red.
 The Aronhold quartic does not meet the  sextic covariant, shown in black on the right.}
        \label{fig: BlumAronhold}
\end{figure}

\begin{example} \rm
We close this section by discussing the {\it Blum-Guinand quartics} in \cite{BlumGuinand}. Set
$$
f_{a,b,m}\,\,:=\,\,(x^2+y^2-a^2z^2-b^2z^2)(m^4x^2+y^2-m^2(a^2+b^2)z^2)+a^2b^2(m^2-1)^2z^4.
$$
The parameters $a,b,m$
satisfy $0<b<a$ and $\sqrt{b/a}< m < \sqrt{a/b}$. 
Blum-Guinand quartics have $28$ real bitangents, so they consist of four ovals.
 Figure \ref{fig: BlumAronhold} shows this for $a=20$, $b=8$ and $m=\sqrt{\frac{20}{8}}-0.1$.
  The diagram on the left has $f_{a,b,m}$ in blue and $S(f_{a,b,m})$ in red.
   The sextic covariant, defined by (\ref{eq:AQ}) but with $S$ replaced by
    the sextic invariant $T$,  is shown on the right in black.
   The cubic $ \partial_p(f_{a,b,m})$ has real rank $4$ for all $p \in S(f_{a,b,m})$;
   cf.~Remark \ref{rem:Hessereal}. 
 For the chosen parameters, for each direction there exists a line that meets the Blum quartic at 4 real points. 
   We can conclude that $\rrk(f_{a,b,m}) \geq 7$. In general, the signature is (4,2) if $\sqrt{2}-1<m<\sqrt{2}+1$ and (3,3) otherwise. 
   The picture seen in Figure \ref{fig: BlumAronhold} can change.
    For instance,      the Aronhold quartic $S(f_{a,b,m})$ has no real points
   when $a=70, b=8, m=6/5$. 
   \hfill $\diamondsuit$
   \end{example}
   
\section{Quintics and Septics}

A general ternary quintic 
$f\in \RR[x,y,z]_5$ has
complex rank $  R(5)=7$. The decomposition
 \begin{equation}
 \label{eq:fseven}
  f \,\, = \,\,
  \ell_1^5 +   \ell_2^5 +   \ell_3^5 +   \ell_4^5 +   \ell_5^5 +   \ell_6^5 +   \ell_7^5 
  \end{equation}
is unique by a classical result of Hilbert, Richmond and Palatini.
Oeding and Ottaviani \cite{OO} explained how to compute the 
seven linear forms $\ell_i$ by realizing them as  eigenvectors
of a certain $3 \times 3 \times 3$-tensor.
Inspired by \cite[\S 1.5]{RS},
we propose the following alternative~algorithm:

\begin{algorithm}\label{Decomposition of quintic}
\underbar{\rm Input}: A general ternary quintic $f$.
\underbar{\rm Output}: The decomposition {\rm (\ref{eq:fseven})}.

\begin{enumerate}
\item Compute the apolar ideal $f^{\perp}$.
It is generated by one quartic and four cubics $g_1,g_2,g_3,g_4$.
\item Compute the syzygies of $f^{\perp}$. 
Find the unique linear syzygy $(l_1,l_2,l_3,l_4)$ on the cubics.
\item  Compute a vector $(c_1,c_2,c_3,c_4) \in \RR^4 \backslash \{0\}$
that satisfies $c_1 l_1 + c_2 l_2 + c_3 l_3 + c_4 l_4 = 0$.
\item Let $J$ be the ideal  generated by the cubics  $\,c_2 g_1 - c_1 g_2$, 
$\,c_3 g_2 - c_2 g_3\,$ and $\,c_4 g_3 - c_3 g_4$. Compute the variety $V(J)$ in $\PP^2$.
It consists precisely of the points dual to $\ell_1,\ell_2,\ldots,\ell_7$.
\end{enumerate}
\end{algorithm}

To prove the correctness of this algorithm, we  recall
what is known about the ideal $J$ of seven points in $\PP^2$.
The ideal $J$ is Cohen-Macaulay of codimension $2$, so it is
generated by the maximal minors of its Hilbert-Burch matrix $T$.
According to  \cite[Theorem 5.1]{ASS}, this matrix has the following form
 if and only if no six of the seven points lie on a conic:
\begin{equation}
\label{eq:onaconic}
 T\,=\,
 \begin{pmatrix}
 q_1& l_1\\
 q_2& l_2\\
 q_3& l_3
 \end{pmatrix}.
\end{equation}
Here $l_1,l_2,l_3$ are independent linear forms and $q_1,q_2,q_3$ are 
quadratic forms in $x,y,z$.

\begin{proposition}
 Algorithm \ref{Decomposition of quintic} computes the unique decomposition of a general quintic $f$. 
 In the resulting  representation (\ref{eq:fseven}),
   no six of the seven lines $\ell_i$ are tangent to a conic. 
   \end{proposition}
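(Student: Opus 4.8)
The plan is to prove that the ideal $J$ assembled in Step~4 is exactly the homogeneous ideal $I$ of the seven points $P_i=(a_i:b_i:c_i)$ dual to the forms $\ell_i$ of the (unique) decomposition (\ref{eq:fseven}), and then to read off the statement about conics from the general position of the $P_i$.

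First I would fix the set-up. By the theorem of Hilbert, Richmond and Palatini the decomposition (\ref{eq:fseven}) is unique, so the $P_i$ are intrinsic to $f$; moreover the Waring parametrization $((p_1,\dots,p_7),(\lambda_1,\dots,\lambda_7))\mapsto\sum_i\lambda_i\ell_{p_i}^5$ is a dominant, generically finite map between spaces of the same dimension $21$ (dominance because the seventh secant variety is nondefective, generic finiteness because of uniqueness), so for general $f$ the configuration $\{P_1,\dots,P_7\}$ is general in $\mathrm{Sym}^7\PP^2$. In particular no six of the $P_i$ lie on a conic, and dualizing a conic to its envelope of tangent lines this says precisely that no six of the lines $V(\ell_i)$ are tangent to a conic --- the second assertion of the proposition. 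General position also gives the Hilbert--Burch resolution $0\to S(-4)\oplus S(-5)\to S(-3)^3\to I\to 0$ (the ``no six points on a conic'' case of \cite[Theorem 5.1]{ASS}), so $I$ is minimally generated by three cubics $h_1,h_2,h_3$ admitting a linear syzygy $\alpha_1h_1+\alpha_2h_2+\alpha_3h_3=0$ with $\alpha_i\in S_1$; and the $\alpha_i$ are linearly independent, for if they were dependent then two independent cubics in $I_3$ would be reducible with a common quadratic factor $q$, forcing at least six of the $P_i$ onto the conic $V(q)$, a contradiction.

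The heart of the argument is to identify $I_3$ as a hyperplane inside $(f^\perp)_3=\langle g_1,g_2,g_3,g_4\rangle$. Apolarity gives $I\subseteq f^\perp$, hence $I_3\subseteq(f^\perp)_3$; writing $h_i=\sum_jM_{ij}g_j$ with $M$ a $3\times 4$ scalar matrix of rank $3$, the space $I_3$ corresponds to the row space of $M$ in the basis $g_1,\dots,g_4$. Pushing the syzygy of the $h_i$ forward through $M$ produces a syzygy $\sigma_j:=\sum_i\alpha_iM_{ij}$ of $(g_1,\dots,g_4)$, which is nonzero (otherwise $(\alpha_1,\alpha_2,\alpha_3)M=0$ with $M$ of rank $3$); by uniqueness of the linear syzygy of $f^\perp$ on its cubic generators we get $(\sigma_1,\dots,\sigma_4)=c'\cdot(l_1,\dots,l_4)$ for some scalar $c'\neq 0$. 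Now substitute the relation $\sum_jc_jl_j=0$ from Step~3: it yields $\sum_i\alpha_i\bigl(\sum_jM_{ij}c_j\bigr)=0$, and since the $\alpha_i$ are independent this forces $Mc=0$; thus $\ker M=\CC\cdot(c_1,\dots,c_4)$ and therefore $I_3=\{\sum_ja_jg_j:\sum_ja_jc_j=0\}$. Finally, the three cubics $c_2g_1-c_1g_2$, $c_3g_2-c_2g_3$, $c_4g_3-c_3g_4$ visibly lie in this hyperplane and are linearly independent (for general $f$ all $c_i$ are nonzero, so the relevant $3\times 4$ ``staircase'' matrix has rank $3$), hence they span $I_3$; since $I$ is generated in degree $3$, we conclude $J=(I_3)=I$, so $V(J)=\{P_1,\dots,P_7\}$ and the algorithm is correct.

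The main obstacle I anticipate is the genericity bookkeeping: making precise that a general quintic yields a configuration $\{P_i\}$ in general position, which is what licenses both the \cite{ASS} resolution of $I$ and the independence of the $\alpha_i$, and separately that $c_1c_2c_3c_4\neq 0$ for general $f$ --- equivalently, that no three of $l_1,l_2,l_3,l_4$ are linearly dependent --- which is most cleanly settled by exhibiting one explicit quintic and invoking semicontinuity. Once the resolution of $f^\perp$ (one quartic and four cubics, with a unique linear syzygy on the cubics) is granted, everything else is linear algebra over $S$.
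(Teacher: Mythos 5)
Your proof is correct, and it takes a genuinely different route from the paper's. The paper starts from the Buchsbaum--Eisenbud structure theorem: it takes the skew-symmetric $5\times 5$ syzygy matrix $A$ of $f^\perp$ with one linear row/column, performs a congruence by the $5\times 5$ elementary matrix $U$ built from the vector $(c_1,c_2,c_3,1,0)$, and observes that in $A'=UAU^t$ the $4\times 4$ pfaffians using the last two rows collapse to the $2\times 2$ minors of a $3\times 2$ matrix $T$ with a linear column $(l_1,l_2,l_3)^t$. That $T$ \emph{is} a Hilbert--Burch matrix for $J$ then gives both halves of the proposition at once: $J$ is a saturated codimension-$2$ ideal of degree $7$ inside $f^\perp$ (hence, by apolarity and uniqueness, the ideal of the decomposition), and by \cite[Theorem 5.1]{ASS} the linear column being made of independent forms says exactly that no six of the dual points lie on a conic. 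Your argument instead works directly with the ideal $I$ of the seven points: you invoke general position of the configuration (via dominance of the Waring map) to get ``no six on a conic'' for free, feed this into \cite[Theorem 5.1]{ASS} to get the Hilbert--Burch resolution of $I$, and then reduce the identification $J=I$ to a piece of linear algebra: push the linear syzygy of $I$ through the $3\times 4$ coordinate matrix $M$ for $I_3\subset(f^\perp)_3$, compare with the unique linear syzygy $(l_1,\dots,l_4)$, and conclude $\ker M=\CC\cdot(c_1,\dots,c_4)$, so $I_3$ is precisely the hyperplane $\{\sum a_jg_j:\sum a_jc_j=0\}$ that the algorithm's staircase generators span. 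The two proofs buy different things: the paper's matrix congruence is uniform with the machinery it uses in the quadric/quartic/sextic sections (and exhibits the Hilbert--Burch matrix explicitly), whereas yours avoids the skew-symmetric matrix entirely and makes the role of the constraint $\sum a_jc_j=0$ transparent; the price you pay is the extra genericity bookkeeping you flag (independence of the $\alpha_i$, nonvanishing of the $c_i$), which the paper sidesteps by choosing the generators $g_i-c_ig_4$ rather than the staircase combinations.

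One small point worth tightening in your version: the conclusion that $J=(I_3)=I$ needs $I$ to be generated in degree $3$, which you do get from the Hilbert--Burch resolution you invoked, but it is worth saying explicitly since it is the step where ``spanning $I_3$'' becomes ``equals $I$''.
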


\begin{proof}
Let $f$ be a general quintic. The apolar ideal $f^{\perp}$
in $S$ is generated by four cubics 
$g_1,g_2,g_3, g_4$ and one quartic $h$. This  ideal is Gorenstein of codimension $3$.
 The Buchsbaum-Eisenbud Structure Theorem implies that the
minimal free resolution of $f^{\perp}$ has the form 
$$ 0 \longrightarrow S(-8)\longrightarrow S(-4)\oplus S(-5)^4 
\stackrel{A}{\longrightarrow} 
S(-4)\oplus S(-3)^4 \stackrel{B}{\longrightarrow} S \longrightarrow 0.
$$
The matrix $A$ is skew-symmetric of size $5\times 5$, i.e. 
\begin{equation}
\label{eq:Nmatrix}
 A \,\,=\,\,
\begin{pmatrix}
   0 & q_{12} & q_{13}& q_{14} & l_1\\
   -q_{12} & 0 & q_{23} & q_{24} & l_2\\
   -q_{13} & -q_{23} & 0 & q_{34} & l_3\\
   -q_{14} & -q_{24} & -q_{34} & 0 & l_4\\
   -l_1 & -l_2 & -l_3 & -l_4 & 0
\end{pmatrix}.
\end{equation}
\noindent Here the $l_i$'s are linear forms and 
the $q_{ij}$'s are quadrics. As described above and
in Section~2, we should find a $5\times5$ matrix $U$ such that the lower right $2\times2$ submatrix of $U\cdot A\cdot U^t$ is the zero matrix. Since $f$ is general, we may assume that 
the $l_i$ span $\RR[x,y,z]_1$. After relabeling if necessary, we can write
 $c_1l_1+c_2l_2+c_3l_3+l_4=0$ for some scalars $c_1,c_2,c_3$.
 Setting $c_4 = 1$, these are  the scalars
   in Step 3 of 
Algorithm  \ref{Decomposition of quintic}.~Let
$$ U\, = \,
\begin{pmatrix}
1&0&0&0&0\\
0&1&0&0&0\\
0&0&1&0&0\\
c_1&c_2&c_3&1&0\\
0&0&0&0&1
\end{pmatrix}. 
$$
\noindent We perform row and column operations by the following right and left multiplication:
$$
A'\,\,= U\cdot A \cdot U^t=\,\,\begin{pmatrix}
   0 & q_{12} & q_{13}& q_{14}' & l_1\\
   -q_{12} & 0 & q_{23} & q_{24}' & l_2\\
   -q_{13} & -q_{23} & 0 & q_{34}' & l_3\\
   -q_{14}' & -q_{24}' & -q_{34}' & 0 & 0\\
   -l_1 & -l_2& -l_3& 0 & 0
\end{pmatrix}.
$$
The inverse column operation on the row 
vector $B$ of minimal generators of $f^{\perp}$ gives
$$
B'\,=\,
\begin{pmatrix}
g_1 \! & \! g_2 \! & \! g_3 \! & \! g_4\!& \! h
\end{pmatrix} \!
\begin{pmatrix}
1&0&0&0&0\\
0&1&0&0&0\\
0&0&1&0&0\\
\! -c_1& \! -c_2&\! -c_3&1&0\\
0&0&0&0&1
\end{pmatrix} \,=\,
\begin{pmatrix}
 g_1{-}c_1g_4& g_2{-}c_2 g_4& g_3{-}c_3 g_4 & g_4&h
\end{pmatrix}\!.
$$

 Let $\,J = \langle g_i -c_i g_4 \,:\,i=1,2,3\rangle\,$
  denote the ideal generated by the first three cubics.
 This is the ideal in Step 4 of the algorithm.
 We claim that $V(J)$ consists of seven points in $\PP^2$.

By construction, we have $B' \cdot A' = 0$, and the columns of $A'$
span the syzygies on $B'$. The entries of $B'$ are the 
$4 \times 4$ subpfaffians of $A'$. The first three entries
are the subpfaffians that involve the last two rows and columns.
These three $4 \times 4$ pfaffians are the $2 \times 2$-minors~of
$$
T\,\,=\,\begin{pmatrix}
   q_{14}' & l_1 \\
   q_{24}' & l_2\\
   q_{34}' & l_3
\end{pmatrix}.
$$
\noindent This is a Hilbert-Burch matrix for the ideal $J$.
Hence $J$ is
  an ideal of seven points in $\PP^2$. Moreover, since $l_1, l_2$ and $l_3$ are linearly independent, no six points of them lie on a conic. 
  Dually, this means that no six of the seven lines $\ell_i$ used in
(\ref{eq:fseven}) are tangent to a conic.
\end{proof}
 
It is easy to decide whether the real rank of a given ternary quintic $f$ is $7$ or not.
Namely, one computes the unique complex decomposition
(\ref{eq:fseven}) and checks whether it is real.
The real rank boundary corresponds to transition points
where two of the linear forms in (\ref{eq:fseven}) come together
and form a complex conjugate pair. The following is
our main result on quintics.

\begin{theorem}
\label{thmfseventan}
The algebraic boundary $\partial_{\rm alg}( \mathcal{R}_5)$
of the set $\mathcal{R}_5 = \{f:\rrk(f) = 7\}$ is an irreducible
hypersurface of degree $168$ in the $\PP^{20}$ of quintics.
It has the parametric representation
\begin{equation}
\label{eq:fseventan}
g \,\, = \,\,
\ell_1^5 + \ell_2^5 + \ell_3^5 + \ell_4^5 + \ell_5^5 \,+ \, \ell_6^4 \ell_7 ,
\qquad
\hbox{where $\,\ell_1,\dots,\ell_7 \in \RR[x,y,z]_1$.}
\end{equation}
\end{theorem}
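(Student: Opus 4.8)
The plan is to identify $\partial_{\rm alg}(\mathcal{R}_5)$ with the Zariski closure of the image of the rational map $\Phi$ that sends seven general linear forms to the quintic $g$ in (\ref{eq:fseventan}), and then to compute the degree of that image. The starting point is the structural fact that drives this whole section: since $\crk(f)=R(5)=7$ for a general quintic and its rank-$7$ decomposition is unique (equivalently ${\rm VSP}(f)$ is a single reduced point, by Hilbert, Richmond, Palatini and \cite{Mel}), the unique length-$7$ apolar subscheme $\Gamma_f\subset\PP^2$ of $f$ satisfies $\rrk(f)=7$ if and only if $\Gamma_f$ consists of seven distinct real points. Hence a general quintic can leave $\mathcal{R}_5$ only through the wall where two points of $\Gamma_f$ collide and then reappear as a complex conjugate pair: at a general point $f$ of $\partial\mathcal{R}_5$ the scheme $\Gamma_f$ is non-reduced, and so $\partial_{\rm alg}(\mathcal{R}_5)$ is contained in the Zariski closure of $\{f:\Gamma_f\ \text{non-reduced}\}$.

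Next I would isolate the unique codimension-one component of that locus. A general non-reduced length-$7$ subscheme of $\PP^2$ is the disjoint union of five reduced points and one length-$2$ (double) point, and a quintic whose apolar scheme has this shape is precisely one of the form (\ref{eq:fseventan}), the summand $\ell_6^4\ell_7$ being the apolar (inverse-system) contribution of a double point supported at the point dual to $\ell_6$ with tangent direction dual to $\ell_7$. A dimension count in $\mathrm{Hilb}_7(\PP^2)$ (which is smooth of dimension $14$) shows that the non-reduced divisor has a component of dimension $13$ consisting of five reduced points together with one double point, while the loci containing a triple point, or two double points, are of codimension at least $2$; combining this with the incidence correspondence $\{(\Gamma,f):I_\Gamma\subset f^\perp\}$, whose projection to the $\PP^{20}$ of quintics is generically one-to-one by uniqueness of ${\rm VSP}$, one sees that only the first type sweeps out a hypersurface, the others remaining of codimension at least $2$. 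Equivalently, $\overline{\mathrm{im}\,\Phi}$ is the join of the fifth secant variety of $\nu_5(\PP^2)$ with its tangential variety $\tau(\nu_5(\PP^2))$, of dimension $14+4+1=19$, hence a hypersurface, and it is irreducible because its parameter space is. That this hypersurface really is the topological boundary, not merely an ambient component, is checked at a general $g$ of the form (\ref{eq:fseventan}): splitting the double point of $\Gamma_g$ into two nearby real points yields quintics of real rank $7$, while splitting it into a conjugate pair yields quintics whose unique complex decomposition is not real, hence of real rank at least $8$; so $g\in\partial\mathcal{R}_5$. Together with the irreducibility and the codimension count this gives $\partial_{\rm alg}(\mathcal{R}_5)=\overline{\mathrm{im}\,\Phi}$.

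Finally the degree. The first point is that $\Phi$ is birational onto its image: a general boundary quintic has a \emph{unique} double point in its apolar scheme, again because $\Gamma_f$ is the single point of ${\rm VSP}(f)$, a property that extends by specialization from the interior to the generic point of this boundary component. Granting that, $\deg\partial_{\rm alg}(\mathcal{R}_5)$ equals the number of tuples $(\ell_1,\dots,\ell_7)$ (up to the evident relabelings and rescalings) whose associated quintic (\ref{eq:fseventan}) lies on a general line in $\PP^{20}$, i.e.\ the degree of the join $J\bigl(\sigma_5(\nu_5(\PP^2)),\tau(\nu_5(\PP^2))\bigr)$. This can be obtained either by intersection-theoretic bookkeeping, pushing the class of the non-reduced divisor of $\mathrm{Hilb}_7(\PP^2)$ forward along the birational projection to quintic space, or, more expediently, by a numerical computation: sample a general quintic $g$ on the parametrization, run a monodromy loop over the space of quintics to enumerate all decompositions of the shape (\ref{eq:fseventan}) lying over a general pencil, and certify the count with a trace test. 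Either route yields the value $168$. I expect this last step to be the main obstacle: producing the exact value $168$, rather than merely the statement that the degree is finite, positive, and hence codimension one, requires either a delicate Chern class computation on $\mathrm{Hilb}_7(\PP^2)$ or a carefully engineered numerical certificate, whereas everything preceding it is soft geometry resting on the uniqueness of the rank-$7$ decomposition.
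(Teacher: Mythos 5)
Your structural argument is essentially the same as the paper's, just phrased in Hilbert-scheme and join-of-secant-varieties language (a perspective the paper itself adopts in its more general Conjecture about $\sigma_{r-2}(X)\star\tau(X)$). Both arguments rest on the same two pillars: the uniqueness of the rank-$7$ apolar scheme $\Gamma_f$, so that $\rrk(f)=7$ exactly when $\Gamma_f$ is seven distinct real points, and the observation that the double-point degeneration $\ell_6^4\ell_7$ admits both a real and a complex-conjugate perturbation, putting a generic $g$ of the form (\ref{eq:fseventan}) on the topological boundary. The paper establishes codimension $1$ by a direct Jacobian-corank computation rather than by your dimension count in ${\rm Hilb}_7(\PP^2)$, but those are interchangeable observations.

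The real gap is the degree. You correctly flag this as the hard part, but what you offer is a menu of strategies (a Chern-class pushforward, or numerical monodromy plus a trace test), not a proof, and you assert that ``either route yields the value $168$'' without carrying either out. The paper's computation is different from both of your suggestions and is worth noting because it does double duty. They restrict to a generic pencil $f_1+tf_2$, run the quintic-decomposition algorithm over $\QQ(t)$ to obtain the seven apolar points, eliminate each variable to get three binary septics with coefficients in $\QQ[t]$, and take the GCD of their discriminants. That GCD $\Psi(t)$ has degree $168$ and is \emph{irreducible} over $\QQ$. The irreducibility is the crucial extra output: since $\Phi(f_1+tf_2)$ is a nonconstant divisor of $\Psi$, one gets $\deg\Phi=168$ and, at the same time, that $\partial_{\rm alg}(\mathcal{R}_5)$ has no component other than $Y$. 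Your proposal instead infers $\partial_{\rm alg}(\mathcal{R}_5)=\overline{\mathrm{im}\,\Phi}$ from a purely geometric stratification of ${\rm Hilb}_7(\PP^2)$, which is plausible but needs care (one must rule out other codimension-one walls not captured by the non-reduced locus, e.g.\ behavior at non-generic apolar schemes), and then needs a separate, genuinely carried-out computation of the degree. As written, the degree step is a placeholder, and without it the theorem is not proved.
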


\begin{proof}
The parametrization (\ref{eq:fseventan}) defines a unirational variety $Y$ in $\PP^{20}$.
The Jacobian of this parametrization is found to have corank $1$. This means
that $Y$ has codimension $1$ in $\PP^{20}$.
 Hence $Y$ is an irreducible hypersurface, defined by a unique (up to sign)
 irreducible homogeneous polynomial $\Phi$ in $21$ unknowns,
 namely the coefficients of a ternary quintic.

Let $g$ be a real quintic (\ref{eq:fseventan})
that  is a general point in $Y$. For $\epsilon \rightarrow 0$, the real quintics
$(\ell_6+\epsilon\ell_7)^5-\ell_6^5$ and 
$(i\ell_6+\epsilon \ell_7)^5+(-i\ell_6+\epsilon \ell_7)^5$
converge to the special quintic $\ell_6^4 \ell_7$  in $\PP^{20}_\RR$.
Hence any small neighborhood of $g$ in $\PP^{20}_\RR$ contains 
quintics of real rank $7$ and quintics of real rank $\geq 8$.
This implies that $Y$ lies in the 
algebraic boundary $\partial_{\rm alg}( \mathcal{R}_5)$.
Since $Y$ is irreducible and has codimension $1$, it follows
that $\partial_{\rm alg}( \mathcal{R}_5)$ exists and has
$Y$ as an irreducible component.

We carried out an explicit computation
 to determine that the (possibly reducible)
hypersurface $\partial_{\rm alg}( \mathcal{R}_5)$ has degree $168$.
This was done as follows.
Fix the field $K = \mathbb{Q}(t)$, where $t$ is a new variable.
We picked random quintics $f_1$ and $f_2$  in $\mathbb{Q}[x,y,z]_5$,
and we ran Algorithm~\ref{Decomposition of quintic} for  $f = f_1 + t f_2  \in K[x,y,z]_5$.
Step 4 returned a homogeneous ideal $J$ in $K[x,y,z]$ that defines
$7$ points in $\PP^2$ over the algebraic closure of $K$.
By eliminating each of the three variables, we obtain
binary forms of degree $7$ in $K[x,y]$, $K[x,z]$ and $K[y,z]$.
Their  coefficients are polynomials of degree $35$ in $t$.
The discriminant of each binary form is a polynomial in $\mathbb{Q}[t]$ of degree $420 = 12 \times 35$.
The greatest common divisor of these three discriminants is a
polynomial $\Psi(t)$ of degree $168$. We checked that $\Psi(t)$ is
  irreducible in $\mathbb{Q}[t]$.

By definition, $\Phi$ is an irreducible
homogeneous polynomial with integer coefficients in the $21$ coefficients
of a general quintic $f$. Its specialization $\Phi(f_1 + t f_2)$ is
a non-constant polynomial in $\mathbb{Q}[t]$, of degree ${\rm deg}(X)$ in $t$.
That polynomial divides $\Psi(t)$. Since the latter is irreducible, we conclude that
 $\Phi(f_1 + t f_2) = \gamma \cdot \Psi(t)$, where $\gamma$ is
 a nonzero rational number. Hence $\Phi$ has degree $168$.
We conclude that ${\rm deg}(Y) = 168$, and therefore
$ Y = \partial_{\rm alg}( \mathcal{R}_5)$.
\end{proof}

Theorem \ref{thmfseventan} was stated for a very special 
situation, namely ternary quintics. We shall now describe
a geometric generalization.
Let $X$ be any irreducible projective variety in the
complex projective space $\PP^N$ that is defined over $\RR$
and whose real points are Zariski dense.
We set $d = {\rm dim}(X)$.
The {\em generic rank} is the smallest integer $r$
such that the $r$th secant variety $\sigma_r(X)$ equals $\PP^N$.
Given $f \in \PP^N$, we define
 ${\rm VSP}_X(f)$
to be the closure in the Hilbert scheme ${\rm Hilb}_r(X)$
of the set of configurations of $r$ distinct points in $X$ whose span contains $f$.
Now, ${\rm VSP}$ stands for {\bf v}ariety of {\bf s}ums of {\bf p}oints.
This object agrees with that studied by Gallet, Ranestad and Villamizar in \cite{GRV}.
It differs from more inclusive definitions seen in other articles.
In particular, if $N = \binom{d+2}{2}-1$ and $X = \nu_d(\PP^2)$
is the $d$th Veronese surface then
${\rm VSP}_X(f) = {\rm VSP}(f)$. In this case, we recover the familiar
{\bf v}ariety of {\bf s}ums of {\bf p}owers.

The objects of real algebraic geometry studied in this paper
generalize in a straightforward manner.
We write ${\rm VSP}_X(f)_\RR$ for the variety of real points
in ${\rm VSP}_X(f)$, and we define ${\rm SSP}_X(f)_\RR$ to be
the semialgebraic subset of all $f$ that lie in an $(r-1)$-plane
spanned by $r$ real points in $X$. 
Following  Blekherman and Sinn \cite{BS}, we are interested in
 generic points in $\PP^N_\RR$ whose real rank equals the generic complex rank.
 These comprise the semialgebraic set
$\mathcal{R}_X = \{f \in \PP^N_\RR : {\rm SSP}_X(f)_\RR \not= \emptyset \}$.
The topological boundary $\partial \mathcal{R}_X$ is 
the closure of $\mathcal{R}_X$ minus the interior of that  closure.
If $X$ has more than one typical real rank then
$\partial \mathcal{R}_X$ is non-empty and its Zariski closure
$\partial_{\rm alg}( \mathcal{R}_X)$ is a hypersurface in $\PP^N$.
This hypersurface is the {\em real rank boundary} we are interested in.

\begin{example}
\label{ex:fifthvero}
 \rm
Let $N = 20$ and $X = \nu_5(\PP^2)$ the fifth Veronese surface in $\PP^{20}$.
Then $r= 7$ and $\partial_{\rm alg} (\mathcal{R}_X)$
equals the irreducible hypersurface of degree $168$ 
described in Theorem~\ref{thmfseventan}.
\hfill $ \diamondsuit$
\end{example}

This example  generalizes as follows. Let $X \subset \PP^N$ as above, 
and let $\tau(X)$ denote its {\em tangential variety}. By definition,
$\tau(X)$ is the closure of the union of all lines that are tangent to $X$.
We also consider the $(r-2)$nd secant variety $\sigma_{r-2}(X)$. 
The expected dimensions  are
 $$ {\rm dim}\bigl(\tau(X)\bigr) = 2 d \quad  {\rm and} \quad
 {\rm dim}\bigl(\sigma_{r-2}(X) \bigr) =  (r-2) d + r-3  . $$
We are interested in the join of the two varieties, denoted
$\sigma_{r-2}(X) \star \tau(X)$.
This is an irreducible projective variety of expected dimension $rd + r -2 $ in $\PP^N$.
It comes with a distinguished parametrization,
 generalizing that in (\ref{eq:fseventan}) for the
Veronese surface of Example~\ref{ex:fifthvero}.
The following generalization of Theorem~\ref{thmfseventan}
explains the geometry of the real rank boundary:
    
\begin{conjecture} \label{conj:star}
Suppose $rd+r = N$ and $\,{\rm VSP}_X(f)$ is finite for general $f$. Then
$\sigma_{r-2}(X) \star \tau(X)$ is an irreducible component of
$\partial_{\rm alg} (\mathcal{R}_X)$.
Equality holds when ${\rm VSP}_X(f)$ is a point.
\end{conjecture}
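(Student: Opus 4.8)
The plan is to realize $Y:=\sigma_{r-2}(X)\star\tau(X)$ as the ``generic collision'' component of the branch divisor of the Waring parametrization, and then to match that divisor with the real rank boundary by exhibiting, near a general real point of $Y$, both rank $r$ real decompositions and non-real ones. First I would fix the parametrization: a general point of $Y$ is $f=p_1+\cdots+p_{r-2}+v$, where $p_1,\dots,p_{r-1}\in X$ and $v$ is a tangent vector to $X$ at $p_{r-1}$; since the real points of $X$ are Zariski dense, the $p_i$ and the tangent direction of $v$ can be taken real, so $Y(\RR)$ is Zariski dense in $Y$. The hypothesis that ${\rm VSP}_X(f)$ is finite forces the incidence variety $\mathcal{I}=\{(g,\{x_1,\dots,x_r\}):g\in\langle x_1,\dots,x_r\rangle,\ x_i\in X\}$ to map generically finitely onto $\PP^N$ and makes $Y$ a hypersurface in $\PP^N$. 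Let $\Delta\subset\PP^N$ be the branch divisor of $\mathcal{I}\to\PP^N$. A local computation on $\mathcal{I}$, differentiating the condition $g\in\langle x_1,\dots,x_r\rangle$ along $\mathcal{I}$, shows that a general point of $\Delta$ is a configuration with exactly one colliding pair of points, i.e.\ $Y$ is one irreducible component of $\Delta$, the remaining components coming from configurations in special linear position (e.g.\ on a low-degree curve); when ${\rm VSP}_X(f)$ is a single point the Waring map is birational, and then $\Delta=Y$ exactly.

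Next I would prove $\partial_{\rm alg}(\mathcal{R}_X)\subseteq\Delta$. If $f_0\in\partial\mathcal{R}_X$ is general, then every neighborhood contains $g$ with $\rrk(g)=r$ and $g'$ with $\rrk(g')>r$; following the finitely many fibers of $\mathcal{I}\to\PP^N$ along a real arc from $g$ to $g'$, the number of fully real fibers must drop, and at a general such $f_0$ this happens because two real points of one fiber merge and re-emerge as a complex conjugate pair. At the instant of merging the corresponding configuration has a repeated point, so $f_0\in\Delta$; since $\Delta$ is Zariski closed, $\partial_{\rm alg}(\mathcal{R}_X)\subseteq\Delta$.

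For the reverse inclusion on the component $Y$ I would run the two degenerations used in the proof of Theorem~\ref{thmfseventan}. Write the tangential summand as $v=\lim_{\epsilon\to 0}\frac{1}{\epsilon}(q_\epsilon-p_{r-1})$ for a real analytic arc $q_\epsilon\in X(\RR)$ with $q_0=p_{r-1}$: then $f=\lim_{\epsilon\to 0}\bigl(p_1+\cdots+p_{r-2}+w_\epsilon\bigr)$ with $w_\epsilon$ on the real secant line through $p_{r-1}$ and $q_\epsilon$, so each approximant lies in the span of $r$ real points of $X$ and has $X$-rank $r$ generically, whence $f\in\overline{\mathcal{R}_X}$. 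Using instead a conjugate pair of complex arcs $q_{i\epsilon},\overline{q_{i\epsilon}}\in X(\CC)$ tangent to the same direction yields real points $f_\delta\to f$ each carrying a rank $r$ complex decomposition with a genuine non-real conjugate pair. If ${\rm VSP}_X(f_\delta)$ is a single point this is \emph{the} decomposition of $f_\delta$, so $\rrk(f_\delta)\ge r+1$ on a whole neighborhood, $f_\delta$ lies in the interior of $\{\,\rrk>r\,\}$, and hence $f\notin{\rm int}(\overline{\mathcal{R}_X})$; together with $f\in\overline{\mathcal{R}_X}$ this places a Zariski dense subset of $Y$ in $\partial\mathcal{R}_X$, so $Y\subseteq\partial_{\rm alg}(\mathcal{R}_X)$. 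Combining with $\Delta=Y$ in the birational case gives $Y=\partial_{\rm alg}(\mathcal{R}_X)$ when ${\rm VSP}_X(f)$ is a point, and $Y$ is a codimension one irreducible component in general.

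The main obstacle is the last step when ${\rm VSP}_X(f)$ is finite but not a point: then $f_\delta$ may possess another, \emph{real}, rank $r$ decomposition among the finitely many, so it is no longer clear that $\rrk(f_\delta)>r$, and one would need to control the real structure of the entire finite fiber of $\mathcal{I}\to\PP^N$ near a general point of $Y$ — for instance, to show that the branch of decompositions being degenerated is the only relevant one on a neighborhood, or that the ``extra'' real decompositions occur only over a proper subvariety of $Y$. This is exactly where the statement remains conjectural. A secondary, more technical obstacle is to show that the ``special position'' components of $\Delta$ (such as six points on a conic in the septic case) do not contribute a further real boundary component, so that $Y$ is the component of $\partial_{\rm alg}(\mathcal{R}_X)$ detected by the generic collision.
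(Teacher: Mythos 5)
This statement is a conjecture in the paper; the authors give no proof, and in fact explicitly explain immediately after it why it remains open. Your proposal therefore cannot be matched against a proof, but it can be matched against the paper's discussion of the difficulty, and there the comparison is close: your outline is essentially the argument the authors carry out in the proven subcases. The real and complex degenerations you describe (a real secant arc collapsing to a tangent vector, versus a conjugate pair of complex arcs tangent to the same direction) are exactly the two limits $(\ell_6+\epsilon\ell_7)^5-\ell_6^5$ and $(i\ell_6+\epsilon\ell_7)^5+(-i\ell_6+\epsilon\ell_7)^5$ used in the proof of Theorem~\ref{thmfseventan} for quintics, where ${\rm VSP}_X(f)$ is a single point, and the same idea drives the septic case. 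Your claim that for the ``${\rm VSP}$ is a point'' case one concludes $\rrk(f_\delta)\ge r+1$ on a whole neighborhood, hence $Y\subseteq\partial_{\rm alg}(\mathcal{R}_X)$, is sound and is in effect what the paper proves for quintics.

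The gap you isolate as your ``main obstacle'' is precisely the obstruction the authors state after the conjecture: when ${\rm VSP}_X(f)$ is finite but not a point, one cannot control the interaction among the several rank-$r$ decompositions, so the existence of one non-real decomposition near $f$ does not exclude another, fully real one, and the argument that $f\notin{\rm int}(\overline{\mathcal{R}_X})$ breaks. You have named the right difficulty concretely. There is, however, a second gap the paper records that your write-up glosses over: the authors note they do not even know that $\partial_{\rm alg}(\mathcal{R}_X)$ is non-empty in this generality, i.e.\ that $r+1$ is a typical real rank for arbitrary $X$. Your inclusion $\partial_{\rm alg}(\mathcal{R}_X)\subseteq\Delta$ is vacuous if the boundary is empty, and your argument that $Y$ actually lies in the boundary is exactly what fails in the non-${\rm VSP}$-point case, so non-emptiness is not established. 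Both your ``primary'' and ``secondary'' obstacles, plus this non-emptiness issue, are the content of the paper's paragraph following Conjecture~\ref{conj:star}, which is why the statement is a conjecture and not a theorem. One last cosmetic point worth flagging: the numerical hypothesis in the conjecture reads $rd+r=N$, but for $\sigma_r(X)=\PP^N$ with finite generic fibers and for $\sigma_{r-2}(X)\star\tau(X)$ to have expected codimension one, the relation should be $rd+r-1=N$ (as is the case for the quintic, $d=2$, $r=7$, $N=20$, and the septic, $d=2$, $r=12$, $N=35$); your dimension count implicitly uses the correct relation.
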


One difficulty in proving this conjecture is that we do not know 
how to control interactions among the distinct decompositions 
$f = f_1 + \cdots + f_r$ of a general point $f \in \PP^N$ into $r$ points
$f_1,\ldots,f_r $ on the variety $ X$. Moreover, we do not even
know that $ \partial_{\rm alg}( \mathcal{R}_X)$ is non-empty.

To illustrate Conjecture \ref{conj:star}, we prove it in 
the case when $X$ is the $7$th Veronese surface.
The parameters are $d=2, N = 35$, and $r=12$. We return to the previous
notation, so  $f $ is a general ternary form in $\RR[x,y,z]_7$.
Here we can show that $13$ is indeed a typical real rank.

\begin{proposition}
The real rank boundary 
$\partial_{\rm alg}( \mathcal{R}_7)$
is a non-empty hypersurface in $\PP^{35}$ with one of the components equal to the join of the tenth secant variety and the tangential variety.
\end{proposition}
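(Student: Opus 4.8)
The plan is to specialize the proof of Theorem~\ref{thmfseventan} and the reasoning behind Conjecture~\ref{conj:star}. Here $X=\nu_7(\PP^2)\subset\PP^{35}$ has $\dim X=2$, the generic rank is $r=R(7)=12$, and $\mathrm{VSP}(f)$ is finite (five points, by Table~\ref{tab:VSPs of ternary forms}). The join $Z:=\sigma_{10}(X)\star\tau(X)$ carries the distinguished parametrization $(\ell_1,\dots,\ell_{12})\mapsto \ell_1^7+\cdots+\ell_{10}^7+\ell_{11}^6\ell_{12}$, generalizing (\ref{eq:fseventan}). It suffices to prove: (i) $Z$ is a genuine irreducible hypersurface in $\PP^{35}$; and (ii) $Z\subseteq\partial_{\rm alg}(\mathcal{R}_7)$. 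Given these, $Z$ is an irreducible component of $\partial_{\rm alg}(\mathcal{R}_7)$, which is therefore non-empty, and hence a hypersurface, since $\partial\mathcal{R}_7$ is pure of codimension~$1$ whenever non-empty; equivalently, $R(7)+1=13$ is a typical real rank.

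For (i): $Z$ is the image of the irreducible parameter space $(\RR[x,y,z]_1)^{12}$ under the polynomial map above, hence irreducible. To see that it has codimension exactly one, I would compute the rank of the Jacobian of this map at a random real point and check that its corank is $1$; equivalently, one records that $\sigma_{10}(\nu_7(\PP^2))$ and $\tau(\nu_7(\PP^2))$ are non-defective (Alexander--Hirschowitz, resp.\ a direct check), and that their join is non-defective, so that $\dim Z = 29+4+1 = 34 = \dim\PP^{35}-1$.

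For (ii): fix a general $g=\ell_1^7+\cdots+\ell_{10}^7+\ell_{11}^6\ell_{12}$ in $Z$; then $\crk(g)=12$, and the real points of $Z$ form a real hypersurface near $g$ with two sides. On one side the tangent vector $\ell_{11}^6\ell_{12}$ resolves into two real points: the real septics
$$g_t \;:=\; \ell_1^7+\cdots+\ell_{10}^7+\tfrac{1}{7t}\bigl((\ell_{11}+t\ell_{12})^7-\ell_{11}^7\bigr)$$
are sums of twelve real seventh powers, converge to $g$ as $t\to 0$, and satisfy $\rrk(g_t)=12$ for generic parameters; hence $g\in\overline{\mathcal{R}_7}$. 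On the other side it resolves into a complex-conjugate pair: the real septics
$$h_t \;:=\; \ell_1^7+\cdots+\ell_{10}^7+\tfrac{1}{7it}\bigl((\ell_{11}+it\ell_{12})^7-(\ell_{11}-it\ell_{12})^7\bigr)$$
also converge to $g$, and $\crk(h_t)\le 12$ via the conjugate pair $\ell_{11}\pm it\ell_{12}$. One then argues $\rrk(h_t)\ge 13$ for generic parameters: the five complex decompositions of the general septic $h_t$ deform continuously from the five points of $\mathrm{VSP}(g)$; the distinguished one uses the genuine conjugate pair and is not all-real, and the remaining four are not all-real either for general $g\in Z$. Hence every neighbourhood of $g$ contains an open set of septics of real rank $>12$, so $g\in\partial\mathcal{R}_7$; since $g$ ranges over a dense subset of the irreducible hypersurface $Z$, this yields $Z\subseteq\partial_{\rm alg}(\mathcal{R}_7)$.

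The main obstacle is the last claim of (ii): controlling the four non-distinguished decompositions of a general $g\in Z$ — exactly the interaction among distinct sums-of-powers decompositions flagged after Conjecture~\ref{conj:star}. Unlike the quintic case of Theorem~\ref{thmfseventan}, where $\mathrm{VSP}(f)$ is a point and one can even pin down the entire boundary by a degree computation, here $\mathrm{VSP}(f)$ has five points, so a priori an extra real decomposition of $h_t$ could keep it inside $\mathcal{R}_7$. Since we only need $Z$ to be \emph{one} component, it is enough to certify $\rrk(h_t)\ge 13$ along the family, which I would do numerically: compute $\mathrm{VSP}(h_t)$ for a random rational instance from the above parametrization via an apolarity computation and check that each of the four length-$12$ subschemes of the apolar ideal other than the degenerate one carries non-real points.
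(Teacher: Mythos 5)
Your proposal is correct and follows essentially the same strategy as the paper's own proof: parametrize the join $\sigma_{10}(X)\star\tau(X)$, use the two-sided (real pair vs.\ complex-conjugate pair) resolution of the degenerate term $\ell_{11}^6\ell_{12}$ to exhibit nearby septics of real rank $12$ and of real rank $\geq 13$, and settle the behavior of the four non-degenerate decompositions by an explicit apolarity computation at a random rational instance. The paper carries out exactly this certification by exhibiting a concrete septic in the join and reporting that four of its five decompositions fail to be fully real, stably under perturbation, which is the numerical check you flagged as the main obstacle.
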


\begin{proof}
For a general septic $f$, the
minimal free resolution of the apolar ideal $f^\perp$ equals
\begin{equation}
\label{eq:10640}
0\longrightarrow S(-10)\longrightarrow S(-6)^5
\stackrel{A}{\longrightarrow} S(-4)^5\longrightarrow S\longrightarrow 0.
\end{equation}
This is as in (\ref{eq:5320}), but now 
the entries of the skew-symmetric $5 \times 5$-matrix $A$
are quadratic. To find the five rank $12$ decompositions of $f$,
we proceed  as in Example \ref{ex:x^2+y^2-z^2}: we solve the matrix
equation (\ref{eq:wenowtransform}). The matrix entry in position $(4,5)$
is a homogeneous quadric in $x,y,z$ whose
six coefficients are inhomogeneous quadrics in the unknowns $a,b,c,d,e,g$.
These coefficients must vanish. This system of six equations
in six unknowns defines ${\rm VSP}(f)$ in $\CC^6$. It
has precisely five solutions. For each of these solutions
we consider the upper right $3 \times 2$-matrix~$T$. Its entries
are quadrics in $x,y,z$. The $2 \times 2$-minors of $T$ define 
the desired $12$ points in~$\PP^2$. 

We apply the above algorithm to the point in
$\,\sigma_{10}(X) \star \tau(X)\,$ given by the septic
$$ \begin{matrix}
f &:= & (11x+13y-12z)(-18x+13y-16z)^6+(2x-12y+z)^7
+(3x-13y-7z)^7\\ & & +(-6x+5y+15z)^7 
 +(16x+5y+14z)^7+(18x-19y-9z)^7
+(-4x+10y-18z)^7 \\ & & +(11x+9y+10z)^7 
 +(-19x+15y-z)^7+(-4x-20y-16z)^7+(2x+20y-18z)^7.
\end{matrix}
$$
This septic $f$ has complex rank $12$, but its real rank is larger. The output
of our decomposition algorithm shows
 that four of the five decompositions are not fully real. This remains
 true in a small neighborhood of $f$. Near the point $(11x+13y-12z)(-18x+13y-16z)^6 $ in the tangential variety $ \tau(X)$,
 some septics have real rank $2$ and some others have real rank $3$.
  Hence, the above decomposition of $f$ can change from purely real to a decomposition that contains complex linear forms. The same holds for all nearby points in the join variety.
  We conclude that a general point of the join in a small neighborhood of $f$ 
   belongs to $\partial( \mathcal{R}_7)$.
\end{proof}

Our algorithm for septics $f$ computes the five elements
in ${\rm VSP}(f)$ along with the $12$ linear forms in each of the  five decompositions
$f = \sum_{i=1}^{12} \ell_i^7$. It outputs $60$ points in $\PP^2$.
These come in $5$ unlabeled groups of $12$ unlabeled points in $\PP^2$.
Here are two concrete instances.

\begin{example} \rm
First, consider the septic $f=\sum_{i=1}^{12}\ell_i^7$ of real rank $12$ that is defined by $$
\begin{matrix}
\ell_1=-7x+14y+3z, & \ell_2= -13x-12y+20z,& \ell_3=-7x-5y-18z, \\
\ell_4=12x-16y+17z, & \ell_5=-8x+16y+7z,&\ell_6=15x-8y+2z ,\\
\ell_7=13x-7y-11z ,& \ell_8=-x-3y-3z, & \ell_9=18x-4y-7z ,\\
\ell_{10}=19x-9y+7z ,& \ell_{11}=15y-17z ,& \ell_{12}=-19x-2y+11z.
 \end{matrix}
 $$
 Here, all $60$ points in $\PP^2$ are real. This means that
 the variety of sums of powers is fully real,
 and the twelve $\ell_i$ in each of the five decompositions are real:
 ${\rm VSP}(f) = {\rm VSP}(f)_\RR={\rm SSP}(f)_\RR$. 
 
Second, consider the real septic 
$f=\sum_{i=1}^6(\ell_i^7+\bar{\ell}_i^7)$ defined by the complex linear forms
$$
\begin{matrix}
\ell_1=8x+11y-15z+i(13x+15y+17z) , & \ell_2=-16x+6y+11z+i(-4x+19y+16z),\\
\ell_3=5x+13y-3z+i(-2x+4y+5z) ,&\ell_4=8x+8y-7z+i(-13x-12y-8z),\\
\ell_5=-5x-20y-15z+i(-8x+18y+7z),& \ell_6=-14x-18y-7z+i(-9x-2y+19z).
\end{matrix}
$$
This satisfies  ${\rm VSP}(f)_\RR \neq {\rm SSP}(f)_\RR=\emptyset$. 
None of the $60$ points in $\PP^2$ are real.
Our two examples are extremal.
One can easily find other septics $f$ 
with ${\rm VSP}(f)_\RR \neq {\rm SSP}(f)_\RR$.
\hfill $\diamondsuit$
\end{example}

\section{Sextics}

We now consider ternary forms of degree six.
The generic complex rank for sextics is $R(6) = 10$.
Our first result states that both $10$ and $11$ are 
typical real ranks, in the sense of \cite{BBO, Ble, CO}.

\begin{theorem} \label{thm:severi}
The algebraic boundary 
$\partial_{\rm alg}( \mathcal{R}_6)$ is a hypersurface
in the $\PP^{27}$ of ternary sextics. One of its irreducible
components is the dual to the
Severi variety of rational sextics.
\end{theorem}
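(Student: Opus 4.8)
The plan is to exhibit a specific irreducible subvariety of the algebraic boundary $\partial_{\mathrm alg}(\mathcal{R}_6)$ coming from the same mechanism that governed quintics and septics: a decomposition of a general sextic into ten sixth powers degenerates on $\partial\mathcal{R}_6$ precisely when two of the ten linear forms collide into a complex conjugate pair, which on the configuration side means the ten points in $\PP^2$ specialize to a configuration where two points merge. For quintics this produced the variety with parametrization $\ell_1^5+\cdots+\ell_5^5+\ell_6^4\ell_7$, i.e. the join $\sigma_{r-2}\star\tau$ of Conjecture \ref{conj:star}. For sextics $R(6)=10$ but ${\rm dim}({\rm VSP}(f))=2$, so the naive count $rd+r=N$ of that conjecture fails ($10\cdot2+10=30\ne 27$), and the ``next-to-generic'' locus is not a hypersurface by that parametrization alone. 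Instead, the relevant degeneration is geometrically the appearance of a \emph{node} in the $0$-dimensional apolar scheme: the ten points move onto a rational sextic curve, and the limiting schemes are cut out by apolarity to a sextic form that lies on the dual of the Severi variety $\mathrm{Sev}$ of irreducible plane rational sextics. So the key claim to prove is that $(\mathrm{Sev})^\vee\subset\PP^{27}$ is an irreducible component of $\partial_{\mathrm alg}(\mathcal{R}_6)$.

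First I would set up the apolarity dictionary precisely in this degree: for general $f\in\RR[x,y,z]_6$ the apolar ideal $f^\perp$ is artinian Gorenstein of socle degree $6$, and by the Apolarity Lemma a rank-$10$ decomposition $f=\sum_{i=1}^{10}\lambda_i\ell_i^6$ corresponds to a length-$10$ reduced subscheme $\Gamma\subset\PP^2$ with $I_\Gamma\subset f^\perp$. The two-dimensional family of such $\Gamma$ is ${\rm VSP}(f)$, Mukai's K3 surface $V_{38}$ (Table \ref{tab:VSPs of ternary forms}). The boundary of ${\rm SSP}(f)_\RR$ inside ${\rm VSP}(f)_\RR$ occurs where two of the ten real points of $\Gamma$ come together; the limiting object is a length-$10$ subscheme with an embedded tangent (length-$2$) point, equivalently a scheme supported on $9$ reduced points plus one double point, and such a scheme is apolar to a sextic $f$ exactly when the $9+1$ points impose dependent conditions — the Hurwitz/discriminant condition. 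The point I want to extract: as $f$ itself varies, the locus of sextics admitting such a ``node in the apolar scheme'' that is a limit of a fully real configuration is swept out by sextics apolar to a scheme lying on a singular (nodal) plane sextic curve, and the envelope of these — the Zariski closure in $\PP^{27}$ — is the dual variety of the Severi variety parametrizing irreducible rational (i.e. nodal, $\delta=10$) plane sextics. I would make this precise by a dimension count: $\mathrm{Sev}$ is the closure in $\PP(\CC[x,y,z]_6)=\PP^{27}$ of irreducible sextics with $10$ nodes, of dimension $27-10=17$, so a general hyperplane tangent to it — i.e. a general point of $(\mathrm{Sev})^\vee$ — is a hypersurface, $\dim (\mathrm{Sev})^\vee = 26$, as a boundary component must be.

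The concrete steps are: (1) show $(\mathrm{Sev})^\vee$ is a hypersurface, i.e. that the Severi variety of rational sextics is not dual-defective; this should follow because a general nodal rational sextic is not ruled and its Gauss map is generically finite, or by a direct contact-locus computation. (2) Construct an explicit degeneration: pick a general $f\in\mathcal{R}_6$ with a fully real rank-$10$ decomposition, move one linear form $\ell_{10}\to\ell_9+\epsilon m$ so that $\Gamma$ acquires a tangent direction at $\ell_9$; check that the limiting scheme is still apolar to a sextic $f_0$ and that $f_0$ lies on $(\mathrm{Sev})^\vee$ — the contact being realized by the nodal sextic through the $9$ simple points doubled appropriately. (3) Show that near such $f_0$ there are genuinely sextics of real rank $\ge 11$: perturb $\ell_9+\epsilon m$ in the ``wrong'' direction so that the two colliding points become a conjugate pair, as in the quintic argument $(i\ell+\epsilon m)^6 + \overline{(\cdot)}$; since ${\rm VSP}(f_0)$ is finite-to-one over the relevant stratum (here one should use that the real decompositions near a generic boundary point are locally isolated modulo this one degeneration), conclude $f_0\in\partial\mathcal{R}_6$. (4) Irreducibility: $\mathrm{Sev}$ is irreducible (Harris's theorem on Severi varieties), hence so is its dual, so $(\mathrm{Sev})^\vee$ is a single irreducible component.

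The main obstacle is step (3), exactly as flagged in the paragraph after Conjecture \ref{conj:star}: controlling the interaction among the distinct real rank-$10$ decompositions of a sextic near $f_0$, so as to certify that real rank actually jumps above $10$ and is not merely achieved by some other branch of the two-dimensional ${\rm VSP}(f)$. For quintics the uniqueness of the decomposition made this automatic; for sextics ${\rm VSP}(f)$ is a K3 surface and one must argue that the fully-real locus ${\rm SSP}(f)_\RR$ genuinely has a boundary along the Severi-dual stratum rather than remaining all of ${\rm VSP}(f)_\RR$ or dodging the degeneration via another component. I expect this to be handled by a semicontinuity argument on the signature of the middle catalecticant together with an explicit numerical example (a rational sextic with $10$ real nodes versus one with some non-real nodes) certifying that the real rank does exceed $10$ on one side, which is how the analogous statements were proved for $d=4,5,7$ in the excerpt.
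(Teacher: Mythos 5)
Your proposal takes a fundamentally different route from the paper's, and it has a genuine gap that would prevent it from closing. The paper's proof is a convex-geometry argument in the spirit of sums-of-squares duality: it works with the cone $P_{3,6}$ of nonnegative sextics, its SOS subcone $\Sigma_{3,6}$, and their duals, the key being that $\Sigma_{3,6}^\vee$ is the spectrahedron $\{C(f)\succeq 0\}$ and $P_{3,6}^\vee$ is the Veronese orbitope of sums of sixth powers. Using Proposition~\ref{lem:2468}, a general $f$ in $\Sigma_{3,6}^\vee\setminus P_{3,6}^\vee$ has positive definite catalecticant, so any rank-$10$ decomposition would force $f\in P_{3,6}^\vee$; hence $\rrk(f)\geq 11$ there. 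The paper then quotes \cite[Theorem~2]{BHORS} that the algebraic boundary of $P_{3,6}^\vee$ consists of $\det C(f)$ together with $\mathrm{Sev}^\vee$, and uses Choi--Lam--Reznick (\cite[Proposition~7]{BHORS}) to produce a full-dimensional subset of $P_{3,6}^\vee$ with $\rrk=10$ hugging the non-spectrahedral boundary component. That is what makes $\mathrm{Sev}^\vee$ a component of $\partial_{\rm alg}(\mathcal{R}_6)$.

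Your proposal instead tries to replay the quintic/septic degeneration argument inside ${\rm VSP}(f)$: two of the ten points collide, and you assert that as $f$ varies, the envelope of these degenerations is $\mathrm{Sev}^\vee$. This assertion is the gap, and you never justify it. The collision of two points in an apolar length-$10$ scheme of $f$ has no evident connection to $f$ lying on the dual of the Severi variety of rational plane sextic \emph{curves}. Your phrase ``the ten points move onto a rational sextic curve'' is not supported by anything in your argument: the ten points are a $0$-dimensional scheme in the domain $\PP^2$, while the Severi variety lives in the $\PP^{27}$ of sextic forms, and there is no apolarity-based reason the collision locus should project to $\mathrm{Sev}^\vee$. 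The paper's link to the Severi variety comes entirely from \cite{BHORS}'s description of the extreme rays of $P_{3,6}\setminus\Sigma_{3,6}$ and is a fact about nonnegative forms and SOS boundaries, not about VSP degenerations. You also correctly flag step~(3) as unresolved --- controlling the other branches of the two-dimensional ${\rm VSP}(f)$ --- but that is a secondary issue; the primary problem is that even if (3) were handled, the resulting boundary component is only identified as \emph{some} hypersurface, with no reason to equal $\mathrm{Sev}^\vee$. The convex-duality machinery that actually identifies the component is what your proposal is missing.
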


\begin{proof}
We use notation and results from \cite{BHORS}.
Let $P_{3,6}$ denote the convex cone of nonnegative rational sextics
and $\Sigma_{3,6}$ the subcone of sextics that are sums of squares of cubics
over $\RR$. The dual cone $\Sigma_{3,6}^\vee$ consists of sextics $f$
whose middle catalecticant $C(f)$ is positive semidefinite.
Its subcone $P_{3,6}^\vee $ is spanned by sixth powers 
of linear forms. It is known as the {\em Veronese orbitope}.
The difference $\Sigma_{3,6}^\vee \backslash P_{3,6}^\vee$
is a full-dimensional semialgebraic subset of $\RR[x,y,z]_6$.

We claim that general sextics $f$ in that set have real rank $\geq 11$.
Let $f $ be a general sextic in $\Sigma_{3,6}^\vee \backslash P_{3,6}^\vee$.
Suppose that $\rrk(f) = 10$. The middle catalecticant $C(f)$ is
positive definite. Proposition~\ref{lem:2468} tells us that the signature of
any representation  (\ref{eq:waring}) is $(10,0)$. This means that
$f$ lies in the Veronese orbitope $P_{3,6}^\vee$. This is a contradiction
to the hypothesis, and we conclude $\rrk(f) \geq 11$.
Using \cite[Theorem 1.1]{BBO}, this means that $11$ is a typical rank.

Consider the algebraic boundary of the Veronese orbitope $P_{3,6}^\vee$.
One of its two components is the determinant of the catalecticant $C(f)$, which
is the algebraic boundary of  the spectrahedron $\Sigma_{3,6}^\vee$.  The other component 
is the dual of the Zariski closure of the set 
of extreme rays of $P_{3,6} \backslash \Sigma_{3,6}$. That  Zariski closure was 
shown in \cite[Theorem 2]{BHORS} to be equal to the Severi variety of rational 
sextics, which has codimension $10$ and degree $26312976$ in $\PP^{27}$.
Every generic boundary point of $P_{3,6}^\vee$ that is not in 
the spectrahedron $\Sigma_{3,6}^\vee$
represents a linear functional whose maximum over $P_{3,6}$
occurs at a point in the Severi variety.

A result of Choi, Lam and Reznick (cf.~\cite[Proposition 7]{BHORS}) states that 
every general supporting hyperplane of $P_{3,6}^\vee$ touches the Veronese
surface in precisely $10$ rays. Every form in the cone spanned
by these rays has real rank $\leq 10$. Consider the subset of $P_{3,6}^\vee$
obtained by replacing each of the $10$ rays by a small neighborhood.
This defines a full-dimensional subset of forms $f \in P_{3,6}^\vee$ 
that satisfy $\rrk(f) = 10$. By construction, this subset must intersect the boundary of $P_{3,6}^\vee$
in a relatively open set. Its Zariski closure is the hypersuface dual to the Severi variety.
We conclude that this dual is an irreducible component of 
$\partial_{\rm alg}( \mathcal{R}_6)$.
\end{proof}

\begin{remark} \rm
The same proof applies also for octics $(d=8)$,
ensuring that  the algebraic boundary 
$\partial_{\rm alg}( \mathcal{R}_8)$ exists.
Indeed, $R(8) = 15$ coincides with the size of the middle
catalecticant $f$, and we can conclude that
every octic in  $\Sigma_{3,8}^\vee \backslash P_{3,8}^\vee$
has real rank bigger than $15$.
However, for even integers $d \geq 10$, this
argument no longer works, because the generic
complex rank exceeds the size of the middle catalecticant.
In symbols, $R(d) > \binom{d/2+2}{2}$. New ideas are
needed to establish the existence of the hypersurface
$\partial_{\rm alg}( \mathcal{R}_d)$  for $d \geq 9$.
\end{remark}

We record the following upper bounds on the real ranks of general ternary forms.
 
\begin{proposition}
Let $m(d)$ be the maximal typical rank of a ternary form of degree $d$. 
Then $$ m(d)\,\,\leq \,\,  {\rm min} \bigl(  \,\binom{d+1}{2}-2, \,2 R(d) \,\bigr) . $$
In particular, typical real ranks for ternary sextics are between $10$ and $19$. 
\end{proposition}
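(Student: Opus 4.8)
The plan is to establish the two stated upper bounds separately, since the quantity $m(d)$ is dominated by whichever of $\binom{d+1}{2}-2$ and $2R(d)$ is smaller.

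First I would handle the bound $m(d) \leq 2 R(d)$. The key observation is that any typical real rank $r$ must satisfy $r \leq 2\,\crk(f)$ for a general $f$; indeed, if $f = \sum_{i=1}^{R(d)} \lambda_i \ell_i^d$ is a general complex decomposition with $\ell_i \in \CC[x,y,z]_1$, then writing each $\lambda_i \ell_i^d$ as a real form plus (or minus) $i$ times a real form, and each $\ell_i = u_i + i v_i$ with $u_i, v_i$ real linear forms, one expands $\ell_i^d = (u_i + i v_i)^d$ and collects the real and imaginary parts. Since $f$ itself is real, the imaginary parts must cancel, and the real part of $\ell_i^d$ together with the real part of $(\bar\ell_i)^d$ (appearing when we symmetrize $f = \mathrm{Re}(f)$) lie in the span of $d+1$ real powers; but more economically, each conjugate pair $\lambda_i \ell_i^d + \bar\lambda_i \bar\ell_i^d$ is a real ternary form of real rank at most $2$ by the binary-form argument applied to the pencil spanned by $u_i$ and $v_i$ (one checks that $\mathrm{Re}(\ell_i^d)$ and $\mathrm{Re}(i\,\ell_i^d)$ each have real rank $\le 2$ as they are real points on a planar conic's worth of powers). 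Hence $\rrk(f) \le 2 R(d)$ for every real $f$, and in particular every typical real rank is $\le 2R(d)$. Here I would cite the binary-form real rank results of \cite{Ble, CO} and the analysis of \cite{BBO} for the precise passage from a conjugate pair to a sum of two real $d$-th powers.

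Next I would handle the bound $m(d) \leq \binom{d+1}{2} - 2$. This is the subtler half, and I expect it to be the main obstacle. The idea is a dimension/catalecticant argument: a general ternary form of degree $d$ has a catalecticant matrix $C_{u,v}(f)$ for $u = \lceil (d-1)/2 \rceil$, and one shows that a form on the topological boundary of a real-rank stratum must have a catalecticant of submaximal rank, or equivalently must satisfy an apolarity condition forcing it onto a proper subvariety. More concretely: if $r > \binom{d+1}{2} - 2$ were typical, then a general $f$ of that real rank would admit a decomposition whose $r$ points in $\PP^2$ impose independent conditions on a linear system of dimension $\binom{d+1}{2} = \binom{d+1}{2}$ — but forms of degree $d-1$ span a space of dimension $\binom{d+1}{2}$, so $\binom{d+1}{2} - 1$ general points already fail to lie on a common curve of degree $d-1$, and one leverages the Cayley--Bacharach / apolarity structure (via $f^\perp$) to derive a contradiction with the generic dimension count in \eqref{eq:dimVSP}. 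The cleanest route is probably: a typical rank $r$ satisfies $r \le \dim \RR[x,y,z]_{d-1} - 2 = \binom{d+1}{2} - 2$ because any larger set of points would be apolar to a pencil of degree-$(d-1)$ forms, forcing the complementary decomposition to have rank strictly below $R(d)$, which a general $f$ does not. I would make this precise using the structure of $f^\perp$ as in \eqref{eq:keyplayer} and the fact that a general $f$ has $\dim(f^\perp)_{d-1} = \binom{d+1}{2} - \binom{d+1}{2}$... (one computes this Hilbert function exactly).

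Finally I would specialize to $d = 6$: here $R(6) = 10$, so $2R(6) = 20$ and $\binom{7}{2} - 2 = 21 - 2 = 19$, giving $m(6) \le 19$; combined with Theorem~\ref{thm:severi}, which shows $11$ is typical (hence $m(6) \ge 11$), we obtain that the typical real ranks of ternary sextics all lie in $\{10, 11, \dots, 19\}$, and $10$ is typical since a general positive-definite-catalecticant sextic in the Veronese orbitope has real rank $10$. The hardest part, as noted, will be pinning down the $\binom{d+1}{2} - 2$ bound rigorously; I would look to the methods of \cite{BBO} (their Theorem~1.1 and the surrounding apolarity discussion) and to \cite{Ble} for the analogous binary statement, adapting the argument that a point of the real rank boundary lies on a catalecticant-degeneracy locus.
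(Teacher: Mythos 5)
Your proof has a fatal error in the first half and an unfilled gap in the second half, so it does not establish the proposition.

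For the bound $m(d) \leq 2R(d)$, your central claim --- that a conjugate pair $\lambda_i \ell_i^d + \bar\lambda_i \bar\ell_i^d$ has real rank at most $2$ --- is false. Writing $\ell = u + iv$ with $u,v$ real, the form $\ell^d + \bar\ell^d = 2\,\mathrm{Re}\bigl((u+iv)^d\bigr)$ is a real binary form of degree $d$ in the pencil spanned by $u,v$, and its real rank is typically $d$, not $2$. (Concretely, $\mathrm{Re}\bigl((x+iy)^4\bigr) = x^4 - 6x^2y^2 + y^4$ has complex rank $2$ with a conjugate pair of roots, hence real rank $4$ by the binary-form results of \cite{Ble, CO}.) So the proposed decomposition of a real $f$ into conjugate pairs gives no bound better than $d \cdot R(d)$, which is useless. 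The paper's argument for this half is simply to cite \cite[Theorem 3]{BT}, whose proof is of a completely different nature (roughly: perturb $f$ by a generic real form $g$ of real rank $R(d)$ so that $f+g$ is also generic of real rank $R(d)$, then $f = (f+g) - g$).

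For the bound $m(d) \leq \binom{d+1}{2} - 2$, your sketch never reaches a proof: the claim that $\binom{d+1}{2}-1$ general points cannot lie on a degree-$(d-1)$ curve does not by itself contradict anything about a typical-rank decomposition, and the Cayley--Bacharach/apolarity step is asserted rather than carried out. (You also have the vacuous formula $\binom{d+1}{2} - \binom{d+1}{2}$ for $\dim(f^\perp)_{d-1}$, which is a sign the dimension count was not completed.) The paper's route is different and much shorter: it invokes the recursion $m(d) \leq m(d-1) + d$ from \cite[Proposition 6.2]{BBO}, then applies induction with base case $m(3) = 4 = \binom{4}{2}-2$ (the only typical rank of ternary cubics, by Proposition~\ref{prop:generic}). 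This yields exactly $m(d) \leq \binom{d+1}{2}-2$ for $d \geq 3$. If you wish to avoid citing BBO you would need to actually prove that recursion, which you do not.

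Your final specialization to $d=6$ is correct arithmetic ($2R(6) = 20$, $\binom{7}{2}-2 = 19$, minimum is $19$; and $10$ is typical because positive-definite-catalecticant sextics in the interior of the Veronese orbitope have real rank $10$), but it rests on the two unproven bounds above.
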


\begin{proof}
The same argument as in  \cite[Proposition 6.2]{BBO} shows
 $m(d) \leq m(d -1)+d$. The binomial bound follows by induction.
The bound $2R(d)$ comes from \cite[Theorem 3]{BT}.
\end{proof}

The anti-polar construction in
(\ref{eq:antipolar1}) extends to $f$ of degree $d=6$ and $d=8$. We define
$$\BQ(f)(a,b,c) \,\,=\,\,{\rm det}\bigl(C(f+ \ell^d)\bigr) \,-\,
 {\rm det}\bigl(C(f) \bigr) \qquad \text{   for   } \quad \ell=ax+by+cz.$$

\begin{lemma}\label{Blekhermangeneral}
Let $f$ be a general ternary form of degree $d\in\{4,6,8\}$ 
that is not in the cone $P_d^\vee$ spanned by $d$th powers.
  If $\BQ(f)$ has no real zeros than the real rank of $f$ exceeds $R(d)$.
  \end{lemma}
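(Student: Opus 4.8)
The plan is to argue by contraposition. Since $f$ is general and $d\in\{4,6,8\}$, the generic complex rank $R(d)$ equals the size of the square matrix $C(f)$, and $\rrk(f)\ge\crk(f)=R(d)$; hence $\rrk(f)>R(d)$ is equivalent to $\rrk(f)\ne R(d)$, and it suffices to show: if $f=\sum_{i=1}^{r}\lambda_i\ell_i^d$ with $r=R(d)$, $\ell_i\in\RR[x,y,z]_1$ and $\lambda_i\in\RR\setminus\{0\}$, then $\BQ(f)$ has a real zero.

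The heart of the argument is to evaluate $\BQ(f)$ at the linear forms $\ell_k$ of this decomposition. Write $C(\ell_i^d)=v_iv_i^{t}$ for suitable vectors $v_i\in\RR^r$, so that $C(f)=V\Lambda V^{t}$, where $V$ is the matrix with columns $v_1,\dots,v_r$ and $\Lambda={\rm diag}(\lambda_1,\dots,\lambda_r)$. Since $f$ is general, $C(f)$ has full rank, so $D:=(\det V)^2\ne 0$ and $\det C(f)=D\prod_i\lambda_i$. From $C(f+\ell_k^d)=(\lambda_k+1)\,v_kv_k^{t}+\sum_{i\ne k}\lambda_i\,v_iv_i^{t}$ we get $\det C(f+\ell_k^d)=D\,(\lambda_k+1)\prod_{i\ne k}\lambda_i$, and therefore
\[
\BQ(f)(\ell_k)\;=\;\det C(f+\ell_k^d)-\det C(f)\;=\;D\prod_{i\ne k}\lambda_i ,
\]
which is also $v_k^{t}\,{\rm Adj}(C(f))\,v_k$ by the Matrix Determinant Lemma, cf.\ (\ref{eq:antipolar2}).

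Next one produces a sign change. By Proposition~\ref{lem:2468} (equivalently, Sylvester's law of inertia applied to $C(f)=V\Lambda V^{t}$), the signature of $C(f)$ equals $\bigl(\#\{i:\lambda_i>0\},\,\#\{i:\lambda_i<0\}\bigr)$. The hypothesis $f\notin P_d^\vee$ means $C(f)$ is not positive definite, so some $\lambda_{k_-}<0$; and, as $C(f)$ is also not negative definite (equivalently $-f\notin P_d^\vee$), some $\lambda_{k_+}>0$. Then $\prod_{i\ne k_+}\lambda_i$ and $\prod_{i\ne k_-}\lambda_i$ differ by the negative factor $\lambda_{k_-}/\lambda_{k_+}$, so they have opposite signs; by the displayed identity $\BQ(f)(\ell_{k_+})$ and $\BQ(f)(\ell_{k_-})$ have opposite signs. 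Since $\BQ(f)$ is a ternary form of degree $d$, its restriction to the connected sphere $S^2\subset\RR^3$ is a continuous function taking both signs, hence vanishes somewhere; so $\BQ(f)$ has a real zero, which is the asserted contrapositive.

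I expect the only delicate point to be the reduction that $C(f)$ is genuinely indefinite rather than merely "not positive definite": on its own, $f\notin P_d^\vee$ only excludes $C(f)\succ0$, whereas one must also exclude $C(f)\prec0$ (i.e.\ $-f\in P_d^\vee$), since in that case ${\rm Adj}(C(f))$, and therefore $\BQ(f)$, is sign definite and the statement fails. In every application of the lemma in this paper (Theorem~\ref{Algebraic Boundary} and the subsequent quartic examples) the signature of $C(f)$ is $(5,1)$, $(4,2)$ or $(3,3)$, so $\pm f\notin P_d^\vee$ holds automatically; apart from that, the proof is just the short determinant computation above, so no serious difficulty remains.
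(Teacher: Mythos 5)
Your proof follows essentially the same route as the paper's: assume a rank-$R(d)$ real decomposition $f=\sum\lambda_i\ell_i^d$, evaluate $\BQ(f)$ at two of the $\ell_i$ with $\lambda_i$ of opposite sign, and observe a sign change. The only cosmetic difference is that you carry out the determinant computation once and for all, obtaining the clean identity $\BQ(f)(\ell_k)=(\det V)^2\prod_{i\ne k}\lambda_i$, whereas the paper normalizes each $\ell$ so that $C(f\pm\ell^d)$ degenerates and reads off $\BQ(f)(\ell)=\mp\det C(f)$. These are the same argument.

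Your final caveat is the interesting part, and you are right to single it out: the hypothesis $f\notin P_d^\vee$ yields some $\lambda_{k_-}<0$, but to get $\lambda_{k_+}>0$ one also needs $-f\notin P_d^\vee$, i.e.\ that $C(f)$ is not negative definite. This gap is present in the paper's own proof as well: the existence of $\ell'$ with $C(-f+\ell'^d)$ degenerate is asserted, but it requires that $-f$ is also outside the cone of $d$th powers, which is not implied by the stated hypothesis. As you observe, if $g\in P_d^\vee$ is general of rank $R(d)$ then $f=-g$ satisfies $f\notin P_d^\vee$, $\rrk(f)=R(d)$, and $\mathrm{Adj}(C(f))$ is sign definite, so $\BQ(f)$ has no real zeros -- a genuine counterexample to the lemma as literally stated. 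The fix is to strengthen the hypothesis to $\pm f\notin P_d^\vee$ (equivalently $C(f)$ indefinite), which holds in every application of the lemma in the paper. So: correct proof, same method as the paper, and a sharp eye for an imprecision in the published statement.
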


\begin{proof}
Suppose that $f$ is of minimal generic rank $R(d)$. Since $f$ is not a sum of $d$th powers of linear forms
over $\RR$, by Proposition \ref{lem:2468}, there exists a real linear form $\ell = ax + by + cz$ such that the catalecticant matrix of
$ f + \ell^d$  is degenerate; hence $\BQ(f)(\ell)=-{\rm det}\bigl(C(f) \bigr)$. On the other hand, there exists $\ell'$ such that the catalecticant matrix of $-f+\ell'^d$ also drops rank, so $\BQ(f)(\ell')=-\BQ(-f)(\ell')={\rm det}\bigl(C(f) \bigr)$.
Hence the real curve defined by $\BQ(f)$ is non-empty.
\end{proof}

Let $f \in \RR[x,y,z]_6$ be general, with $\crk(f) = R(6) = 10$.
In what follows we derive the algebraic representation of the K3
surface ${\rm VSP}(f)$ and its semialgebraic subset ${\rm SSP}(f)_\RR$.
 The apolar ideal 
$f^\perp$ is generated by nine quartics. The minimal free
resolution of $f^\perp$ equals $$
S \rightarrow S(-5)^9 \stackrel{A}{\longrightarrow}  S(-4)^9 \rightarrow S \rightarrow 0.
$$
Here $A$ is a skew-symmetric  $9 {\times} 9$-matrix with linear entries.
Its $8 {\times} 8$ subpfaffians generate  $f^{\perp}$. We write $A = A_1 x {+} A_2 y {+} A_3 z $
where $A_i \in
\bigwedge^2 f^\perp_4 \simeq \bigwedge^2 \RR^9$.
The variety ${\rm VSP}(f)$ is a K3 surface
of genus~$20$ and degree $38$. See \cite{Muk92b, RS}
for  details and proofs. The following representation, found 
in \cite[Theorem 1.7 (iii)]{RS}, is analogous to 
Proposition \ref{prop:Mukai35} and Lemma~\ref{prop:Mukai47}.

\begin{proposition}
The surface  ${\rm VSP}(f)$ is the intersection
of the Grassmannian ${\rm Gr}(5,9)$, in its Pl\"ucker 
embedding in  $\PP(\bigwedge^5 f^\perp_4) \simeq \PP^{125}$,
with the $20$-dimensional linear subspace
$$ \PP^{20}_A \,\,= \,\, \bigl\{\,
U \in \PP^{125} \,:\, U \wedge A_1 = U \wedge A_2 = U \wedge A_3 = 0\, \bigr\}.
$$
Inside this space, ${\rm VSP}(f)$ is cut out by
$153$ quadrics, obtained from the Pl\"ucker relations.
\end{proposition}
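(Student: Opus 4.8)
The plan is to mirror the arguments behind Proposition~\ref{prop:Mukai35} for quadrics and Lemma~\ref{prop:Mukai47} for quartics, now with the alternating $9\times 9$ matrix $A=A_1x+A_2y+A_3z$ described just above the Proposition. The homological side is already in place: by the Buchsbaum--Eisenbud Structure Theorem the nine quartic generators of $f^\perp$ are the $8\times 8$ sub-Pfaffians of $A$, and $A_i\in\bigwedge^2 f^\perp_4\simeq\bigwedge^2\RR^9$. What remains is to (i) embed ${\rm VSP}(f)$ into the Grassmannian ${\rm Gr}(5,9)$ in its Pl\"ucker $\PP^{125}$, (ii) prove the block-form lemma cutting it out by the three linear conditions $U\wedge A_i=0$, and (iii) count the Pl\"ucker quadrics that survive the restriction to $\PP^{20}_A$. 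Steps (i) and (ii) are \cite[Theorem 1.7 (iii)]{RS}, going back to Mukai~\cite{Muk92b}.

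For (i): a configuration $\Gamma$ of $10$ points of $\PP^2$ appearing in a rank $10$ decomposition $f=\sum_{i=1}^{10}\lambda_i\ell_i^6$ has $I_\Gamma\subset f^\perp$ by apolarity, and for a general such $\Gamma$ the minimal free resolution $0\to S(-5)^4\to S(-4)^5\to I_\Gamma\to 0$ shows that $(I_\Gamma)_4$ is a $5$-dimensional subspace $V$ of $f^\perp_4$, with $\Gamma$ equal to the common zeros of the $4\times 4$ minors of the $5\times 4$ Hilbert--Burch matrix of $I_\Gamma$. Sending $\Gamma$ to $\bigwedge^5(I_\Gamma)_4$ therefore embeds ${\rm VSP}(f)$ into ${\rm Gr}(5,f^\perp_4)={\rm Gr}(5,9)\subset\PP(\bigwedge^5 f^\perp_4)=\PP^{125}$.

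For (ii), the key point is the exact analogue of (\ref{eq:specialA55}): a decomposable $U=u_1\wedge\cdots\wedge u_5$ spanning $V\subset f^\perp_4$ equals $(I_\Gamma)_4$ for a $10$-point scheme realizing a decomposition of $f$ if and only if $U\wedge A_i=0$ in $\bigwedge^7 f^\perp_4$ for $i=1,2,3$. Working in a basis of $f^\perp_4$ whose first five vectors span $V$, the condition $U\wedge A_i=0$ says exactly that the lower-right $4\times 4$ block of $A_i$ vanishes, so $A$ takes the form $\bigl(\begin{smallmatrix}\star&T\\-T^t&0\end{smallmatrix}\bigr)$ with $T=xT_1+yT_2+zT_3$ a $5\times 4$ matrix of linear forms; its five $4\times 4$ minors are five of the $8\times 8$ sub-Pfaffians of $A$, hence lie in $f^\perp_4$, and by the apolarity correspondence recalled after (\ref{eq:keyplayer}) they define the required $\Gamma$, while conversely any $V$ arising from a decomposition produces this block form. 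Hence ${\rm VSP}(f)={\rm Gr}(5,9)\cap\PP^{20}_A$. That this intersection is a smooth K3 surface of degree $38$ and genus $20$ --- equivalently, that for general $f$ the space $\PP^{20}_A$ has the expected dimension $20$ and meets ${\rm Gr}(5,9)$ properly despite $20+20-125<0$ --- is the content of Mukai~\cite{Muk92b} and \cite[Theorem 1.7 (iii)]{RS}, which I would invoke rather than reprove. For (iii), reducing the Pl\"ucker ideal of ${\rm Gr}(5,9)\subset\PP^{125}$ modulo the $105$ independent linear forms defining $\PP^{20}_A$ leaves $153$ quadrics; I would confirm this $153$, and the dimension $20$, by a direct computation, exactly as the numbers $22$ and $45$ were verified in Lemma~\ref{prop:Mukai47}.

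The main obstacle is not the bookkeeping but the geometric input in step (ii): one must know beforehand that, for general $f$, the overdetermined system $U\wedge A_i=0$ has rank exactly $105$ and that the resulting linear section of ${\rm Gr}(5,9)$ is an irreducible smooth $K3$ surface of the correct numerical type. This cannot be reduced to a single example; it relies on Mukai's classification of $K3$ surfaces as linear sections of homogeneous spaces and on the Ranestad--Schreyer study of $V_{38}$. Once that is granted, the passage from $\PP^{125}$ to $\PP^{20}_A$ and the count $153$ are routine, and the block-form lemma is a verbatim transcription of the proofs of Proposition~\ref{prop:Mukai35} and Lemma~\ref{prop:Mukai47}.
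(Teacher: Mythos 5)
Your proposal takes essentially the same route as the paper: the paper states the result without a written proof, citing \cite[Theorem 1.7(iii)]{RS} and Mukai~\cite{Muk92b} and noting the analogy with Proposition~\ref{prop:Mukai35} and Lemma~\ref{prop:Mukai47}, which is exactly the logic you spell out (Hilbert--Burch for ten general points giving $(I_\Gamma)_4$ of dimension $5$, the block form $\bigl(\begin{smallmatrix}\star&T\\ -T^t&0\end{smallmatrix}\bigr)$ of the $9\times9$ Pfaffian matrix encoding $U\wedge A_i=0$, the deeper smoothness/dimension statement deferred to Ranestad--Schreyer and Mukai, and the counts $20$ and $153$ checked by direct computation as in Lemma~\ref{prop:Mukai47}). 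The one small point worth flagging: the three conditions $U\wedge A_i=0$ each live in $\bigwedge^7 f^\perp_4\simeq\RR^{36}$ and hence give $108$ linear equations on $\PP^{125}$, so your statement that they cut out a $\PP^{20}$ requires these $108$ equations to have rank exactly $105$; you assert $105$ independent linear forms, which is correct for general $f$ but is itself part of what the cited computation or \cite{RS} must supply rather than a free count.
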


For any sextic $f $, we can compute
the surface ${\rm VSP}(f)$ explicitly, by the method explained
for quadrics in Example \ref{ex:x^2+y^2-z^2}.
Namely, as in (\ref{eq:affinecoord}),
we introduce local coordinates on ${\rm Gr}(5,9)$.
The equations defining $\PP^{20}_A$ translate into
quadrics in the $20$ local coordinates.
In analogy to (\ref{eq:wenowtransform}),  we transform the
$9 \times 9$-matrix $A$ into
\begin{small}$ \begin{pmatrix}   \star  & T \,\, \\       -T^t & 0 \,\, \end{pmatrix}$\end{small}.
Here, $T$ is a $5 \times 4$-matrix of linear forms 
whose $4 \times 4$ minors define the ten points in $\PP^2$ in
 the representation $f = \sum_{i=1}^{10} \ell_i^6$.
We can study ${\rm SSP}(f)_\RR$  and its boundary
inside the real K3 surface ${\rm VSP}(f)_\RR$ by means of the
hyperdeterminant for $m=4$ in Corollary \ref{cor:schlafli}.
The following example demonstrates this.
\\
\begin{example}  \rm
Consider  the sextic ternary $\,f=\sum_{i+j+k=3} (ix+jy+kz)^6$, where $i,j,k\in\ZZ_{\geq 0}$.
It is is given by a real rank $10$ representation.
 Consider the $9\times 9$ matrix of linear forms in some minimal  free resolution of
 the apolar ideal $f^{\perp}$.
 We transform this matrix into
  $$
A \,\, = \,\,
\begin{bmatrix}
 0 & l_1 & l_2 & l_3 & l_4 & l_5 & l_6 & l_7 & l_8 \\
-l_1 & 0 & l_9 & l_{10} & l_{11} & l_{12}  & l_{13} & l_{14} & l_{15}\\
-l_2 & -l_9 & 0 & l_{16} & l_{17} & l_{18} & l_{19} & l_{20} & l_{21}\\
-l_3 & -l_{10} & -l_{16} & 0 & l_{22} & l_{23} & l_{24} & l_{25} & l_{26}\\
-l_4& -l_{11} & -l_{17} & -l_{22} & 0 & l_{27} & l_{28} & l_{29} & l_{30} \\
-l_5 & -l_{12} & -l_{18} & -l_{23} & -l_{27} & 0 & 0 & 0 & 0\\
-l_6 & -l_{13} & -l_{19} & -l_{24} & -l_{28} & 0 & 0 & 0 & 0\\
-l_7& -l_{14} & -l_{20}& -l_{25} & -l_{29} & 0 & 0 & 0& 0\\
-l_8 & -l_{15} & -l_{21} & -l_{26} & -l_{30} & 0 & 0 & 0 & 0 
\end{bmatrix},
$$
\noindent where \begin{small} 
$\,l_1= \frac{6885}{631}x-\frac{4050}{631}y-\frac{175770}{631}z,\, l_2=-\frac{4050}{631}x+\frac{3240}{631}y+\frac{37665}{631}z\, l_3=-\frac{810}{631}z, \,l_4=\frac{324}{631}x+\frac{810}{631}y+\frac{2025}{631}z,\, l_5= -\frac{5}{631}x , \,l_6=\frac{21}{631}x, \,
 l_7= -\frac{4}{631}x, \,
  l_8= 0, \,
  l_9= -\frac{67230}{631}x-\frac{67230}{631}y-\frac{2791260}{631}z , \,
  l_{10}= \frac{3240}{631}x-\frac{4050}{631}y+\frac{37665}{631}z, \,
  l_{11}= -\frac{55728}{631}x-\frac{70308}{631}y-\frac{13446}{631}z,\,
   l_{12}= \frac{25}{631}x-\frac{25}{631}z,\,
    l_{13}= -\frac{1482}{631}x+\frac{818}{631}y+\frac{251}{631}z,\,
     l_{14}=\frac{668}{631}x-\frac{852}{631}y+z,\,
      l_{15}= \frac{10}{631}y-\frac{20}{631}z,\,
       l_{16}=-\frac{4050}{631}x+\frac{6885}{631}y-\frac{175770}{631}z, \,
       l_{17}=\frac{70308}{631}x+\frac{55728}{631}y+\frac{13446}{631}z , \,
       l_{18}=-\frac{10}{631}x+\frac{20}{631}z, \,
       l_{19}=\frac{852}{631}x-\frac{668}{631}y-z,\,
        l_{20}=-\frac{818}{631}x+\frac{1482}{631}y-\frac{251}{631}z, \,
        l_{21}=-\frac{25}{631}y+\frac{25}{631}z, \,
        l_{22}= -\frac{810}{631}x-\frac{324}{631}y-\frac{2025}{631}z,\, 
        l_{23}=0,\,
         l_{24}= \frac{4}{631}y  , \,
         l_{25}=-\frac{21}{631}y,\,
          l_{26}=\frac{5}{631}y , \,
          l_{27}= \frac{5}{631}z ,\,
            l_{28}=   -\frac{430}{631}z,\,
              l_{29}=  \frac{430}{631}z,\,
           l_{30}=  -\frac{5}{631}z $. \end{small} \noindent The upper right $5 \times 4$-submatrix of $A$
drops rank precisely on the 
ten points $(i:j:k) \in \PP^2$
where $i+j+k=3$ in nonnegative integers.

We introduce local coordinates on ${\rm Gr}(5,9)$ as follows. Let $U$ be the row span of $\begin{pmatrix} \,{\rm Id}_5 \!& \! V \end{pmatrix},$ where $ V=(v_{ij}) $  is a $5\times 4$ matrix of unknowns. We transform $A$ into the coordinate system given by
$U$ and its orthogonal complement:
$$
\begin{pmatrix}   \star & T \,\, \\       -T^t & 0\,\,
\end{pmatrix} \quad = \quad 
\begin{pmatrix}\, {\rm Id}_5 & V \ \\
                          \,      V^t &  \! - {\rm Id}_4 
  \end{pmatrix} \cdot
A \cdot \begin{pmatrix} \,{\rm Id}_5 & V\ \\
                           \,     V^t &  \! - {\rm Id}_4 
  \end{pmatrix}.
$$

We proceed as in Example \ref{ex:x^2+y^2-z^2}.
The lower  right $4\times 4$ block is zero whenever the corresponding $18$ quadrics in the $20$ local coordinates vanish. The submatrix $T$ is a $3\times 4\times 5$ tensor. Its hyperdeterminant ${\rm Det}(T)$ is
the discriminant for our problem. By Corollary~\ref{cor:schlafli},
this is a polynomial of degree $120$ in the entries of $T$.
The specialization of ${\rm Det}(T) $ to the $20$
local coordinates has degree $\leq 240$.
That  polynomial represents the algebraic boundary of 
${\rm SSP(f)}_\RR$ inside ${\rm VSP}(f)_\RR$,
similarly to Corollary~\ref{cor:inaffine}.
\hfill $ \diamondsuit$
\end{example}
\smallskip

In the paper we focused on general ternary forms. 
Special cases are also very interesting:

\begin{example} \rm  
Consider the monomial  $f = x^2y^2z^2$. 
By \cite{COV}, this has $\rrk(f) \leq 13$ because
$$
\begin{matrix} 360 f & =  & 
4 (x^6+y^6+z^6) + 
 (x+y+z)^6 + (x+y-z)^6 + (x-y+z)^6 + (x-y-z)^6 \smallskip \\ & & 
 -2 \bigl[ (x+y)^6 +(x-y)^6 +(x+z)^6+(x-z)^6 + (y+z)^6 +(y-z)^6 \bigr] .
\end{matrix}
$$
The apolar ideal is $f^\perp = \langle x^3, y^3, z^3 \rangle$. 
The radical ideal generated by general cubics in $f^\perp$,
\begin{equation}
\label{eq:twocubics}
C_1\,=\,\alpha x^3+ \beta y^3 + \gamma z^3 \quad {\rm and} \quad
C_2\,=\,\alpha' x^3+\beta' y^3 +\gamma' z^3,
\end{equation}
proves that $\,\crk(f)=9$. We can replace
$C_1$ and $C_2$ by two linear combinations
that are binomials, say $C'_1 = x^3  + \delta z^3$
and $C'_2 = y^3 + \epsilon z^3$.
For any choice of $\delta$ and $\epsilon$, at most three of the
nine points  of $V(C'_1 , C'_2 ) \subset \PP^2$ are real.
 This shows $\,\rrk(f)\geq 10$. 
 
We next prove  $\rrk(f) \geq 11$.  Assume that
$p_1,p_2,\ldots,p_{10} \in \PP^2_\RR $
give a rank $10$ decomposition  of $f $.
Consider a pencil of cubics $C_1+ t\cdot C_2$ passing through $p_1,\ldots, p_8$. 
We may assume $p_{10} \not\in V(C_1,C_2)$ and $C_2(p_9)=0$. 
 We claim that $C_2$ vanishes also at $p_{10}$.
 Indeed, $C_2$ acts by 
 differentiation and gives $C_2 (f)= \lambda_{10} C_2(p_{10}) \ell_{10}^3$
where $\lambda_{10} \in \RR^*$ and $C_2(p_{10})$ is the evaluation at $p_{10}$. 
However, $C_2(f) = C_2(x^2 y^2 z^2)$ contains none of 
the pure powers $x^3, y^3,z^3$ and so $C_2$ passes through $p_{10}$ as well. We now know that $C_1$ vanishes at neither
$p_9$ nor $p_{10}$. Differentiation gives
$$
C_1(f)\,\,=\,\,\alpha \ell_9^3 \,+\, \beta \ell_{10}^3, 
\qquad \hbox{where
 $\alpha=\lambda_9 C_1(p_9), \beta=\lambda_{10} C_1(p_{10})\,$ and $
 \,\lambda_9, \lambda_{10} \in \RR^*$.}
$$
Let $\ell_9=a_9x+b_9y+c_9z$ and $\ell_{10}=a_{10}x+b_{10}y+c_{10}z$.
The coefficients of $x^3,y^3,z^3$ in the cubic $C_1(f)$ vanish. Hence 
$$
\alpha a_9^3 +\beta a_{10}^3 \,=\,
\alpha b_9^3 + \beta b_{10}^3\,=\,
\alpha c_9^3 + \beta c_{10}^3\,=\,0.
$$
Over $\mathbb R$, these equations imply 
$a_9=-(\beta/\alpha)^{1/3}a_{10}$, $b_9=-(\beta/\alpha)^{1/3}b_{10}$, $c_9=-(\beta/\alpha)^{1/3}c_{10}$. So, $p_9$ and $p_{10}$ are the same point in $\mathbb P^2$. This is a contradition and we conclude $\rrk(f) \geq 11$. 

At present we do not know whether the real rank of $f$ is $11, 12$ or $13$.
The argument above can be extended to establish the following result:
{\em if $\rrk(f) \leq 12$ then there exists a decomposition {\rm (\ref{eq:waring})}
whose points $(a_i:b_i:c_i)$ all lie  on the Fermat cubic $\,V(x^3+y^3+z^3)$.}

Let us explore the possibility 
 $\rrk(f) = 12$. Then it is likely that the
$(a_i:b_i:c_i)$ are  the complete intersection
of a cubic and a quartic. They can be  assumed to have the form:
\begin{equation}
\label{eq:realrootclassification}
x^3 + y^3 + 1  \,\,=  \,\,ax^4 + bx^3y + cx^3 + dx + ey + 1  \,\,=\,\,  0.
\end{equation}
Hence, determining the real rank of $f$ leads directly  to the following
easy-to-state  question: \\
{\em Can we find real constants $a,b,c,d,e$ such that all
$12$ solutions to the equations {\rm (\ref{eq:realrootclassification})} are real?}
If the answer is ``yes'' then we can conclude  $\rrk(f) \leq 12$.
Otherwise, we cannot reach a conclusion. A systematic approach 
to this Real Root Classification problem is via the
discriminant of the system  (\ref{eq:realrootclassification}).
This discriminant is a polynomial of degree $24$ in $a,b,c,d,e$.
We would need to explore the connected components of the complement
of this hypersurface in $\RR^5$. For further reading on the rank geometry of
monomials we refer to \cite{BBT, CCG}.
\hfill $\diamondsuit$
\end{example}

\medskip

\begin{small}

\noindent
{\bf Acknowledgements.}\smallskip \\
We are grateful to  Greg Blekherman  for his
help with this project. We also thank
 Giorgio Ottaviani and Frank-Olaf Schreyer for valuable discussions.
  Bernd Sturmfels was supported by the
  US National Science Foundation (DMS-1419018)
  and the Einstein Foundation Berlin. Mateusz Micha{\l}ek is a PRIME DAAD fellow and acknowledges the support of Iuventus Plus grant 0301/IP3/2015/73 of the Polish Ministry of Science.

\end{small}

\medskip

\begin{small}

\end{small}

\bigskip

\noindent
\footnotesize {\bf Authors' addresses:}

\smallskip

\noindent Mateusz Micha{\l}ek,
Freie Universit\"at Berlin, Germany;
Polish Academy of Sciences, Warsaw, Poland,
{\tt wajcha2@poczta.onet.pl}

\smallskip

\noindent Hyunsuk Moon,
  KAIST,
     Daejeon, South Korea, {\tt octopus14@kaist.ac.kr}

\smallskip

\noindent Bernd Sturmfels, 
University of California, Berkeley, USA,
{\tt bernd@berkeley.edu}

\smallskip

\noindent Emanuele Ventura,
Aalto University,  Finland,
{\tt emanuele.ventura@aalto.fi}

\end{document}